\renewcommand{\setminus}{{\smallsetminus}}
\newcommand{\cp}[1]{\vcenter{\hbox{#1}}}
\newtheorem{theorem}{Theorem}[section]
\newtheorem{lemma}[theorem]{Lemma}
\newtheorem{proposition}[theorem]{Proposition}
\newtheorem{definition}[theorem]{Definition}
\newtheorem{corollary}[theorem]{Corollary}
\newtheorem{conjecture}[theorem]{Conjecture}
\theoremstyle{remark}
\newtheorem{remark}[theorem]{Remark}
\theoremstyle{remark}
\numberwithin{equation}{section}
\begin{document}
\title{\bf Relative Reshetikhin-Turaev invariants, hyperbolic cone metrics and discrete Fourier transforms II}

\author{Ka Ho Wong and Tian Yang}
\date{}
\maketitle

\begin{abstract} We prove the Volume Conjecture for the relative  Reshetikhin-Turaev invariants proposed in \cite{WY2} for all pairs $(M,K)$ such that $M\setminus K$ is homeomorphic to the complement of the figure-$8$ knot in $S^3$ with almost all possible cone angles.
\end{abstract}

\section{Introduction}

In the first paper \cite{WY2} of this series, we proposed the Volume Conjecture for the relative  Reshetikhin-Turaev invariants of a closed oriented $3$-manifold with a colored framed link inside it whose asymptotic behavior is related to the volume and the Chern-Simons invariant of the hyperbolic cone metric on the manifold with singular locus the link and cone angles determined by the coloring. 

\begin{conjecture}[\cite{WY2}]\label{VC} Let $M$ be a closed oriented $3$-manifold and let $L$ be a framed hyperbolic link in $M$ with $n$ components. For an odd integer $r\geqslant 3,$ let  $\mathbf m=(m_1,\dots,m_n)$ and let $\mathrm{RT}_r(M,L,\mathbf m)$ be the $r$-th relative Reshetikhin-Turaev invariant of $M$ with $L$ colored by $\mathbf m$ and evaluated at the root of unity $q=e^{\frac{2\pi\sqrt{-1}}{r}}.$ 
For a sequence $\mathbf m^{(r)}=(m^{(r)}_1,\dots,m^{(r)}_n),$ let 
$$\theta_k=\bigg|2\pi- \lim_{r\to\infty}\frac{4\pi m^{(r)}_k}{r}\bigg|$$
and let $\theta=(\theta_1,\dots, \theta_n).$
If $M_{L_\theta}$ is a hyperbolic cone manifold consisting of $M$ and a hyperbolic cone metric on $M$ with singular locus $L$ and cone angles $\theta,$ then 
$$\lim_{r\to\infty} \frac{4\pi }{r}\log \mathrm{RT}_r(M,L,\mathbf m^{(r)})=\mathrm{Vol}(M_{L_\theta})+\sqrt{-1}\mathrm{CS}(M_{L_\theta})\quad\quad\text{mod } \sqrt{-1}\pi^2\mathbb Z,$$
where $r$ varies over all positive odd integers.
\end{conjecture}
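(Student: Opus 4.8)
The plan is to adapt the method used for the fundamental shadow link case in \cite{WY2}: write $\mathrm{RT}_r(M,L,\mathbf m^{(r)})$ as a finite state sum, turn that sum into an oscillatory integral by a discrete Fourier transform and the Poisson summation formula, extract the leading asymptotics by the saddle point method, and identify the dominant critical value with $\mathrm{Vol}(M_{L_\theta})+\sqrt{-1}\,\mathrm{CS}(M_{L_\theta})$ via the Neumann--Zagier data of an ideal triangulation of $M\setminus L$.

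First I would reduce to fundamental shadow links. Since $M\setminus L$ is hyperbolic, by Costantino--Thurston it is a Dehn filling of some cusps of the complement of a fundamental shadow link $\mathcal F$ in a connected sum $N$ of copies of $S^2\times S^1$; hence $(M,L)$ is obtained from $(N,\mathcal F)$ by integral surgery along a framed sublink $\mathcal F'\subset\mathcal F$, with the remaining components of $\mathcal F$ being $L$. By the Reshetikhin--Turaev surgery formula there is an explicit scalar $\kappa_r$ with
$$\mathrm{RT}_r(M,L,\mathbf m)=\kappa_r\sum_{\mathbf k} H_r(\mathbf k)\,\mathrm{RT}_r\big(N,\mathcal F,(\mathbf m,\mathbf k)\big),$$
the sum running over admissible colorings $\mathbf k$ of $\mathcal F'$ and $H_r(\mathbf k)$ packaging the $S$-matrix entries and framing twist coefficients dictated by the surgery; this is literally a discrete Fourier transform in $\mathbf k$, which is where the title enters. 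For fundamental shadow links $\mathrm{RT}_r(N,\mathcal F,(\mathbf m,\mathbf k))$ is a product of quantum $6j$-symbols, whose asymptotic expansion --- the engine of the proof of the conjecture for fundamental shadow links in \cite{WY2} --- equals, up to subexponential terms, $\exp\!\big(\tfrac r{4\pi}\Phi_0(\mathbf m,\mathbf k)\big)$ for an explicit function $\Phi_0$ built from the quantum dilogarithm.

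Next I would pass from the sum to an integral by Poisson summation, rewriting the $\mathbf k$-sum as $\int \exp\!\big(\tfrac r{4\pi}\Phi(\mathbf m,\mathbf z)\big)\,d\mathbf z$ over a multidimensional contour, plus a series of terms indexed by the nonzero dual lattice which must be shown to be exponentially negligible. The phase $\Phi$ is a sum of dilogarithms, one per tetrahedron of the triangulation of the fundamental shadow link complement, together with the surgery contributions from $H_r$; a Lagrange multiplier computation then identifies the critical point system $\nabla_{\mathbf z}\Phi=0$ with Thurston's gluing equations for the induced ideal triangulation of $M\setminus L$, augmented by the equations prescribing cone angle $\theta_k$ along each component $L_k$, the relation $\theta_k=\big|2\pi-\lim_{r\to\infty}\tfrac{4\pi m^{(r)}_k}{r}\big|$ being exactly what matches the angle parameters carried by the colors.

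The hard part will be the saddle point analysis. Hyperbolic Dehn surgery theory for cone manifolds --- Neumann--Zagier, Yoshida, and the rigidity of Hodgson--Kerckhoff --- converts the hypothesis that $M_{L_\theta}$ is a hyperbolic cone manifold with cone angles $\theta$ into a geometric solution $\mathbf z_\theta$ of the critical point equations; the task is then to show that (i) the integration cycle can be deformed through $\mathbf z_\theta$, (ii) $\mathrm{Re}\,\Phi$ attains its strict maximum on the deformed cycle at $\mathbf z_\theta$, with nondegenerate Hessian there, and (iii) all Poisson error terms and the contributions of the other critical points of $\Phi$ are of strictly smaller exponential order. This is genuinely delicate: for a general pair $(M,L)$ there is no a priori bound on the number of complex critical points of $\Phi$, nor any global control of $\mathrm{Re}\,\Phi$ along the cycle, which is exactly why the present paper restricts to pairs with $M\setminus L$ the figure-$8$ knot complement --- there the triangulation has two tetrahedra, $\Phi$ is completely explicit, and the domination can be verified by hand, the ``almost all cone angles'' caveat reflecting the sub-locus of $\theta$ on which the geometric saddle stops being dominant or collides with a competitor. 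Granting (i)--(iii), the saddle point formula gives
$$\lim_{r\to\infty}\frac{4\pi}{r}\log\mathrm{RT}_r(M,L,\mathbf m^{(r)})=\Phi(\mathbf m,\mathbf z_\theta),$$
and a computation with the Neumann--Zagier potential --- relating the critical value of the dilogarithm sum to the complex volume of the associated $\mathrm{PSL}(2,\mathbb C)$-character, and invoking the Schl\"afli formula for its dependence on the cone angles --- identifies $\Phi(\mathbf m,\mathbf z_\theta)$ with $\mathrm{Vol}(M_{L_\theta})+\sqrt{-1}\,\mathrm{CS}(M_{L_\theta})$ modulo $\sqrt{-1}\pi^2\mathbb Z$, which is the assertion.
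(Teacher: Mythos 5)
The statement you are addressing is a \emph{conjecture}: the paper does not prove it in general, and neither do you. Your text is a program, not a proof, and the gap is exactly where you write ``Granting (i)--(iii)'': the deformation of the integration cycle through the geometric saddle, the strict domination of $\mathrm{Re}\,\Phi$ at that saddle on the deformed cycle, and the exponential smallness of the nonzero Poisson/Fourier terms are the entire analytic content of the conjecture, and none of it is established (or known to be establishable) for a general hyperbolic pair $(M,L)$. You yourself note that there is no a priori control on the competing complex critical points of $\Phi$; that admission means the argument cannot close. A second unproven ingredient is your claim that the relative invariant of a fundamental shadow link equals, up to subexponential factors, $\exp\big(\tfrac r{4\pi}\Phi_0\big)$ for \emph{all} colorings arising after surgery: the asymptotics of quantum $6j$-symbols and the result of \cite{WY2} are only available for colors corresponding to sufficiently small cone angles, whereas the surgery sum ranges over all admissible colors, so the very first reduction already outruns what is known.

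It is also worth noting that even for the special case the paper does prove (Theorem \ref{main}, $M\setminus K$ the figure-$8$ knot complement), the route is different from yours: the paper does not pass through fundamental shadow links and $6j$-symbols, but starts from the Kirby chain diagram given by the continued fraction expansion of $p/q$ and Habiro's formula for the colored Jones polynomials of $K_{4_1}$ (Proposition \ref{formula}), then applies Poisson summation, identifies the critical point equations with the gluing and $\frac pq$ Dehn-filling equations for the two-tetrahedron triangulation (Lemma \ref{=}, Proposition \ref{Vol}), and verifies the saddle-point domination by hand using the concavity of $\mathrm{Im}V^\pm$ (Lemma \ref{convexity}, Proposition \ref{max}) together with the hypothesis $\mathrm{Vol}(M_{K_\theta})>\tfrac12\mathrm{Vol}(S^3\setminus K_{4_1})$, which is what controls the error terms in Proposition \ref{bound}. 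Your outline correctly identifies the general architecture (state sum, Poisson summation, saddle point, Neumann--Zagier identification of the critical value), but as a proof of Conjecture \ref{VC} it assumes precisely the steps that remain open.
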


In the same paper \cite{WY2}, we also proved Conjecture \ref{VC} in the case that the ambient $3$-manifold is obtained by doing an integral surgery along some components of a fundamental shadow link and the complement of the link in the ambient manifold is homeomorphic to the fundamental shadow link complement, for sufficiently small cone angles. A result of Costantino and Thurston shows that all the closed oriented $3$-manifolds  can be obtained by doing an integral Dehn filling along a suitable fundamental shadow link complement. On the other hand, it is expected that hyperbolic cone metrics interpolate the complete cusped hyperbolic metric on the $3$-manifold with toroidal boundary and the smooth hyperbolic metric on the Dehn-filled $3$-manifold, corresponding to the colors running from $\frac{r-1}{2}$ to $0$ or $r-2.$ Therefore, if one can push the cone angles in Theorem \ref{main} from sufficiently small all the way up to $2\pi,$ then one proves the Volume Conjecture for the Reshetikhin-Turaev invariants of closed oriented hyperbolic $3$-manifolds proposed by Chen and the second author\,\cite{CY}. This thus suggests a possible approach of solving Chen-Yang's Volume Conjecture.  

The main result  of this paper proves Conjecture \ref{VC} for all pairs $(M,K)$ such that $M\setminus K$ is homeomorphic to the figure-$8$ knot complement in $S^3$ with almost all possible cone angles, showing the plausibility of this new approach.

\begin{theorem}\label{main} Conjecture \ref{VC} is true for all pairs $(M,L)$ such that $M\setminus K$ is homeomorphic to the figure-$8$ knot complement $S^3\setminus K_{4_1}$ and for all cone angles $\theta$ such that
$$\mathrm{Vol}(M_{K_\theta})>\frac{\mathrm{Vol}(S^3 \setminus K_{4_1})}{2}.$$
 \end{theorem}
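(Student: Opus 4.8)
The plan is to compute the asymptotic expansion of $\mathrm{RT}_r(M,K,\mathbf m^{(r)})$ directly from an explicit state-integral formula. Since $M\setminus K$ is the figure-$8$ knot complement, by Costantino–Thurston the pair $(M,K)$ is obtained by an integral Dehn filling on (a component of) a fundamental shadow link whose complement is $S^3\setminus K_{4_1}$; hence the relative RT invariant can be written, via the discrete Fourier transform / Poisson summation machinery developed in the first paper \cite{WY2}, as a sum over a lattice of values of a holomorphic ``quantum dilogarithm'' potential function $\Phi_r$, possibly with a Gaussian factor coming from the surgery coefficient. First I would recall from \cite{WY2} the precise finite-sum formula for $\mathrm{RT}_r(M,K,\mathbf m^{(r)})$ and its rewriting, after applying the discrete Fourier transform, as an integral (or a sum approximating an integral) of $e^{\frac{r}{4\pi}V_r(\mathbf z,\mathbf n)}$ over a suitable domain, where $V_r$ limits to a potential function $V$ whose critical value, by the standard Neumann–Zagier / Yokota-type argument, equals $\mathrm{Vol}(M_{K_\theta})+\sqrt{-1}\,\mathrm{CS}(M_{K_\theta})$ modulo $\sqrt{-1}\pi^2\mathbb Z$.

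The key steps, in order, are: (1) write down the state-sum/state-integral formula and identify the potential function $V(\mathbf z)$ and its dependence on the cone-angle parameter $\theta$ (through the boundary condition $\theta_k=|2\pi-\lim 4\pi m_k^{(r)}/r|$); (2) locate the critical point $\mathbf z_\theta$ of $V$ and verify, via the gluing-equation/Neumann–Zagier correspondence, that it corresponds to the holonomy of the hyperbolic cone metric $M_{K_\theta}$ and that $\mathrm{Re}\,V(\mathbf z_\theta)=2\,\mathrm{Vol}(M_{K_\theta})$ in the normalization where the relevant contribution is $\tfrac12 V(\mathbf z_\theta)$; (3) estimate the contributions of all the other terms/critical points of the sum and show they are dominated by $e^{\frac{r}{4\pi}\cdot 2\,\mathrm{Vol}(M_{K_\theta})}$ — this is exactly where the hypothesis $\mathrm{Vol}(M_{K_\theta})>\tfrac12\mathrm{Vol}(S^3\setminus K_{4_1})$ enters: the ``trivial'' or boundary contributions are controlled by $e^{\frac{r}{4\pi}\,\mathrm{Vol}(S^3\setminus K_{4_1})}$ (the maximum of the real part of the figure-$8$ potential over the whole summation range), so the desired term wins precisely when $2\,\mathrm{Vol}(M_{K_\theta})>\mathrm{Vol}(S^3\setminus K_{4_1})$; (4) apply a saddle-point/Laplace analysis at $\mathbf z_\theta$ (using non-degeneracy of the Hessian, which follows from the non-degeneracy of the hyperbolic cone structure à la Hodgson–Kerckhoff) to conclude that $\frac{4\pi}{r}\log\mathrm{RT}_r\to V(\mathbf z_\theta)/?$ equals $\mathrm{Vol}+\sqrt{-1}\,\mathrm{CS}$ mod $\sqrt{-1}\pi^2\mathbb Z$; (5) handle the Chern–Simons/phase bookkeeping modulo $\sqrt{-1}\pi^2\mathbb Z$, tracking the framing and surgery contributions.

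The main obstacle I expect is step (3): showing that no ``unwanted'' critical point or boundary stratum of the state sum — in particular the ones responsible for the $\mathrm{Vol}(S^3\setminus K_{4_1})$ growth rate of the unfilled invariant — dominates the geometric term. This requires a careful, uniform estimate of the quantum dilogarithm and its error terms over the full (compact but $r$-dependent) summation region, together with an argument that the competing real parts are strictly smaller, which only holds under the stated volume inequality; pushing past it to all cone angles would presumably need a finer understanding of those subdominant terms. A secondary technical point is verifying that the critical point $\mathbf z_\theta$ we need actually lies in the region where the asymptotic expansion of the quantum dilogarithm is valid and is the unique maximizer there — i.e., ruling out that the relevant saddle is ``hidden'' behind a Stokes line — and that the Hessian is genuinely nonsingular for all $\theta$ in the allowed range, which I would deduce from the smoothness and local rigidity of the one-parameter family of cone structures on the figure-$8$ complement.
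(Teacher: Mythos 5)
Your plan is essentially the paper's approach: write the invariant as a sum over integral points of values of a holomorphic function built from the quantum dilogarithm, apply Poisson summation, identify the leading Fourier coefficients' critical point with the solution of the gluing/Dehn-filling equations for the figure-$8$ complement, verify the critical value is $\sqrt{-1}(\mathrm{Vol}+\sqrt{-1}\,\mathrm{CS})$, and run a saddle-point estimate while bounding the other coefficients by the ``half-volume'' threshold. Your identification of where the hypothesis $\mathrm{Vol}(M_{K_\theta})>\tfrac12\mathrm{Vol}(S^3\setminus K_{4_1})$ is used is exactly right (up to a harmless factor-of-two normalization of the potential), and your flagged concerns about the admissibility of the deformed contour and the nonsingularity of the Hessian are precisely the issues the paper settles with a concavity lemma for $\mathrm{Im}\,V$ and the positivity of $\mathrm{Re}\,\mathrm{Hess}\,V$.

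One step of your plan is off, though. You propose to obtain the state sum by viewing $(M,K)$ as an integral Dehn filling on a fundamental shadow link, invoking Costantino--Thurston and the formulas from the first paper. That is not how the paper proceeds, and it would not work as stated: $S^3\setminus K_{4_1}$ is not a fundamental shadow link complement, so the fundamental-shadow-link sum from the earlier paper does not apply here. The actual starting point is elementary: $M$ is $\tfrac pq$-surgery on $K_{4_1}$, one expands $\tfrac pq$ as a continued fraction with integral framings $a_1,\dots,a_k$ to get a Kirby diagram which is a chain of unknots with the last component encircled by $K_{4_1}$, one colors the chain with the Kirby color and $K$ with a fixed color, and one uses Habiro's cyclotomic formula for the colored Jones polynomials of $K_{4_1}$ to produce the explicit finite sum. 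The quadratic (Gaussian) part of the resulting potential in the auxiliary variables $x_1,\dots,x_{k-1}$ is then integrated out term by term after Poisson summation, collapsing to the two-variable potential whose critical equations reproduce the edge and $\tfrac pq$-Dehn-filling equations with cone angle $\theta$. Your remaining steps (2)--(5) then proceed as you outline; you should just replace the Costantino--Thurston input with Habiro's formula plus the continued-fraction surgery presentation, and be explicit that the competing Fourier coefficients are controlled uniformly over the $r$-dependent summation region by the Lobachevsky-function bound $\Lambda(\pi/6)=\tfrac12\cdot 3\Lambda(\pi/3)=\tfrac14\mathrm{Vol}(S^3\setminus K_{4_1})$, which is where the factor $\tfrac12$ in the volume hypothesis comes from.
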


We note that if $M\setminus K$ is homeomorphic to $S^3\setminus K_{4_1},$ then $M$ is obtained from $S^3$ by doing a $\frac{p}{q}$-surgery along $K_{4_1}.$ In Proposition \ref{small}, we show that if $(p,q)\neq(\pm 1, 0),$ $(0,\pm 1),$ $(\pm 1,\pm 1),$ $(\pm 2,\pm 1),$ $(\pm 3,\pm 1),$ $(\pm 4,\pm 1)$ and $(\pm 5,\pm 1),$ then any $\theta$ less than or equal to $2\pi$ satisfies the condition of Theorem \ref{main}; and if $(p,q)$ is one of those sporadic cases except $(\pm 1,0),$ then any $\theta$ less than or equal to $\pi$ satisfies the condition of Theorem \ref{main}.
\\

\noindent\textbf{Outline of the proof.} The proof follows the guideline of Ohtsuki's method. In Proposition \ref{formula}, we compute the relative Reshetikhin-Turaev invariants of $(M,K)$ and write them as a sum of values of holomorphic functions $f^\pm_r$ at integral points. The functions $f^\pm_r$ comes from Faddeev's quantum dilogarithm function. Using Poisson summation formula, we in Proposition \ref{Poisson} write the invariants as a sum of the Fourier coefficients of $f_r,$ and in Propositions \ref{0}, \ref{n} and  \ref{other'} we simplify those Fourier coefficients by doing some preliminary estimate. In Proposition \ref{Vol} we show that the critical value of the functions in the two leading Fourier coefficients $\hat f^\pm_r(0,\dots, 0)$ have real part the  volume and imaginary part the Chern-Simons invariant of $M_{K_\theta}.$ The key observation is Lemma \ref{=}  that the system of critical point equations is equivalent to the system of hyperbolic gluing equations (consisting of an edge equation and a $\frac{p}{q}$ Dehn-filling equation with cone angle $\theta$) for a particular ideal triangulation of the figure-$8$ knot complement. In Section \ref{leading'} we verify the conditions for the saddle point approximation showing that the growth rate of the leading Fourier coefficients are the critical values; and in Section \ref{other}, we estimate the other Fourier coefficients showing that they are neglectable.
\\

\noindent\textbf{Acknowledgments.}  The authors would like to thank Feng Luo and Hongbin Sun for helpful discussions. The second author is partially supported by NSF Grant DMS-1812008.


\section{Preliminaries}

For the readers' convenience, we recall the relative Reshetikhin-Turaev invariants, hyperbolic cone metrics and the classical and quantum dilogarithm functions  respectively in Sections \ref{RRT}, \ref{CCS}, \ref{DL} and \ref{QDL}.

\subsection{Relative Reshetikhin-Turaev invariants}\label{RRT}

In this article we will follow the skein theoretical approach of the relative Reshetikhin-Turaev invariants\,\cite{BHMV, Li} and focus on the root of unity  $q=e^{\frac{2\pi\sqrt{-1}}{r}}$ for odd integers $r\geqslant 3.$

A framed link in an oriented $3$-manifold $M$ is a smooth embedding $L$ of a disjoint union of finitely many thickened circles $\mathrm S^1\times [0,\epsilon],$ for some $\epsilon>0,$ into $M.$ The Kauffman bracket skein module $\mathrm K_r(M)$ of $M$ is the $\mathbb C$-module generated by the isotopic classes of framed links in $M$  modulo the follow two relations: 

\begin{enumerate}[(1)]
\item  \emph{Kauffman Bracket Skein Relation:} \ $\cp{\includegraphics[width=1cm]{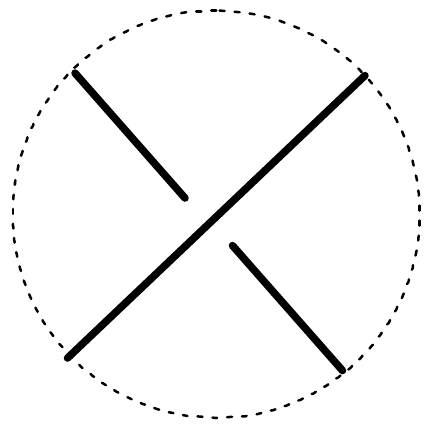}}\ =\ e^{\frac{\pi\sqrt{-1}}{r}}\ \cp{\includegraphics[width=1cm]{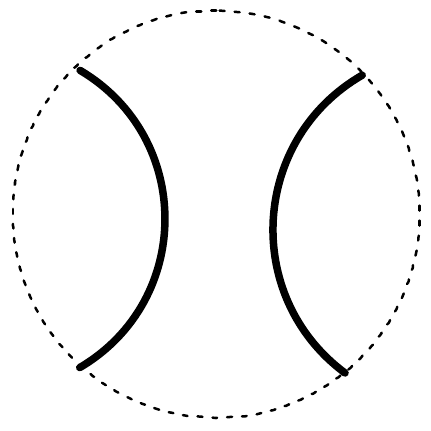}}\  +\ e^{-\frac{\pi\sqrt{-1}}{r}}\ \cp{\includegraphics[width=1cm]{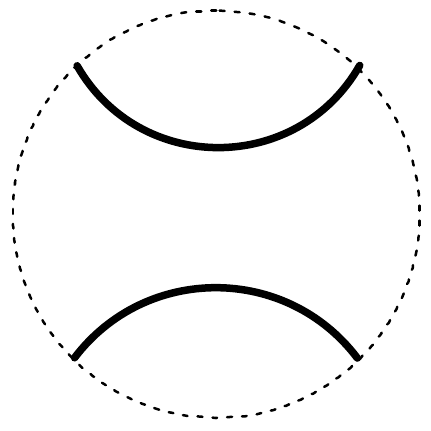}}.$ 

\item \emph{Framing Relation:} \ $L \cup \cp{\includegraphics[width=0.8cm]{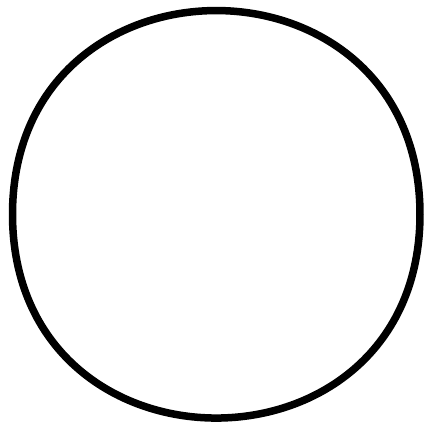}}=(-e^{\frac{2\pi\sqrt{-1}}{r}}-e^{-\frac{2\pi\sqrt{-1}}{r}})\ L.$ 
\end{enumerate}

There is a canonical isomorphism 
$$\langle\ \rangle:\mathrm K_r(\mathrm S^3)\to\mathbb C$$
defined by sending the empty link to $1.$ The image $\langle L\rangle$ of the framed link $L$ is called the Kauffman bracket of $L.$

Let $\mathrm K_r(A\times [0,1])$ be the Kauffman bracket skein module of the product of an annulus $A$ with a closed interval. For any link diagram $D$ in $\mathbb R^2$ with $k$ ordered components and $b_1, \dots, b_k\in \mathrm K_r(A\times [0,1]),$ let 
$$\langle b_1,\dots, b_k\rangle_D$$
be the complex number obtained by cabling $b_1,\dots, b_k$ along the components of $D$ considered as a element of $K_r(\mathrm S^3)$ then taking the Kauffman bracket $\langle\ \rangle.$

On $\mathrm K_r(A\times [0,1])$ there is a commutative multiplication induced by the juxtaposition of annuli, making it a $\mathbb C$-algebra; and as a $\mathbb C$-algebra $\mathrm K_r(A\times [0,1])  \cong \mathbb C[z],$ where $z$ is the core curve of $A.$ For an integer $n\geqslant 0,$ let $e_n(z)$ be the $n$-th Chebyshev polynomial defined recursively by
$e_0(z)=1,$ $e_1(z)=z$ and $e_n(z)=ze_{n-1}(z)-e_{n-2}(z).$ Let $\mathrm{I}_r=\{0,\dots,r-2\}$ be the set of integers in between $0$ and $r-2.$ Then the Kirby coloring $\Omega_r\in\mathrm K_r(A\times [0,1])$ is defined by 
$$\Omega_r=\mu_r\sum_{n\in \mathrm{I}_r}(-1)^n[n+1]e_{n},$$
where
 $$\mu_r=\frac{\sqrt2\sin\frac{2\pi}{r}}{\sqrt r}$$ 
 and $[n]$ is the quantum integer defined by
$$[n]=\frac{e^{\frac{2n\pi\sqrt{-1}}{r}}-e^{-\frac{2n\pi\sqrt{-1}}{r}}}{e^{\frac{2\pi\sqrt{-1}}{r}}-e^{-\frac{2\pi\sqrt{-1}}{r}}}.$$

Let $M$ be a closed oriented $3$-manifold and let $L$ be a framed link in $M$ with $n$ components. Suppose $M$ is obtained from $S^3$ by doing a surgery along a framed link $L',$ $D(L')$ is a standard diagram of $L'$ (ie, the blackboard framing of $D(L')$ coincides with the framing of $L'$). Then $L$ adds extra components to $D(L')$ forming a linking diagram $D(L\cup L')$ with $D(L)$ and $D(L')$ linking in possibly a complicated way. Let
$U_+$ be the diagram of the unknot with framing $1,$ $\sigma(L')$ be the signature of the linking matrix of $L'$ and $\mathbf m=(m_1,\dots,m_n)$ be a multi-elements of $I_r.$ Then the $r$-th \emph{relative Reshetikhin-Turaev invariant of $M$ with $L$ colored by $\mathbf m$} is defined as
\begin{equation}\label{RT}
\mathrm{RT}_r(M,L,\mathbf m)=\mu_r \langle e_{m_1},\dots,e_{m_n}, \Omega_r, \dots, \Omega_r\rangle_{D(L\cup L')}\langle \Omega_r\rangle _{U_+}^{-\sigma(L')}.
\end{equation}

Note that if $L=\emptyset$ or $m_1=\dots=m_n=0,$ then $\mathrm{RT}_r(M,L,\mathbf m)=\mathrm{RT}_r(M),$ the $r$-th Reshetikhin-Turaev invariant of $M;$ and if $M=S^3,$ then $\mathrm{RT}_r(M,L,\mathbf m)=\mu_r\mathrm J_{\mathbf m, L}(q^2),$ the value of the $\mathbf m$-th unnormalized colored Jones polynomial of $L$ at $t=q^2.$

\subsection{Hyperbolic cone manifolds}\label{CCS}

According to \cite{CHK}, a $3$-dimensional \emph{hyperbolic cone-manifold} is a $3$-manifold $M,$ which can be triangulated so that the link of each simplex is piecewise linear homeomorphic to a standard sphere and $M$ is equipped with a complete path metric such that the restriction of the metric to each simplex is isometric to a hyperbolic geodesic simplex. The \emph{singular locus} $L$ of a cone-manifold $M$ consists of the points with no neighborhood isometric to a ball in a Riemannian manifold. It follows that
\begin{enumerate}[(1)]
\item $L$ is a link in $M$ such that each component is a closed geodesic.

\item At each point of $L$ there is a \emph{cone angle} $\theta$ which is the sum of dihedral angles of $3$-simplices containing the point.

\item The restriction of the metric on $M\setminus L$ is a smooth hyperbolic metric, but is incomplete if $L\neq \emptyset.$
\end{enumerate}

Hodgson-Kerckhoff\,\cite{HK} proved that hyperbolic cone metrics on $M$ with singular locus $L$ are locally parametrized by the cone angles provided all the cone angles are less than or equal to $2\pi,$ and Kojima\,\cite{Ko} proved that hyperbolic cone manifolds $(M,L)$ are globally rigid provided all the cone angles are less than or equal to $\pi.$ It is expected to be  globally rigid if all the cone angles are less than or equal to $2\pi.$

Given a $3$-manifold $N$ with boundary a union of tori $T_1,\dots,T_n,$ a choice of generators $(u_i,v_i)$ for each $\pi_1(T_i)$ and pairs of relatively prime integers $(p_i, q_i),$ one can do the $(\frac{p_1}{q_1},\dots,\frac{p_k}{q_k})$-Dehn filling on $N$ by attaching a solid torus to each $T_i$ so that $p_iu_i+q_iv_i$ bounds a disk.  If $\mathrm H(u_i)$ and  $\mathrm H(v_i)$ are respectively the logarithmic holonomy for $u_i$ and $v_i,$ then a solution to  
\begin{equation}\label{DF}
p_i\mathrm H(u_i)+q_i\mathrm H(v_i)=\sqrt{-1}\theta_i
\end{equation}
 near the complete structure gives a cone-manifold structure on the resulting manifold $M$ with the cone angle $\theta_i$ along the core curve $L_i$ of the solid torus attached to $T_i;$ it is a smooth structure if $\theta_1=\dots=\theta_n= 2\pi.$

In this setting, the Chern-Simons invariant for a hyperbolic cone manifold $(M,L)$ can be defined by using the Neumann-Zagier potential function\,\cite{NZ}. To do this, we need a framing on each component, namely, a choice of a curve $\gamma_i$ on $T_i$ that is isotopic to the core curve $L_i$ of the solid torus attached to $T_i.$ We choose the orientation of $\gamma_i$ so that $(p_iu_i+q_iv_i)\cdot \gamma_i=1.$ Then we consider the following function 
$$\frac{\Phi(\mathrm{H}(u_1),\dots,\mathrm{H}(u_n))}{\sqrt{-1}}-\sum_{i=1}^n\frac{\mathrm H(u_i)\mathrm H(v_i)}{4\sqrt{-1}}+\sum_{i=1}^n\frac{\theta_i\mathrm H(\gamma_i)}{4},$$
where $\Phi$ is the Neumann-Zagier potential function (see \cite{NZ}) defined on the deformation space of hyperbolic structures on $M\setminus L$ parametrized by the holonomy of the meridians $\{\mathrm H(u_i)\},$ characterized by 
\begin{equation}\label{char}
\left \{\begin{array}{l}
\frac{\partial \Phi(\mathrm{H}(u_1),\dots,\mathrm{H}(u_n))}{\partial \mathrm{H}(u_i)}=\frac{\mathrm H(v_i)}{2},\\
\\
\Phi(0,\dots,0)=\sqrt{-1}\bigg(\mathrm{Vol}(M\setminus L)+\sqrt{-1}\mathrm{CS}(M\setminus L)\bigg)\quad\quad\mathrm{mod}\ \pi^2\mathbb Z,
\end{array}\right.
\end{equation} 
where $M\setminus L$ is with the complete hyperbolic metric. Another important feature of $\Phi$ is that it is even in each of its variables $\mathrm H(u_i).$

Following the argument in \cite[Sections 4 \& 5]{NZ}, one can prove that if  the cone angles of components of $L$ are $\theta_1,\dots,\theta_n,$ then
\begin{equation}\label{VOL}
\mathrm{Vol}(M_{L_\theta})=\mathrm{Re}\bigg(\frac{\Phi(\mathrm{H}(u_1),\dots,\mathrm{H}(u_n))}{\sqrt{-1}}-\sum_{i=1}^n\frac{\mathrm H(u_i)\mathrm H(v_i)}{4\sqrt{-1}}+\sum_{i=1}^n\frac{\theta_i\mathrm H(\gamma_i)}{4}\bigg).
\end{equation}
Indeed, in this case, one can replace the $2\pi$ in Equations (33) (34) and (35) of \cite{NZ} by $\theta_i,$ and as a consequence can replace the $\frac{\pi}{2}$ in Equations (45), (46) and (48) by $\frac{\theta_i}{4},$ proving the result.

In \cite{Y}, Yoshida proved that when $\theta_1=\dots=\theta_n=2\pi,$
$$\mathrm{Vol}(M)+\sqrt{-1}\mathrm{CS}(M)=\frac{\Phi(\mathrm{H}(u_1),\dots,\mathrm{H}(u_n))}{\sqrt{-1}}-\sum_{i=1}^n\frac{\mathrm H(u_i)\mathrm H(v_i)}{4\sqrt{-1}}+\sum_{i=1}^n\frac{\theta_i\mathrm H(\gamma_i)}{4}\quad\quad\text{mod }\sqrt{-1}\pi^2\mathbb Z.$$

Therefore, we can make the following 

\begin{definition}\label{CS} The \emph{Chern-Simons invariant} of a hyperbolic cone manifold $M_{L_\theta}$ with a choice of the framing $(\gamma_1,\dots,\gamma_n)$ 
is defined as
$$\mathrm{CS}(M_{L_\theta})=\mathrm{Im}\bigg(\frac{\Phi(\mathrm{H}(u_1),\dots,\mathrm{H}(u_n))}{\sqrt{-1}}-\sum_{i=1}^n\frac{\mathrm H(u_i)\mathrm H(v_i)}{4\sqrt{-1}}+\sum_{i=1}^n\frac{\theta_i\mathrm H(\gamma_i)}{4}\bigg)\quad\quad\text{mod }\pi^2\mathbb Z.$$
\end{definition}

Then together with (\ref{VOL}), we have
\begin{equation}\label{VCS}
\mathrm{Vol}(M_{L_\theta})+\sqrt{-1}\mathrm{CS}(M_{L_\theta})=\frac{\Phi(\mathrm{H}(u_1),\dots,\mathrm{H}(u_n))}{\sqrt{-1}}-\sum_{i=1}^n\frac{\mathrm H(u_i)\mathrm H(v_i)}{4\sqrt{-1}}+\sum_{i=1}^n\frac{\theta_i\mathrm H(\gamma_i)}{4}\quad\quad\text{mod }\sqrt{-1}\pi^2\mathbb Z.
\end{equation}

\begin{remark} It is an interesting question to find a direct geometric definition of the Chern-Simons invariants for hyperbolic cone manifolds.
\end{remark}



\subsection{Dilogarithm and Lobachevsky functions}\label{DL}

Let $\log:\mathbb C\setminus (-\infty, 0]\to\mathbb C$ be the standard logarithm function defined by
$$\log z=\log|z|+i\arg z$$
with $-\pi<\arg z<\pi.$
 
The dilogarithm function $\mathrm{Li}_2: \mathbb C\setminus (1,\infty)\to\mathbb C$ is defined by
$$\mathrm{Li}_2(z)=-\int_0^z\frac{\log (1-u)}{u}du,$$
which is holomorphic in $\mathbb C\setminus [1,\infty)$ and continuous in $\mathbb C\setminus (1,\infty).$

The dilogarithm function satisfies the follow properties (see eg. Zagier\,\cite{Z})
\begin{equation}\label{Li2}
\mathrm{Li}_2\Big(\frac{1}{z}\Big)=-\mathrm{Li}_2(z)-\frac{\pi^2}{6}-\frac{1}{2}\big(\log(-z)\big)^2.
\end{equation} 
In the unit disk $\big\{z\in\mathbb C\,\big|\,|z|<1\big\},$ 
\begin{equation}\label{Li1}
\mathrm{Li}_2(z)=\sum_{n=1}^\infty\frac{z^n}{n^2},
\end{equation}
and on the unit circle $\big\{ z=e^{2i\theta}\,\big|\,0 \leqslant \theta\leqslant\pi\big\},$ 
\begin{equation}\label{dilogLob}
\mathrm{Li}_2(e^{2i\theta})=\frac{\pi^2}{6}+\theta(\theta-\pi)+2i\Lambda(\theta),
\end{equation}
where  $\Lambda:\mathbb R\to\mathbb R$  is the Lobachevsky function (see eg. Thurston's notes\,\cite[Chapter 7]{T}) defined by
$$\Lambda(\theta)=-\int_0^\theta\log|2\sin t|dt.$$
The Lobachevsky function  is an odd function of period $\pi.$ 


\subsection{Quantum dilogarithm functions}\label{QDL}
We will consider the following variant of Faddeev's quantum dilogarithm functions\,\cite{F, FKV}. 
Let $r\geqslant 3$ be an odd integer. Then the following contour integral
\begin{equation}
\varphi_r(z)=\frac{4\pi i}{r}\int_{\Omega}\frac{e^{(2z-\pi)x}}{4x \sinh (\pi x)\sinh (\frac{2\pi x}{r})}\ dx
\end{equation}
defines a holomorphic function on the domain $$\Big\{z\in \mathbb C \ \Big|\ -\frac{\pi}{r}<\mathrm{Re}z <\pi+\frac{\pi}{r}\Big\},$$  
  where the contour is
$$\Omega=\big(-\infty, -\epsilon\big]\cup \big\{z\in \mathbb C\ \big||z|=\epsilon, \mathrm{Im}z>0\big\}\cup \big[\epsilon,\infty\big),$$
for some $\epsilon\in(0,1).$
Note that the integrand has poles at $ni,$ $n\in\mathbb Z,$ and the choice of  $\Omega$ is to avoid the pole at $0.$
\\

The function $\varphi_r(z)$ satisfies the following fundamental property.
\begin{lemma}
\begin{enumerate}[(1)]
\item For $z\in\mathbb C$ with  $0<\mathrm{Re}z<\pi,$
\begin{equation}\label{fund}
1-e^{2 i z}=e^{\frac{r}{4\pi i}\Big(\varphi_r\big(z-\frac{\pi}{r}\big)-\varphi_r\big(z+\frac{\pi}{r}\big)\Big)}
 \end{equation}
 
 \item For $z\in\mathbb C$ with  $-\frac{\pi}{r}<\mathrm{Re}z<\frac{\pi}{r},$
 \begin{equation}\label{f2}
1+e^{riz}=e^{\frac{r}{4\pi i}\Big(\varphi_r(z)-\varphi_r\big(z+\pi\big)\Big)}.
\end{equation}
\end{enumerate}
\end{lemma}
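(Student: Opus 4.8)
\emph{Plan.} The idea is to prove both identities the same way: combine the two shifted copies of $\varphi_r$ into a single elementary contour integral, evaluate that integral by a residue sum in the region $\mathrm{Im}(z)>0$, and then pass to the whole strip by the identity theorem.

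\emph{Step 1 (reduction to one integral).} For part (1), substitute $z\mapsto z\mp\frac\pi r$ in the defining integral of $\varphi_r$ and subtract. The numerators combine to $e^{(2z-\pi)x}\big(e^{-2\pi x/r}-e^{2\pi x/r}\big)=-2e^{(2z-\pi)x}\sinh\frac{2\pi x}{r}$, which cancels the factor $\sinh\frac{2\pi x}{r}$ in the denominator, giving
\[
\frac{r}{4\pi i}\Big(\varphi_r\big(z-\tfrac\pi r\big)-\varphi_r\big(z+\tfrac\pi r\big)\Big)=-\frac12\int_\Omega\frac{e^{(2z-\pi)x}}{x\sinh(\pi x)}\,dx .
\]
For part (2), subtracting $\varphi_r(z+\pi)$ from $\varphi_r(z)$ produces the factor $1-e^{2\pi x}=-2e^{\pi x}\sinh(\pi x)$, which cancels $\sinh(\pi x)$, leaving
\[
\frac{r}{4\pi i}\Big(\varphi_r(z)-\varphi_r(z+\pi)\Big)=-\frac12\int_\Omega\frac{e^{2zx}}{x\sinh\!\big(\tfrac{2\pi x}{r}\big)}\,dx .
\]
On the vertical strips in the statement the integrands decay exponentially as $\mathrm{Re}(x)\to\pm\infty$, so both integrals converge absolutely and are holomorphic in $z$ there; these strips are exactly the common domains of holomorphy of the two shifted copies of $\varphi_r$ on the left.

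\emph{Step 2 (residue evaluation for $\mathrm{Im}(z)>0$).} Assume $\mathrm{Im}(z)>0$ and close $\Omega$ (which passes above $x=0$) into the upper half plane by a horizontal segment whose height tends to $+\infty$ through values halfway between consecutive poles. In part (1) the integrand has simple poles at $x=in$, $n\geqslant1$, with residue $\frac{e^{2izn}}{i\pi n}$ (using $\frac{d}{dx}\sinh(\pi x)|_{x=in}=\pi(-1)^n$ and $e^{-i\pi n}=(-1)^n$); along the closing segments $|\sinh(\pi x)|\geqslant1$ while $|e^{(2z-\pi)x}|$ decays like $e^{-2\mathrm{Im}(z)\cdot(\mathrm{height})}$, and the vertical sides vanish as well, so
\[
\int_\Omega\frac{e^{(2z-\pi)x}}{x\sinh(\pi x)}\,dx=2\pi i\sum_{n=1}^{\infty}\frac{e^{2izn}}{i\pi n}=2\sum_{n=1}^{\infty}\frac{e^{2izn}}{n}=-2\log\big(1-e^{2iz}\big),
\]
using $\sum_{n\geqslant1}w^n/n=-\log(1-w)$ for $|w|<1$. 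Combined with Step 1 this gives $\frac{r}{4\pi i}\big(\varphi_r(z-\tfrac\pi r)-\varphi_r(z+\tfrac\pi r)\big)=\log(1-e^{2iz})$ on $\{0<\mathrm{Re}(z)<\pi,\ \mathrm{Im}(z)>0\}$. The same computation for part (2), now with simple poles at $x=\tfrac{inr}{2}$, $n\geqslant1$, and residues $\tfrac{(-1)^ne^{inrz}}{i\pi n}$, yields $\int_\Omega\tfrac{e^{2zx}}{x\sinh(2\pi x/r)}\,dx=2\sum_{n\geqslant1}\tfrac{(-1)^ne^{inrz}}{n}=-2\log(1+e^{irz})$, hence $\frac{r}{4\pi i}\big(\varphi_r(z)-\varphi_r(z+\pi)\big)=\log(1+e^{irz})$ on $\{|\mathrm{Re}(z)|<\tfrac\pi r,\ \mathrm{Im}(z)>0\}$.

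\emph{Step 3 (continuation) and the main obstacle.} Exponentiating the two displays of Step 2 gives the desired identities on the respective half-strips; since each side is holomorphic on the full (connected) strip, the identity theorem upgrades both equalities to the entire strips, which are exactly the domains in the statement. The only genuinely delicate points are the justification of the contour deformation — controlling the horizontal and vertical connecting integrals, which relies on the lower bound $|\sinh|\geqslant1$ along lines halfway between the poles together with the strict inequalities defining the strips and the hypothesis $\mathrm{Im}(z)>0$ — and the identification of the convergent residue series with the correct branch of the logarithm, which is automatic once one works with the exponentials as in Step 3. Everything else is the routine algebra of merging the two shifted integral representations in Step 1.
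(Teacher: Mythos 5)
The paper states this lemma without proof — it is a standard property of Faddeev's quantum dilogarithm, going back to \cite{F, FKV} — so there is no in-paper argument to compare against. Your proof is correct and follows the canonical route: after the elementary cancellation of one hyperbolic sine, the difference of shifted integrals becomes a single contour integral which, for $\mathrm{Im}(z)>0$, can be closed over the upper half-plane; the residue sum at $x=in$ (resp. $x=inr/2$) reproduces the Taylor series of $-\log(1-e^{2iz})$ (resp. $-\log(1+e^{irz})$), and the identity theorem extends the exponentiated equality to the full strip. The residue computations, the bound $|\sinh(\cdot)|\geqslant 1$ on horizontal lines halfway between poles, and the decay conditions forced by $0<\mathrm{Re}(z)<\pi$ (resp. $|\mathrm{Re}(z)|<\pi/r$) that make the vertical sides vanish are all handled correctly.
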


Using (\ref{fund}) and (\ref{f2}), for $z\in\mathbb C$ with $\pi+\frac{2(n-1)\pi}{r}< \mathrm{Re}z< \pi+\frac{2n\pi}{r},$ we can define $\varphi_r(z)$  inductively by the relation
\begin{equation}\label{extension}
\prod_{k=1}^n\Big(1-e^{2 i \big(z-\frac{(2k-1)\pi}{r}\big)}\Big)=e^{\frac{r}{4\pi i}\Big(\varphi_r\big(z-\frac{2n\pi}{r}\big)-\varphi_r(z)\Big)},
\end{equation}
extending $\varphi_r(z)$ to a meromorphic function on $\mathbb C.$  The poles of $\varphi_r(z)$ have the form $(a+1)\pi+\frac{b\pi}{r}$ or $-a\pi-\frac{b\pi}{r}$ for all nonnegative integer $a$ and positive odd integer $b.$

Let $q=e^{\frac{2\pi i}{r}},$
and let $$(q)_n=\prod_{k=1}^n(1-q^{2k}).$$

\begin{lemma}\label{fact}
\begin{enumerate}[(1)]
\item For $0\leqslant n \leqslant r-2,$
\begin{equation}
(q)_n=e^{\frac{r}{4\pi i}\Big(\varphi_r\big(\frac{\pi}{r}\big)-\varphi_r\big(\frac{2\pi n}{r}+\frac{\pi}{r}\big)\Big)}.
\end{equation}
\item For $\frac{r-1}{2}\leqslant n \leqslant r-2,$
\begin{equation} \label{move}
(q)_n=2e^{\frac{r}{4\pi i}\Big(\varphi_r\big(\frac{\pi}{r}\big)-\varphi_r\big(\frac{2\pi n}{r}+\frac{\pi}{r}-\pi\big)\Big)}.
\end{equation}
\end{enumerate}
\end{lemma}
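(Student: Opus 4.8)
The plan is to derive both identities by telescoping the functional equation (\ref{fund}), using the relation $q^r=1$ to pull the ``wrapped'' factors $1-q^{2k}$ with $k\geqslant\frac{r+1}{2}$ back into the strip where $\varphi_r$ is given by its integral representation, and extracting the constant $2$ in part (2) from the special value of (\ref{f2}) at $z=0$.

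For part (1) with $0\leqslant n\leqslant\frac{r-1}{2}$, I would apply (\ref{fund}) to each factor $1-q^{2k}=1-e^{2i\cdot\frac{2\pi k}{r}}$: since $0<\frac{2\pi k}{r}<\pi$ for $1\leqslant k\leqslant\frac{r-1}{2}$, this gives $1-q^{2k}=\exp\bigl(\frac{r}{4\pi i}(\varphi_r(\frac{(2k-1)\pi}{r})-\varphi_r(\frac{(2k+1)\pi}{r}))\bigr)$, and multiplying over $k=1,\dots,n$ the exponents telescope to $\varphi_r(\frac{\pi}{r})-\varphi_r(\frac{2\pi n}{r}+\frac{\pi}{r})$. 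For $\frac{r+1}{2}\leqslant n\leqslant r-2$ the point $\frac{2\pi n}{r}+\frac{\pi}{r}$ lies outside the strip $-\frac{\pi}{r}<\mathrm{Re}\,z<\pi+\frac{\pi}{r}$, so one must use the meromorphic extension (\ref{extension}). Setting $m=n-\frac{r-1}{2}$ and $z=\frac{(2n+1)\pi}{r}$ in (\ref{extension}) — first at $z=\frac{(2n+1)\pi}{r}-\epsilon$, where the identity holds by definition, then letting $\epsilon\to0^+$ since $\varphi_r$ is holomorphic at $\frac{(2n+1)\pi}{r}$ — the left-hand side expands to $\prod_{k=(r+1)/2}^{n}(1-q^{2k})$ and $z-\frac{2m\pi}{r}=\pi$, so the identity reads $\prod_{k=(r+1)/2}^{n}(1-q^{2k})=\exp\bigl(\frac{r}{4\pi i}(\varphi_r(\pi)-\varphi_r(\frac{2\pi n}{r}+\frac{\pi}{r}))\bigr)$. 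Multiplying by the case $n=\frac{r-1}{2}$ already established, namely $(q)_{(r-1)/2}=\exp\bigl(\frac{r}{4\pi i}(\varphi_r(\frac{\pi}{r})-\varphi_r(\pi))\bigr)$, cancels the $\varphi_r(\pi)$ terms and gives (1) in the full range.

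For part (2), with $\frac{r-1}{2}\leqslant n\leqslant r-2$, I would split $(q)_n=(q)_{(r-1)/2}\cdot\prod_{k=(r+1)/2}^{n}(1-q^{2k})$ and use $q^r=1$ to write $1-q^{2k}=1-q^{2k-r}=1-e^{2i\cdot\frac{(2k-r)\pi}{r}}$, where now $\frac{(2k-r)\pi}{r}\in(0,\pi)$; applying (\ref{fund}) to these factors and telescoping yields $\prod_{k=(r+1)/2}^{n}(1-q^{2k})=\exp\bigl(\frac{r}{4\pi i}(\varphi_r(0)-\varphi_r(\frac{2\pi n}{r}+\frac{\pi}{r}-\pi))\bigr)$. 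Combining this with $(q)_{(r-1)/2}=\exp\bigl(\frac{r}{4\pi i}(\varphi_r(\frac{\pi}{r})-\varphi_r(\pi))\bigr)$ and the identity $\exp\bigl(\frac{r}{4\pi i}(\varphi_r(0)-\varphi_r(\pi))\bigr)=2$ — which is (\ref{f2}) evaluated at $z=0$, where $1+e^{riz}=1+1=2$ — produces exactly the factor $2$ in (\ref{move}).

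The two telescoping computations are routine; the only delicate point, and the step I expect to be the main obstacle, is the domain bookkeeping: keeping track of which arguments of $\varphi_r$ remain in the strip where (\ref{fund}) and (\ref{f2}) hold literally and which require the continuation (\ref{extension}), handling the argument $\frac{2\pi n}{r}+\frac{\pi}{r}$ sitting on a ``seam'' of the piecewise definition of $\varphi_r$, and checking that the doubling from (\ref{f2}) is accounted for exactly once — in particular that parts (1) and (2) are consistent at the overlap $n=\frac{r-1}{2}$, where $\frac{2\pi n}{r}+\frac{\pi}{r}-\pi=0$ and the factor $2$ reappears through the same evaluation of (\ref{f2}).
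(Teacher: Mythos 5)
Your proof is correct and follows what appears to be the intended line of reasoning: telescoping the functional equation (\ref{fund}), invoking the inductive extension (\ref{extension}) only where the argument exits the basic strip, reducing exponents modulo $q^r=1$ in part (2) so all arguments of $\varphi_r$ land back in $(-\tfrac{\pi}{r},\pi+\tfrac{\pi}{r})$, and extracting the factor $2$ from (\ref{f2}) at $z=0$. The paper states the lemma without a written proof, but your derivation is exactly the one suggested by the surrounding text (the remark after the lemma, ``we move everything to $(0,\pi)$ to avoid the poles,'' is precisely the $q^{2k}\mapsto q^{2k-r}$ shift you use), and your limit argument at the seam $z=\frac{(2n+1)\pi}{r}=\pi+\frac{2m\pi}{r}$ is the right way to handle the boundary case; I checked that $\varphi_r$ is holomorphic at both $\pi$ and $\frac{(2n+1)\pi}{r}$ for $n\leqslant r-2$, so the limit is legitimate and the $\varphi_r(\pi)$ terms cancel as you claim.
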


Since 
$$\{n\}!=(-1)^nq^{-\frac{n(n+1)}{2}}(q)_n,$$
as a consequence of Lemma \ref{fact}, we have

\begin{lemma}\label{factorial}
\begin{enumerate}[(1)]
\item For $0\leqslant n \leqslant r-2,$
\begin{equation}
\{n\}!=e^{\frac{r}{4\pi i}\Big(-2\pi\big(\frac{2\pi n}{r}\big)+\big(\frac{2\pi}{r}\big)^2(n^2+n)+\varphi_r\big(\frac{\pi}{r}\big)-\varphi_r\big(\frac{2\pi n}{r}+\frac{\pi}{r}\big)\Big)}.
\end{equation}
\item For $\frac{r-1}{2}\leqslant n \leqslant r-2,$
\begin{equation} \label{move}
\{n\}!=2e^{\frac{r}{4\pi i}\Big(-2\pi\big(\frac{2\pi n}{r}\big)+\big(\frac{2\pi }{r}\big)^2(n^2+n)+\varphi_r\big(\frac{\pi}{r}\big)-\varphi_r\big(\frac{2\pi n}{r}+\frac{\pi}{r}-\pi\big)\Big)}.
\end{equation}
\end{enumerate}
\end{lemma}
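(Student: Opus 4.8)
The plan is to obtain both formulas directly from Lemma \ref{fact}, combined with the elementary identity $\{n\}!=(-1)^nq^{-\frac{n(n+1)}{2}}(q)_n$ recalled just before the statement, where $q=e^{\frac{2\pi i}{r}}$. Since Lemma \ref{fact} already expresses $(q)_n$ in the form $e^{\frac{r}{4\pi i}(\cdots)}$ in each of the two ranges, the only thing to do is rewrite the prefactor $(-1)^nq^{-\frac{n(n+1)}{2}}$ in the same exponential form and merge the exponents.

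First I would compute $q^{-\frac{n(n+1)}{2}}=e^{-\frac{\pi i n(n+1)}{r}}$ and solve $\frac{r}{4\pi i}X=-\frac{\pi i n(n+1)}{r}$ for $X$, obtaining $X=\big(\frac{2\pi}{r}\big)^2(n^2+n)$; likewise, writing $(-1)^n=e^{i\pi n}$ and solving $\frac{r}{4\pi i}Y=i\pi n$ gives $Y=-2\pi\big(\frac{2\pi n}{r}\big)$. Hence $(-1)^nq^{-\frac{n(n+1)}{2}}=e^{\frac{r}{4\pi i}\left(-2\pi\left(\frac{2\pi n}{r}\right)+\left(\frac{2\pi}{r}\right)^2(n^2+n)\right)}$. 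These are exact equalities of complex numbers (the exponents are literally equal, not equal modulo some lattice), so no choice of branch or modular reduction is involved.

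Then I would multiply this identity by the expression for $(q)_n$ from Lemma \ref{fact}(1), valid for $0\leqslant n\leqslant r-2$, and add the exponents to get part (1); and by the expression from Lemma \ref{fact}(2), valid for $\frac{r-1}{2}\leqslant n\leqslant r-2$, where the extra factor $2$ simply carries over, to get part (2). There is no real obstacle beyond bookkeeping the sign conventions and the constant $\frac{r}{4\pi i}$ correctly; the substantive content is entirely contained in Lemma \ref{fact}.
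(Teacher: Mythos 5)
Your proof is correct and is exactly the paper's approach: the paper itself states that Lemma \ref{factorial} follows from Lemma \ref{fact} together with the identity $\{n\}!=(-1)^nq^{-\frac{n(n+1)}{2}}(q)_n$, and you have simply carried out the bookkeeping of rewriting $(-1)^n q^{-\frac{n(n+1)}{2}}$ in the form $e^{\frac{r}{4\pi i}(\cdots)}$ and merging the exponents, which is precisely what is intended.
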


We consider (\ref{move}) because there are poles in $(\pi,2\pi),$ so we move everything to $(0,\pi)$ to avoid the poles.

The function $\varphi_r(z)$ and the dilogarithm function are closely related as follows.

\begin{lemma}\label{converge}  \begin{enumerate}[(1)]
\item For every $z$ with $0<\mathrm{Re}z<\pi,$ 
\begin{equation}\label{conv}
\varphi_r(z)=\mathrm{Li}_2(e^{2iz})+\frac{2\pi^2e^{2iz}}{3(1-e^{2iz})}\frac{1}{r^2}+O\Big(\frac{1}{r^4}\Big).
\end{equation}
\item For every $z$ with $0<\mathrm{Re}z<\pi,$ 
\begin{equation}\label{conv}
\varphi_r'(z)=-2i\log(1-e^{2iz})+O\Big(\frac{1}{r^2}\Big).
\end{equation}
\end{enumerate}\end{lemma}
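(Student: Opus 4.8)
The plan is to extract the asymptotics directly from the contour integral defining $\varphi_r(z)$, by Taylor-expanding the factor $1/\sinh\!\big(\tfrac{2\pi x}{r}\big)$ in powers of $1/r$. From $\tfrac1{\sinh u}=\tfrac1u-\tfrac u6+\tfrac{7u^3}{360}-\cdots$ (valid for $0<|u|<\pi$), setting $u=\tfrac{2\pi x}{r}$ gives, for $|x|$ up to a fixed fraction of $r$,
$$\frac1{\sinh\!\big(\tfrac{2\pi x}{r}\big)}=\frac{r}{2\pi x}-\frac{\pi x}{3r}+R_r(x),\qquad |R_r(x)|\le C\,\frac{|x|^3}{r^3}.$$
Substituting into the defining integral, the first term produces $2i\int_\Omega\frac{e^{(2z-\pi)x}}{4x^2\sinh(\pi x)}\,dx$, the second produces $-\frac{\pi^2 i}{3r^2}\int_\Omega\frac{e^{(2z-\pi)x}}{\sinh(\pi x)}\,dx$, and the $R_r$-term, after multiplication by the prefactor $\tfrac{4\pi i}{r}$, is $O(1/r^4)$. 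For part (2) I would apply the same expansion to $\varphi_r'(z)=\frac{4\pi i}{r}\int_\Omega\frac{e^{(2z-\pi)x}}{2\sinh(\pi x)\sinh(\frac{2\pi x}{r})}\,dx$, keeping only the leading term, to obtain $\varphi_r'(z)=i\int_\Omega\frac{e^{(2z-\pi)x}}{x\sinh(\pi x)}\,dx+O(1/r^2)$; alternatively (2) follows from (1) by differentiating in $z$, which is legitimate because all the functions involved are holomorphic on the strip $\{0<\mathrm{Re}z<\pi\}$ and the estimate in (1) will be uniform on compact subsets.

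The remaining $z$-dependent integrals are then evaluated by residues. For $0<\mathrm{Re}z<\pi$ the integrand decays exponentially along the real axis, and for $\mathrm{Im}z>0$ also on semicircles of radius $N+\tfrac12$ in the upper half-plane, so one closes the contour upward; since $\Omega$ runs above $x=0$, that pole is not enclosed, and one collects the simple poles at $x=ni$, $n\ge 1$, where $\mathrm{Res}_{x=ni}\tfrac1{\sinh(\pi x)}=\tfrac{(-1)^n}{\pi}$ and $e^{(2z-\pi)ni}=(-1)^n e^{2niz}$. This yields
$$\int_\Omega\frac{e^{(2z-\pi)x}}{4x^2\sinh(\pi x)}\,dx=-\frac i2\sum_{n\ge1}\frac{e^{2niz}}{n^2}=-\frac i2\,\mathrm{Li}_2(e^{2iz}),\qquad
\int_\Omega\frac{e^{(2z-\pi)x}}{\sinh(\pi x)}\,dx=2i\sum_{n\ge1}e^{2niz}=\frac{2i\,e^{2iz}}{1-e^{2iz}},$$
and likewise $\int_\Omega\frac{e^{(2z-\pi)x}}{x\sinh(\pi x)}\,dx=2\sum_{n\ge1}\tfrac{e^{2niz}}{n}=-2\log(1-e^{2iz})$. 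Plugging back in: $2i\cdot(-\tfrac i2)\mathrm{Li}_2(e^{2iz})=\mathrm{Li}_2(e^{2iz})$ and $-\tfrac{\pi^2 i}{3r^2}\cdot\tfrac{2ie^{2iz}}{1-e^{2iz}}=\tfrac{2\pi^2}{3}\,\tfrac{e^{2iz}}{1-e^{2iz}}\,\tfrac1{r^2}$, which are exactly the two terms in (1); and $i\cdot(-2\log(1-e^{2iz}))=-2i\log(1-e^{2iz})$, which is (2). These series identities are first established for $\mathrm{Im}z>0$, where the series converge and the contour argument is valid; since both sides are holomorphic in $z$ on the whole strip $\{0<\mathrm{Re}z<\pi\}$ (note that $e^{2iz}\in[1,\infty)$ would force $\mathrm{Re}z\in\pi\mathbb Z$, so $\mathrm{Li}_2(e^{2iz})$, $\tfrac{e^{2iz}}{1-e^{2iz}}$ and $\log(1-e^{2iz})$ are holomorphic there), they extend to the full strip by the identity theorem.

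I expect the one genuinely technical step to be the rigorous error estimate. One splits $\int_\Omega$ at $|x|=r/10$, say. On $|x|\le r/10$ the expansion of $1/\sinh(\tfrac{2\pi x}{r})$ holds with $|R_r(x)|\le C|x|^3/r^3$, while the remaining factor is bounded by $C'e^{-\delta|x|}/|x|$ with $\delta=\pi-|2\mathrm{Re}z-\pi|>0$ — this is precisely where the hypothesis $0<\mathrm{Re}z<\pi$ enters — so this piece contributes $\tfrac{C''}{r^3}\int_{\mathbb R}e^{-\delta|x|}|x|^2\,dx=O(1/r^3)$, hence $O(1/r^4)$ after the prefactor. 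On $|x|>r/10$, the functions $1/\sinh(\tfrac{2\pi x}{r})$, $\tfrac{r}{2\pi x}$ and $\tfrac{\pi x}{3r}$ are each $O(1+|x|/r)$ on the real axis while the remaining factor still carries the weight $e^{-\delta|x|}$, so that tail is exponentially small in $r$. Keeping the constants uniform for $z$ in a compact subset of the strip makes all the $O$-terms uniform there, and the residue computation together with the identity-theorem continuation is then routine.
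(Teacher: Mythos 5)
The paper does not actually prove Lemma \ref{converge}: it is stated bare, as a known asymptotic expansion of Faddeev's quantum dilogarithm (such expansions appear, e.g., in Ohtsuki \cite{O} and in \cite{WY2}). Your proof is correct and is essentially the standard derivation in the literature. Briefly rechecking the ingredients: the Laurent expansion $\tfrac{1}{\sinh u}=\tfrac1u-\tfrac u6+\tfrac{7u^3}{360}-\cdots$ is right; substituting $u=\tfrac{2\pi x}{r}$ and multiplying by the prefactor $\tfrac{4\pi i}{r}$ produces exactly the three pieces you list, with the remainder $R_r$ contributing $O(1/r^4)$ after the $|x|\le cr$ split; the residue of $\tfrac1{\sinh(\pi x)}$ at $x=ni$ is indeed $\tfrac{(-1)^n}{\pi}$ and $e^{(2z-\pi)ni}=(-1)^ne^{2niz}$, so $\int_\Omega\tfrac{e^{(2z-\pi)x}}{4x^2\sinh(\pi x)}dx=-\tfrac i2\mathrm{Li}_2(e^{2iz})$ and $\int_\Omega\tfrac{e^{(2z-\pi)x}}{\sinh(\pi x)}dx=\tfrac{2ie^{2iz}}{1-e^{2iz}}$, giving exactly the two displayed terms; the closure of $\Omega$ by semicircles of radius $N+\tfrac12$ excludes the $x=0$ pole (the bump in $\Omega$ keeps it outside) and the semicircle contribution vanishes for $\mathrm{Im}\,z>0$; and the extension to the whole strip $0<\mathrm{Re}\,z<\pi$ by holomorphy is legitimate since $e^{2iz}\notin[1,\infty)$ there. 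Your alternate derivation of (2) by differentiating (1), using Cauchy estimates to push the uniform-on-compacta $O(1/r^4)$ through the derivative, is also valid and is arguably cleaner than redoing the contour argument. No gaps.
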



\section{Computation of the relative Reshetikhin-Turaev invariants}

Let $(M,K)$ be a pair such that $M\setminus K$ is homeomorphic to $S^3\setminus K_{4_1}.$ Then $M$ is obtained from $S^3$ by doing a $\frac{p}{q}$ Dehn-filling along $K_{4_1}$ and $K$ is isotopic to a curve on the boundary of the tubular neighborhood of $K_{4_1}$ that intersects the $(p,q)$-curve of the boundary at exactly one point.  By eg. \cite[p.273]{R}, $M$ can also be obtained by doing a surgery along a framed link $L$ of $k$ components with framings $a_1,\dots, a_k$ coming from the continued fraction expansion
$$\frac{p}{q}=a_k-\frac{1}{a_{k-1}-\frac{1}{\cdots-\frac{1}{a_1}}}$$ of $\frac{p}{q},$ and $K$ is a framed trivial loop isotopic to the meridian of the last loop (see Figure \ref{link}).

\begin{figure}[htbp]
\centering
\includegraphics[scale=0.3]{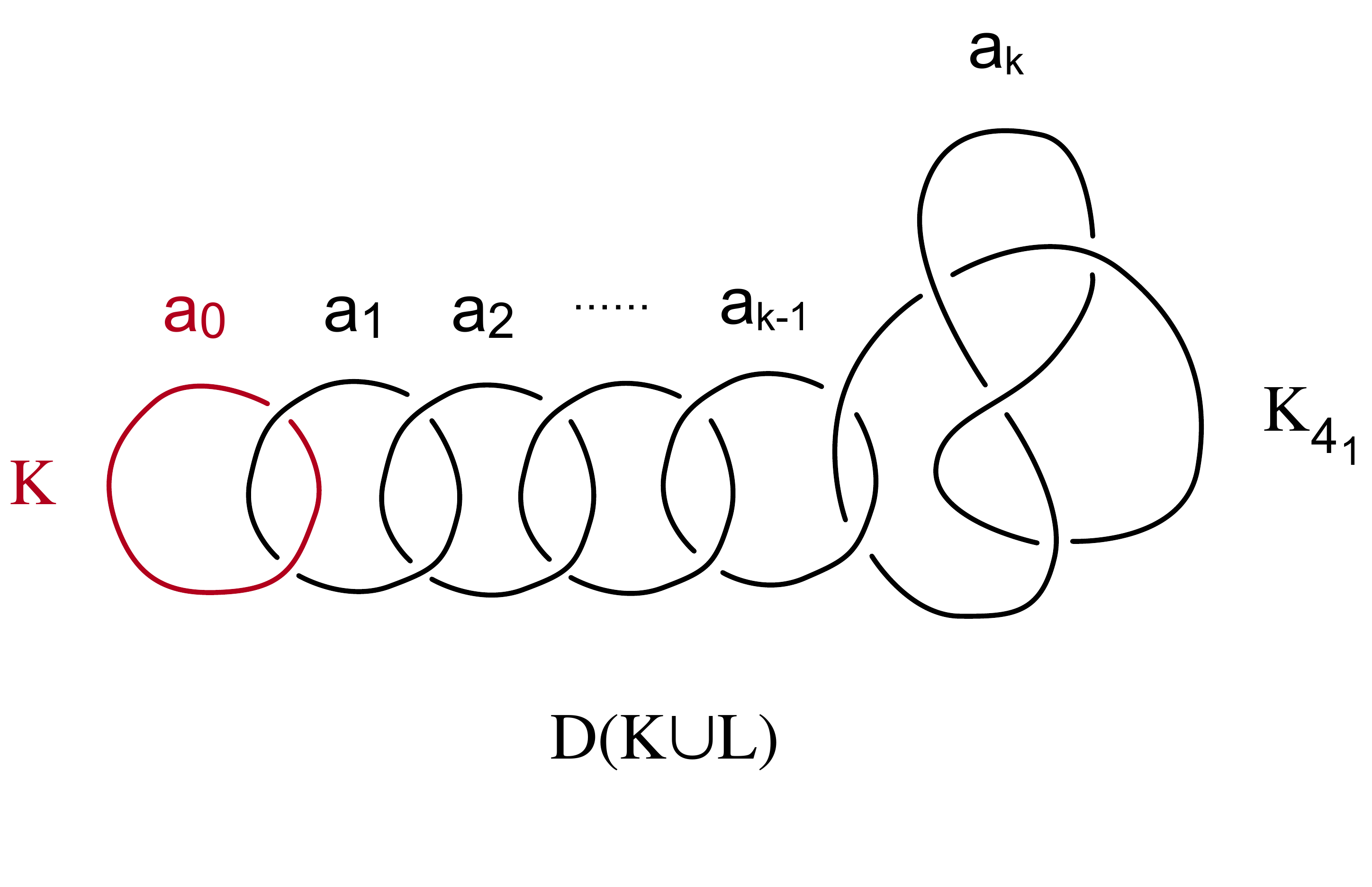}
\caption{The Kirby diagram of $(M,K)$}
\label{link}
\end{figure}

\begin{proposition}\label{formula}  For an odd integer $r\geqslant 3,$ the $r$-th relative Reshetikin-Turaev invariant $\mathrm{RT}_r(M,K,m_0)$ of the triple $(M,K,m_0)$ at the root $q=e^{\frac{2\pi i}{r}}$ can be computed as
\begin{equation*}
\begin{split} 
\mathrm{RT}_r(M,K,m_0)=\kappa_r\sum_{m_1,\dots, m_k=-\frac{r-2}{2}}^{\frac{r-2}{2}}\sum_{m=\max\{-m_k,m_k\}}^{\frac{r-2}{2}}\Big(g^+_r(m_1,\dots,m_{k}, m)+g^-_r(m_1,\dots,m_{k}, m)\Big),
\end{split}
\end{equation*}
where 
$$\kappa_r=\frac{2^{k-3}}{{\sqrt {r^{k+1}}}}\Big(\sin\frac{2\pi}{r}\Big)^{k-1}e^{\big(3\sum_{i=0}^ka_i+\sigma(L)+2k-2\big)\frac{r\pi i}{4}+\big(-\sum_{i=0}^ka_i(1+\frac{1}{r})+\frac{3\sigma(L)}{r}+\frac{\sigma(L)}{4}\big)\pi i}$$
and
$$g^\pm_r(m_1,\dots,m_{k}, m)=\epsilon\Big(\frac{2\pi m_k}{r},\frac{2\pi m}{r}\Big)e^{-\frac{2\pi m_k i}{r}+\frac{r}{4\pi i}V^\pm_r
\Big(\frac{2\pi m_1}{r},\dots,\frac{2\pi m_k}{r},\frac{2\pi m}{r}\Big)}$$
with $\epsilon$ and $V_r$ defined as follows. Let 
$$x_0=\frac{2\pi}{r}\Big(\frac{r-2}{2}-m_0\Big)=\pi-\frac{2\pi}{r}-\frac{2\pi m_0}{r}.$$
\begin{enumerate}[(1)]
\item If both $0<y\pm x_k< \pi,$ then $\epsilon(x_k,y)=2$ and
$$V^\pm_r(x_1,\dots,x_k,y)=\pm 2 x_0 x_1-\sum_{i=0}^{k}a_ix_i^2-\sum_{i=1}^{k-1}2x_ix_{i+1}-2\pi x_k+4x_ky-\varphi_r\Big(\pi-y-x_k-\frac{\pi}{r}\Big)+\varphi_r\Big(y-x_k+\frac{\pi}{r}\Big).$$
\item If $0< y+x_k<\pi$ and $\pi < y-x_k < 2\pi,$ then $\epsilon(x_k,y)=1$ and
$$V^\pm_r(x_1,\dots,x_k,y)=\pm 2 x_0 x_1-\sum_{i=0}^{k}a_ix_i^2-\sum_{i=1}^{k-1}2x_ix_{i+1}-2\pi x_k+4x_ky-\varphi_r\Big(\pi-y-x_k-\frac{\pi}{r}\Big)+\varphi_r\Big(y-x_k-\pi+\frac{\pi}{r}\Big).$$
\item If $\pi < y+x_k < 2\pi$ and $0 < y-x_k < \pi,$ then $\epsilon(x_k,y)=1$ and
$$V^\pm_r(x_1,\dots,x_k,y)=\pm 2 x_0 x_1-\sum_{i=0}^{k}a_ix_i^2-\sum_{i=1}^{k-1}2x_ix_{i+1}-2\pi x_k+4x_ky-\varphi_r\Big(2\pi-y-x_k-\frac{\pi}{r}\Big)+\varphi_r\Big(y-x_k+\frac{\pi}{r}\Big).$$
\end{enumerate}
\end{proposition}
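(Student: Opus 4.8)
The plan is to obtain the formula by a direct computation, starting from the surgery formula (\ref{RT}) applied to the Kirby diagram of $(M,K)$ in Figure \ref{link}.

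First I would substitute that diagram into (\ref{RT}). Writing $L=L_1\cup\dots\cup L_k$ for the surgery link produced by the continued-fraction construction --- so that $L_k$ is (a diagram of) the figure-$8$ knot $K_{4_1}$ with framing $a_k$, the components $L_1,\dots,L_{k-1}$ form an unknotted chain, and $K$ is the $m_0$-colored meridian linking the end component $L_1$ --- we get $\mathrm{RT}_r(M,K,m_0)=\mu_r\,\langle e_{m_0},\Omega_r,\dots,\Omega_r\rangle_{D(K\cup L)}\,\langle\Omega_r\rangle_{U_+}^{-\sigma(L)}$. Expanding each Kirby coloring by $\Omega_r=\mu_r\sum_n(-1)^n[n+1]e_n$ turns the bracket into a $k$-fold sum over colors $n_1,\dots,n_k$ of the components of $L$. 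The prefactor $\langle\Omega_r\rangle_{U_+}$ is a Gauss sum with an explicit value; it, together with the various powers of $\mu_r$, I would absorb into $\kappa_r$.

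Next I would evaluate the resulting colored link. The framing $a_i$ on the $i$-th component contributes its twist coefficient, a Gaussian factor of the form $(-1)^{a_in_i}q^{\frac{a_i}{2}(n_i^2+2n_i)}$; after recentering the indices to $\{-\frac{r-2}{2},\dots,\frac{r-2}{2}\}$ and setting $x_i=\frac{2\pi m_i}{r}$ this produces the quadratic part $-\sum_{i=0}^k a_ix_i^2$ of $V_r^\pm$, the $i=0$ term being the twist on $K$ with $x_0=\pi-\frac{2\pi}{r}-\frac{2\pi m_0}{r}$. Each Hopf-linking --- the $k-1$ links along the chain, and the link of $K$ with $L_1$ --- is computed by the encircling formula for the Kauffman bracket and produces a quantum integer which, after recentering, is a trigonometric function of the colors. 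The chain links give cosines, which are even in the relevant internal variable; using this evenness one replaces each such cosine by a single exponential at the cost of a factor of $2$ (absorbed into $\kappa_r$ and into $\epsilon$), and this is the source of the cross terms $-\sum_{i=1}^{k-1}2x_ix_{i+1}$. The link of $K$ with $L_1$, however, gives the quantum integer $[(m_0+1)(m_1+1)]$, which after recentering is proportional to $\sin((m_0+1)x_1)$ --- odd, not even, in $m_1$ --- so it does not collapse; splitting it into its two exponential monomials $e^{\pm i(m_0+1)x_1}$ yields exactly the two terms $\pm2x_0x_1$ in $V_r^\pm$, i.e.\ $g_r^+$ versus $g_r^-$. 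Finally, the figure-$8$ component $L_k$ is handled by inserting Habiro's cyclotomic (single-sum) formula for the colored Jones function of $K_{4_1}$, a sum over an index $n$ of products $\prod_{j=1}^n(\cdots)$: this is the origin of the extra summation variable, which after recentering becomes $m$ with $y=\frac{2\pi m}{r}$ and range $\max\{-m_k,m_k\}\le m\le\frac{r-2}{2}$ (the bound and its symmetrization coming from the range of Habiro's sum together with the $N\leftrightarrow-N$ symmetry of the colored Jones function). Writing Habiro's products as ratios of $q$-factorials $(q)_{(\cdot)}$ and applying Lemma \ref{factorial} turns them into the exponential of the two $\varphi_r$-summands in $V_r^\pm$, plus the remaining polynomial terms $-2\pi x_k+4x_ky$.

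The last point is the piecewise definition. Since $\varphi_r$ is only primitively defined on the strip $-\frac{\pi}{r}<\mathrm{Re}z<\pi+\frac{\pi}{r}$ --- where Lemma \ref{converge} makes it comparable to the dilogarithm --- and has poles beyond it, whenever an argument of $\varphi_r$ of the form $y\pm x_k$ (up to the $\frac{\pi}{r}$-shifts and the reflection $z\mapsto\pi-z$ used to bring Habiro's factors into position) would land in $(\pi,2\pi)$, I would use Lemma \ref{fact}(2) / Equation (\ref{extension}) to pull it back by $\pi$ into $(0,\pi)$; this both shifts the $\varphi_r$-argument and changes the numerical prefactor, giving $\epsilon=2$ in the generic case (1) and $\epsilon=1$ in the boundary cases (2) and (3). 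Sorting out which of $y+x_k$, $y-x_k$ lies in which interval yields precisely the three cases, and collecting everything independent of $m_1,\dots,m_k,m$ into $\kappa_r$ finishes the computation. The main obstacle is bookkeeping --- keeping exact track of all the signs $(-1)^{(\cdot)}$, roots of unity, and powers of $\sqrt r$ and $2$ through the Kirby expansion, the twist and encircling formulas, Habiro's formula, and the conversion via Lemma \ref{factorial} --- together with organizing the $\varphi_r$ case analysis so that the three piecewise expressions are mutually consistent; pinning down $\kappa_r$ exactly is the delicate endpoint.
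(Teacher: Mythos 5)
Your plan is essentially the paper's proof: apply the surgery formula (\ref{RT}) to the Kirby diagram of Figure \ref{link}, expand $\Omega_r$, use the twist and encircling eigenvalues together with Habiro's formula for the figure-eight component, recenter the indices, and convert the $q$-factorial ratios into $\varphi_r$ via Lemma \ref{factorial}, using Lemma \ref{fact}(2) in the subregions where arguments fall in $(\pi,2\pi)$. That is exactly the structure of the paper's argument, so the outline is sound.

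The one place where your stated mechanism is off is the collapse of the chain quantum integers. You say the chain links contribute ``cosines, which are even in the relevant internal variable,'' and that ``using this evenness one replaces each such cosine by a single exponential at the cost of a factor of $2$.'' Evenness of $\cos$ alone does not let you drop a half; what is really used (and what the paper proves via the auxiliary sums $S^{(\epsilon_1,\dots,\epsilon_{k-1})}$) is a change-of-variables symmetry: the substitution $m_j\mapsto r-2-m_j$ for $j\le i$ flips the sign of the cross term $(m_i+1)(m_{i+1}+1)$ while preserving the rest of the summand, so the ``$+$'' and ``$-$'' exponential halves give the same total sum over $\{0,\dots,r-2\}^{k-1}$ and may be replaced by $2$ copies of one half. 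Correspondingly, your explanation of why the $K$--$L_1$ factor does not collapse (``sine, odd, not even'') is not the real reason — after recentering both indices it has the same cosine form as the chain factors; what obstructs the symmetrization is simply that $m_0$ is a fixed color and not a summation index, so the substitution argument does not apply and both exponentials $e^{\pm i(\cdot)}$ must be retained, producing $g_r^+$ and $g_r^-$. Once you replace the heuristic evenness argument with the paper's change-of-variable lemma (the identity $S^{(0,\dots,0,\epsilon_{i+1},\dots)}=S^{(0,\dots,0,1,\epsilon_{i+1},\dots)}$ after summation), the rest of your outline goes through as described.
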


\begin{proof} A  direct computation shows that
$$\langle\mu_r \omega_r\rangle _{U_+}=e^{\big(-\frac{3}{r}-\frac{r+1}{4}\big)\pi i}.$$
Let 
$$\kappa_r'=\mu_r^{k+1}\langle\mu_r \omega_r\rangle _{U_+}^{-\sigma(L)}=\bigg(\frac{\sqrt2\sin\frac{2\pi}{r}}{\sqrt r}\bigg)^{k+1}e^{-\sigma(L)\big(-\frac{3}{r}-\frac{r+1}{4}\big)\pi i}.$$
Then by (\ref{RT}), we have
\begin{equation*}
\begin{split}
\mathrm{RT}_r(M)&=\kappa_r' \langle e_{m_0}, \omega_r,\dots,\omega_r\rangle_{D(K\cup L)}  \\
&=\kappa_r'\sum_{m_1,\dots, m_k=0}^{r-2}(-1)^{m_0+m_k+\sum_{i=0}^ka_im_i}q^{\sum_{i=0}^n\frac{a_im_i(m_i+2)}{2}}\prod_{i=0}^{k-1}[(m_i+1)(m_{i+1}+1)]\langle e_{m_k} \rangle_{D(K_{4_1})},
\end{split}
\end{equation*}
where the second equality comes from the fact that $e_n$ is an eigenvector of the positive and the negative twist operator of eigenvalue $(-1)^nq^{\pm\frac{n(n+2)}{2}}$, and is also an eigenvector of the circle operator $c(e_m)$ (defined by enclosing $e_n$ by $e_m$) of eigenvalue $(-1)^m\frac{[(m+1)(n+1)]}{[n+1]}.$
By Habiro's formula\,\cite{Ha} (see also \cite{Ma} for a skein theoretical computation) 
\begin{equation*}
\begin{split}
\langle e_n \rangle_{D(K_{4_1})}=\frac{(-1)^{n+1}}{\{1\}}\sum_{m=0}^{\min\{n,r-2-n\}}q^{-2(n+1)(m+\frac{1}{2})}\frac{(q)_{n+1+m}}{(q)_{n-m}},
\end{split}
\end{equation*}
where $\{n\}=q^{n}-q^{-n}$ and $(q)_n=\prod_{k=1}^n(1-q^{2k}).$

Then
\begin{equation}\label{sum}
\begin{split}
\mathrm{RT}_r(M)=&\frac{(-1)^{m_0+1}}{\{1\}}\kappa_r'\sum_{m_1,\dots, m_k=0}^{r-2}\sum_{m=0}^{\min\{m_k,r-2-m_k\}}(-1)^{\sum_{i=0}^ka_im_i}q^{\sum_{i=1}^n\frac{a_im_i(m_i+2)}{2}-2(m_k+1)(m+\frac{1}{2})}\\
&\quad\quad\quad\quad\quad\quad\quad\quad\quad\quad\quad\quad\prod_{i=0}^{k-1}[(m_i+1)(m_{i+1}+1)]\frac{(q)_{m_k+1+m}}{(q)_{m_k-m}}.\\
=&\frac{(-1)^{m_0+1}2^{k-1}}{\{1\}^2}\kappa_r'\sum_{m_1,\dots, m_k=0}^{r-2}\sum_{m=0}^{\min\{m_k,r-2-m_k\}}\Big(q^{(m_0+1)(m_1+1)}-q^{-(m_0+1)(m_1+1)}\Big)\\
&\quad\quad(-1)^{\sum_{i=0}^ka_im_i}q^{\sum_{i=0}^k\frac{a_im_i(m_i+2)}{2}+\sum_{i=1}^{k-1}(m_i+1)(m_{i+1}+1)-2(m_k+1)(m+\frac{1}{2})}\frac{(q)_{m_k+1+m}}{(q)_{m_k-m}},\\
\end{split}
\end{equation}
where the last equality comes from the following computation. For each $i\in\{1,\dots, k-1\}$ let $\epsilon_i\in\{0,1\},$ and let 
\begin{equation*}
\begin{split}
&S^{(\epsilon_1,\dots,\epsilon_{k-1})}(m_1,\dots,m_{k-1})\\
=&\frac{(-1)^{m_0+1}}{\{1\}^2}\kappa_r'\sum_{m_k=0}^{r-2}\sum_{m=0}^{\min\{m_k,r-2-m_k\}}\Big(q^{(m_0+1)(m_1+1)}-q^{-(m_0+1)(m_1+1)}\Big)\\
&(-1)^{\sum_{i=0}^ka_im_i+\sum_{i=1}^{k-1}\epsilon_i}q^{\sum_{i=0}^k\frac{a_im_i(m_i+2)}{2}+\sum_{i=1}^{k-1}(-1)^{\epsilon_i}(m_i+1)(m_{i+1}+1)-2(m_k+1)(m+\frac{1}{2})}\frac{(q)_{m_k+1+m}}{(q)_{m_k-m}}.
\end{split}
\end{equation*}
Then
$$\mathrm{RT}_r(M)=\sum_{m_1,\dots, m_{k-1}=0}^{r-2}\sum_{\epsilon_1,\dots,\epsilon_{k-1}\in\{0,1\}}S^{(\epsilon_1,\dots,\epsilon_{k-1})}(m_1,\dots,m_{k-1}).$$
For each $i\in\{1,\dots,, k-1\},$ a direct computation shows that
\begin{equation*}
\begin{split}
S^{(0,\dots,0,\epsilon_{i+1},\dots,\epsilon_{k-1})}&(m_1,\dots,m_{i+1},\dots,m_{k-1})\\
&=S^{(0,\dots,0,1,\epsilon_{i+1},\dots,\epsilon_{k-1})}(r-2-m_1,\dots, r-2-m_i,m_{i+1},\dots,m_{k-1}).
\end{split}
\end{equation*}
Then we have
$$\sum_{m_1,\dots, m_{k-1}=0}^{r-2}S^{(0,\dots,0,\epsilon_{i+1},\dots,\epsilon_{k-1})}(m_1,\dots,m_{k-1})=\sum_{m_1,\dots, m_{k-1}=0}^{r-2}S^{(0,\dots,0,1,\epsilon_{i+1},\dots,\epsilon_{k-1})}(m_1,\dots,m_{k-1}),$$
and hence
$$\mathrm{RT}_r(M)=2^{k-1}\sum_{m_1,\dots, m_{k-1}=0}^{r-2}S^{(0,\dots,0)}(m_1,\dots,m_{k-1}),$$
which proves (\ref{sum}).

Now let $m'=\frac{r-2}{2}-m,$ and for each $i\in\{0,1,\dots, k\}$ let $m_i'=\frac{r-2}{2}-m_i.$ Then 
\begin{equation*}
\begin{split}
\mathrm{RT}_r(M)={\kappa_r} \sum_{m_1',\dots, m_k'=-\frac{r-2}{2}}^{\frac{r-2}{2}}&\sum_{m'=\max\{-m_k',m_k'\}}^{\frac{r-2}{2}}\Big(q^{m_0'm_1'}+q^{-m_0'm_1'}\Big)\\
&(-1)^{m_k'}q^{\sum_{i=0}^k\frac{a_im_i'^2}{2}+\sum_{i=1}^{k-1}{m_i'm_{i+1}'}-2m_k'm'-{m_k'}}\frac{(q)_{r-1-m'-m_k'}}{(q)_{m'-m_k'}},\\
\end{split}
\end{equation*}
and by Lemma \ref{fact}, 
$$\mathrm{RT}_r(M)=\kappa_r\sum_{m_1',\dots, m_k'=-\frac{r-2}{2}}^{\frac{r-2}{2}}\sum_{m'=\max\{-m_k',m_k'\}}^{\frac{r-2}{2}}\Big(g^+_r(m_1',\dots,m_{k}', m')+g^-_r(m_1',\dots,m_{k}', m')\Big).$$
\end{proof}


\section{Poisson summation formula}

We notice that the summation in Proposition \ref{formula} is finite, and to use the Poisson summation formula, we need an infinite sum over integral points. To this end, we consider the following regions and a bump function over them.

Let $x_i=\frac{2\pi m_i}{r}$ for each $i\in\{1,\dots, k\},$ and let $y=\frac{2\pi m}{r}.$ For $\delta\geqslant 0,$ we let 
$$D_{\delta}=\Big\{(x,y)\in \mathbb R^2\ \Big|\ \delta < y+x <\frac{\pi}{2}-\delta, \delta<y-x<\frac{\pi}{2}-\delta\Big\},$$
$$D'_{\delta}=\Big\{(x,y)\in \mathbb R^2\ \Big|\ \delta< y+x <\frac{\pi}{2}-\delta, \pi+\delta< y-x < \frac{3\pi}{2}-\delta \Big\}$$
and
$$D''_{\delta}=\Big\{(x,y)\in \mathbb R^2\ \Big|\ \pi+\delta <  y+x<\frac{3\pi}{2}-\delta, \delta<y-x <\frac{\pi}{2}-\delta\Big\},$$
and let  $\mathcal D_\delta=D_\delta\cup D'_\delta\cup D''_\delta.$ If $\delta=0,$ we omit the subscript and write
$D=D_0,$ $D'=D'_0,$ $D''=D''_0$ and $\mathcal D=D\cup D'\cup D''.$

\begin{figure}[htbp]
\centering
\includegraphics[scale=0.7]{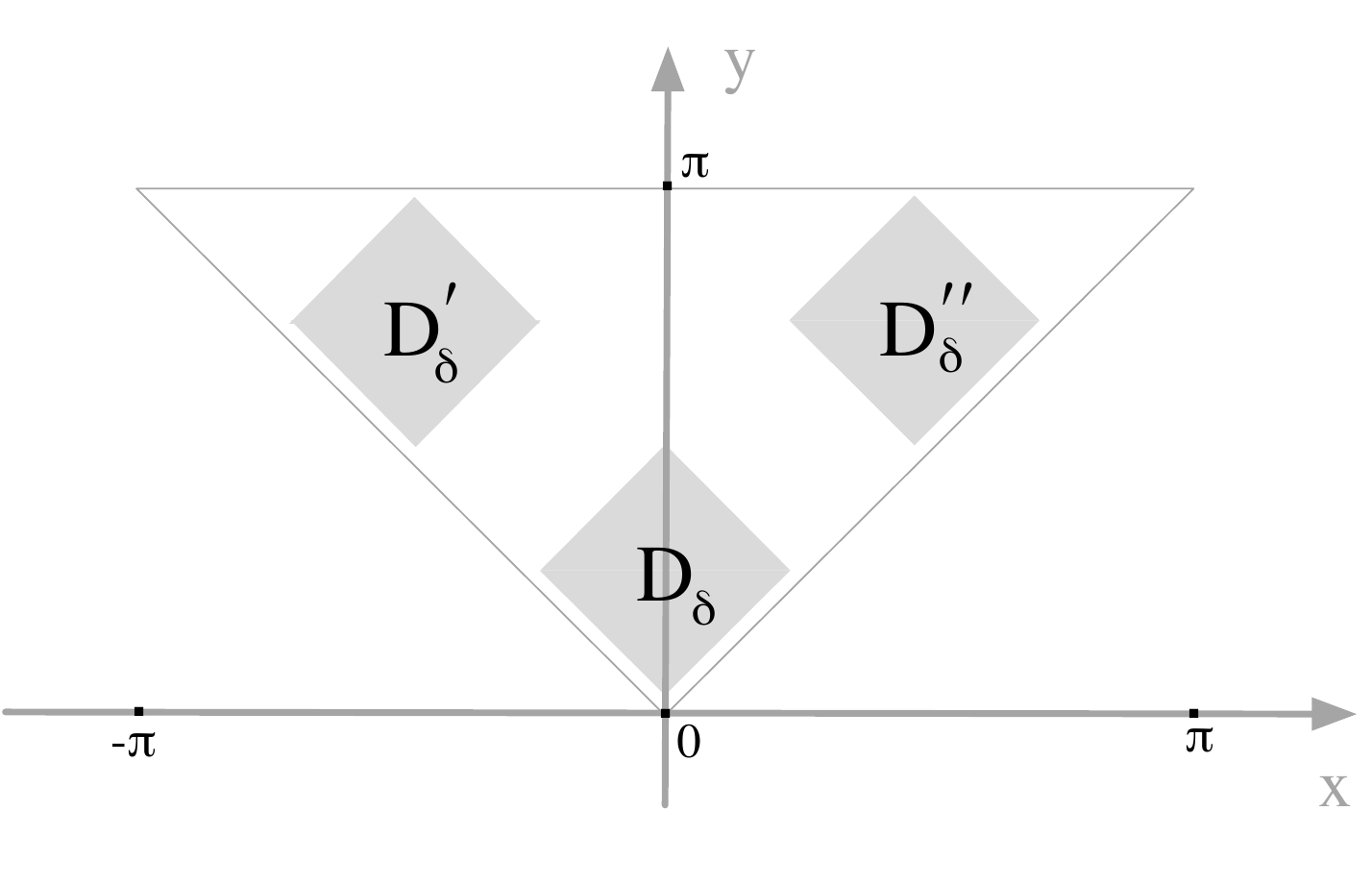}
\caption{Regions $D_\delta,$ $D_\delta'$ and $D_\delta''$}
\label{Delta} 
\end{figure}

For a sufficiently small $\delta>0,$ we consider a $C^{\infty}$-smooth bump function $\psi$ on $\mathbb R^{k+1}$ such that 
\begin{equation*}
\left \{\begin{array}{rl}
\psi(x_1,\dots,x_k,y)=1, & (x_1,\dots,x_k,y)\in[-\pi+\frac{2\pi}{r},\pi-\frac{2\pi}{r}]^{k-1}\times\overline{\mathcal D_{\frac{\delta}{2}}}\\
0<\psi(x_1,\dots,x_k,y)<1, & (x_1,\dots,x_k,y)\in(-\pi,\pi)^{k-1}\times\mathcal D\setminus [-\pi+\frac{2\pi}{r},\pi-\frac{2\pi}{r}]^{k-1}\times\overline{\mathcal D_{\frac{\delta}{2}}}\\
\psi(x_1,\dots,x_k,y)=0, & (x_1,\dots,x_k,y)\notin(-\pi,\pi)^{k-1}\times\mathcal D,\\
\end{array}\right.
\end{equation*}
and let 
$$f^\pm_r(m_1,\dots,m_k,m)=\psi\Big(\frac{2\pi m_1}{r},\dots,\frac{2\pi m_k}{r},\frac{2\pi m}{r}\Big)g^\pm_r(m_1,\dots, m_k,m).$$
Then by Proposition \ref{bound}, we have
\begin{equation}\label{halfinteger}
\mathrm{RT}_r(M)=\kappa_r \sum_{(m_1,\dots,m_k,m)\in(\mathbb Z+\frac{1}{2})^{k+1}}\Big(f^+_r(m_1,\dots,m_k,m)+f^-_r(m_1,\dots,m_k,m)\Big)+\text{error term}.
\end{equation}

Since $f^\pm_r$ is $C^{\infty}$-smooth and equals zero out of $D,$ it is in the Schwartz space on $\mathbb R^{k+1}.$ Recall that by the Poisson Summation Formula (see e.g. \cite[Theorem 3.1]{SS}), for any function $f$ in the Schwartz space on $\mathbb R^{k},$
$$\sum_{(m_1,\dots,m_k)\in\mathbb Z^{k}}f(m_1,\dots,m_k)=\sum_{(n_1,\dots,n_{k})\in\mathbb Z^{k}}\hat f(n_1,\dots,n_{k}),$$
where  $\hat f(n_1,\dots,n_{k})$ is the $(n_1,\dots,n_{k})$-th Fourier coefficient of $f$ defined by
$$\widehat {f^\pm}(n_1,\dots,n_{k})=\int_{\mathbb R^{k}}f^\pm(m_1,\dots, m_k)e^{\sum_{j=1}^k2\pi in_jm_j}dm_1\dots dm_k.$$
As a consequence, we have

\begin{proposition}\label{Poisson}
$$\mathrm{RT}_r(M)=\kappa_r \sum_{(n_1,\dots,n_{k}, n)\in\mathbb Z^{k+1}}\Big(\widehat {f^+}(n_1,\dots,n_{k})+\widehat {f^-}(n_1,\dots,n_{k})\Big)+\text{error term},$$
where
\begin{equation*}
\begin{split}
\widehat {f^\pm_r}(n_1,\dots,n_{k}, n)=(-1)^{\sum_{i=1}^kn_i+n}\Big(\frac{r}{2\pi}\Big)^{k+1}&\int_{(-\pi,\pi)^{k-1}\times \mathcal D}\psi(x_1,\dots, x_k,y)\epsilon(x_k,y)\\
&e^{-x_ki+\frac{r}{4\pi i}\Big(V^\pm_r(x_1,\dots,x_k,y)-4\pi\sum_{i=1}^{k}n_ix_k-4\pi n y\Big)}dx_1\dots dx_kdy.
\end{split}
\end{equation*}
\end{proposition}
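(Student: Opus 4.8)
The plan is to obtain the Fourier coefficient formula in Proposition \ref{Poisson} by combining Equation (\ref{halfinteger}) with a version of the Poisson summation formula adapted to summing over the shifted lattice $(\mathbb Z+\frac12)^{k+1}$ rather than $\mathbb Z^{k+1}$. First I would record the elementary fact that if $g$ is a Schwartz function on $\mathbb R^{k+1}$ and $f(x)=g(x)e^{2\pi i(\frac12,\dots,\frac12)\cdot x}=(-1)^{\,\text{(sum of coordinates)}}g(x)$ formally, then
$$\sum_{\mathbf m\in(\mathbb Z+\frac12)^{k+1}}g(\mathbf m)=\sum_{\mathbf m\in\mathbb Z^{k+1}}g\Big(\mathbf m+\tfrac12(1,\dots,1)\Big)=\sum_{\mathbf n\in\mathbb Z^{k+1}}(-1)^{\sum n_i}\,\widehat g(\mathbf n),$$
where the last equality is the standard Poisson summation formula (\cite[Theorem 3.1]{SS}) applied to the translate of $g$, using that the Fourier coefficient of $g(\cdot+\mathbf a)$ is $e^{2\pi i \mathbf n\cdot\mathbf a}\widehat g(\mathbf n)$ and $e^{\pi i \sum n_i}=(-1)^{\sum n_i}$. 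I would verify that $f^\pm_r$, being $C^\infty$ and compactly supported (it vanishes off the bounded set $(-\pi,\pi)^{k-1}\times\mathcal D$), lies in the Schwartz space, so the hypotheses of the Poisson summation formula are satisfied.

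Next I would apply this to $g=f^\pm_r$ for each sign, substitute into (\ref{halfinteger}), and collect the two sums, which yields
$$\mathrm{RT}_r(M)=\kappa_r\sum_{(n_1,\dots,n_k,n)\in\mathbb Z^{k+1}}\Big(\widehat{f^+_r}(n_1,\dots,n_k,n)+\widehat{f^-_r}(n_1,\dots,n_k,n)\Big)+\text{error term},$$
with the sign $(-1)^{\sum_{i=1}^k n_i+n}$ absorbed into the definition of $\widehat{f^\pm_r}$ as displayed. It then remains to write out $\widehat{f^\pm_r}$ explicitly. Starting from the definition of the Fourier coefficient over $\mathbb R^{k+1}$ and unwinding $f^\pm_r(m_1,\dots,m_k,m)=\psi g^\pm_r$ together with the formula for $g^\pm_r$ from Proposition \ref{formula}, I would perform the change of variables $x_i=\frac{2\pi m_i}{r}$, $y=\frac{2\pi m}{r}$, which contributes the Jacobian factor $\big(\frac{r}{2\pi}\big)^{k+1}$ and turns the exponential $e^{\sum 2\pi i n_j m_j}$ (plus the $n$-term) into $e^{-\frac{r}{4\pi i}(4\pi\sum n_i x_k+4\pi n y)}$ after matching signs with the convention $q=e^{2\pi i/r}$; the factor $\epsilon(x_k,y)e^{-x_k i}$ comes directly from the definition of $g^\pm_r$. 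Collecting everything gives precisely the stated integral over $(-\pi,\pi)^{k-1}\times\mathcal D$, the support of $\psi$.

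The only genuinely delicate points, rather than true obstacles, are bookkeeping ones: keeping the sign $(-1)^{\sum n_i+n}$ and the various $i$'s consistent through the change of variables, and making sure the index ranges in $g^\pm_r$ (which involve $m_k$ through $\epsilon$ and the case division in Proposition \ref{formula}) are correctly encoded by the single smooth cutoff $\psi$ supported on $\mathcal D=D\cup D'\cup D''$, so that the piecewise definition of $V^\pm_r$ glues into one integrand. I would also double-check the half-integer shift: since in Proposition \ref{formula} the invariant is a sum over $m_i'\in\{-\frac{r-2}{2},\dots,\frac{r-2}{2}\}$ and $r$ is odd, these $m_i'$ indeed range over $\mathbb Z+\frac12$, which is exactly what licenses the shifted-lattice Poisson summation and hence the sign $(-1)^{\sum n_i+n}$; this is the content already packaged in (\ref{halfinteger}) via Proposition \ref{bound}. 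With these checks in place the proposition follows immediately, so I expect the write-up to be short.
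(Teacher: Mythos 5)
Your proposal is correct and follows essentially the same route as the paper: shift the half-integer lattice to $\mathbb Z^{k+1}$, apply the standard Poisson summation formula (\cite[Theorem 3.1]{SS}) to the translated function to produce the sign $(-1)^{\sum n_i+n}$, and change variables $x_i=\frac{2\pi m_i}{r}$, $y=\frac{2\pi m}{r}$ to obtain the Jacobian $(r/2\pi)^{k+1}$ and the displayed integral over the support of $\psi$. The only cosmetic difference is that you state the shifted-lattice Poisson identity as a standalone fact and then apply it, whereas the paper carries out the substitution $m_i'=m_i+\tfrac12$ inline; the bookkeeping and the resulting formula are identical.
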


\begin{proof} To apply the Poisson Summation Formula, we need to make the sum in (\ref{halfinteger}) over integers instead of half-integers. To do this, we let $m_i'=m_i+\frac{1}{2}$ for $i=1,\dots, k$ and let $m'=m+\frac{1}{2}.$ Then 
$$\sum_{(m_1,\dots,m_k,m)\in(\mathbb Z+\frac{1}{2})^{k+1}}f^\pm_r(m_1,\dots,m_k,m)=\sum_{(m'_1,\dots,m'_k,m')\in\mathbb Z^{k+1}}f^\pm_r\Big(m'_1-\frac{1}{2},\dots,m'_k-\frac{1}{2},m'-\frac{1}{2}\Big).$$
Now by the Poisson Summation Formula, the right hand side equals
$$\sum_{(n_1,\dots,n_k,n)\in\mathbb Z^{k+1}}\int_{\mathbb R^{k+1}}f^\pm_r\Big(m'_1-\frac{1}{2},\dots,m'_k-\frac{1}{2},m'-\frac{1}{2}\Big)e^{\sum_{j=1}^k2\pi i n_jm'_j+2\pi inm'}dm'_1\dots dm'_kdm'.$$
Finally, using the change of variable $x_i=\frac{2\pi m_i}{r}=\frac{2\pi m'_i}{r}-\frac{\pi}{r}$ for $i=1,\dots, k$ and $y=\frac{2\pi m}{r}=\frac{2\pi m'}{r}-\frac{\pi}{r},$ we get the result.
\end{proof}

We will give a preliminary estimate of the Fourier coefficients in Section \ref{preest} simplifying them to a $2$-dimensional integral which will be further estimated in Sections \ref{leading} and \ref{other}, and estimate the error term in Proposition \ref{Poisson} as follows. 

\begin{proposition}\label{bound} For $\epsilon>0,$ we can choose a sufficiently small $\delta>0$ so that if one of 
$m+m_k$ and $m-m_k$ is not in $\big(\frac{\delta r}{2\pi} , \frac{r}{4}-\frac{\delta r}{2\pi} \big)\cup\big(\frac{r}{2}+\frac{\delta r}{2\pi} , \frac{3r}{4}-\frac{\delta r}{2\pi} \big),$
then
$$|g_r(m_1,\dots,m_{k}, m)|<O\Big(e^{\frac{r}{4\pi}\big(\frac{1}{2}\mathrm{Vol}(\mathrm S^3\setminus K_{4_1})+\epsilon\big)}\Big).$$
As a consequence, the error term in Proposition \ref{Poisson} is of order at most $O\Big(e^{\frac{r}{4\pi}\big(\frac{1}{2}\mathrm{Vol}(\mathrm S^3\setminus K_{4_1})+\epsilon\big)}\Big).$

\end{proposition}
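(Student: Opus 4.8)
The plan is to bound $|g_r(m_1,\dots,m_k,m)|$ by a sum of three contributions: the ``polynomial'' exponential prefactor, the quadratic form $\sum a_i x_i^2 + \sum 2x_ix_{i+1}$, and the quantum dilogarithm terms $\varphi_r$. First I would use Lemma \ref{converge}(1) to replace each $\varphi_r$ appearing in $V^\pm_r$ by the dilogarithm $\mathrm{Li}_2$ of the appropriate argument, up to an error $O(1/r^2)$ that is harmless after multiplication by $r/(4\pi)$. This turns $\log|g_r|$ into $\frac{r}{4\pi}$ times a smooth function $\Phi^\pm$ on $\mathcal D$ (plus lower order terms), where the real part is
$$
\mathrm{Re}\, \Phi^\pm = -\mathrm{Im}\Big(\mathrm{Li}_2(e^{2i(\dots)})\Big) + (\text{terms involving } \Lambda),
$$
using (\ref{dilogLob}) to express the relevant imaginary parts via the Lobachevsky function. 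The key point is that the quadratic form $-\sum a_i x_i^2 - \sum 2x_ix_{i+1}$ is purely real, hence contributes $0$ to $\mathrm{Re}(\frac{r}{4\pi i} V^\pm_r)$; likewise $\pm 2x_0x_1$, $-2\pi x_k$ and $4x_ky$ are real and contribute nothing. So only the $\varphi_r$ (hence $\Lambda$) terms and the $e^{-x_ki}$ prefactor survive in the growth rate, and the growth rate of $|g_r|$ is governed by a function of $(x_k,y)$ alone — precisely the $2$-dimensional reduction alluded to after the statement. This is why the hypothesis only involves $m\pm m_k$.

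Next I would analyze the function $U(x_k,y) := \mathrm{Re}\,\Phi^\pm(x_k,y)$ on $\overline{\mathcal D}$, whose relevant building block is, up to signs and the cases in Proposition \ref{formula}, of the form $2\Lambda(\alpha)+2\Lambda(\beta)$ with $\alpha,\beta$ linear in $y\pm x_k$. Writing $u = y+x_k$ and $v = y-x_k$, the function decouples as $U = h(u) + h(v)$ for an explicit function $h$ built from $\Lambda$, maximized at the value corresponding to the regular ideal octahedron, i.e. $\sup h = \frac{1}{2}\mathrm{Vol}(S^3\setminus K_{4_1}) \cdot (\text{normalization})$; more precisely the total supremum of $U$ over $\mathcal D$ equals $\frac{1}{4\pi}$ times nothing and the relevant bound is $\sup_{\mathcal D} U = \mathrm{Vol}(S^3\setminus K_{4_1})$. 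The content of the proposition is that when $u$ or $v$ is pushed away from the critical value — namely out of $(\delta, \frac{\pi}{2}-\delta)\cup(\pi+\delta, \frac{3\pi}{2}-\delta)$ in the $(x_k,y)$-variables, which translates to $m\pm m_k$ out of $(\frac{\delta r}{2\pi}, \frac{r}{4}-\frac{\delta r}{2\pi})\cup(\frac{r}{2}+\frac{\delta r}{2\pi}, \frac{3r}{4}-\frac{\delta r}{2\pi})$ — one of $h(u)$, $h(v)$ drops strictly below its maximum, so $U < \mathrm{Vol}(S^3\setminus K_{4_1}) - c(\delta)$ for some $c(\delta)>0$. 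Since the other of $h(u),h(v)$ is at most $\frac{1}{2}\mathrm{Vol}(S^3\setminus K_{4_1})$ automatically, I expect in fact the cleaner statement that if one coordinate is bad then $U \leqslant \frac{1}{2}\mathrm{Vol}(S^3\setminus K_{4_1}) + O(\delta)$ directly: the bad coordinate contributes at most $\frac{1}{2}\mathrm{Vol}$, and the good one contributes at most $\frac{1}{2}\mathrm{Vol}$, with equality on the good side only in the interior. Then choosing $\delta$ small makes the $O(\delta)$ term smaller than $\epsilon$, and the bound $|g_r|<O(e^{\frac{r}{4\pi}(\frac12\mathrm{Vol}(S^3\setminus K_{4_1})+\epsilon)})$ follows. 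The ``error term'' in (\ref{halfinteger}), which by the setup in Section \ref{preest} consists precisely of the finitely many summands $g_r(m_1,\dots,m_k,m)$ with $m\pm m_k$ in the bad range (those cut off by the bump function $\psi$), is then a sum of $O(r^{k+1})$ terms each of that size, and polynomial factors are absorbed into $\epsilon$; this gives the consequence.

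I expect the main obstacle to be the careful estimate of $\mathrm{Re}\,\Phi^\pm$ near and on the boundary of $\mathcal D$ and at the transitions between cases (1), (2), (3) of Proposition \ref{formula}: one must check that the continuations of $\varphi_r$ used in those cases (via (\ref{fund}), (\ref{f2}), (\ref{extension})) match up so that $U$ extends to a continuous function on $\overline{\mathcal D}$ with no spurious growth from the poles of $\varphi_r$, and that the asymptotic expansion of Lemma \ref{converge} remains uniformly valid up to distance $O(1/r)$ from the boundary lines $y\pm x = 0, \frac{\pi}{2}, \pi, \frac{3\pi}{2}$ — which is exactly where the factor $e^{-x_ki}$ and the $2e^{\cdots}$ in (\ref{move}) enter and why the hypothesis excludes a $\frac{\delta r}{2\pi}$-collar. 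A secondary technical point is controlling the sum over $m_1,\dots,m_{k-1}$: these variables enter $\mathrm{Re}(\frac{r}{4\pi i}V^\pm_r)$ only through the real quadratic form and the real cross terms, so they contribute nothing to the exponential growth, but one should note that $\psi$ restricts them to a bounded box so the sum over them is genuinely $O(r^{k-1})$ terms, again absorbed into the polynomial prefactor.
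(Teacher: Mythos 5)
Your high-level strategy matches the paper's: observe that the real quadratic and linear terms in $V^\pm_r$ contribute nothing to $\log|g_r|$, so the growth rate is governed by a Lobachevsky-type function of $y\pm x_k$ alone, with each of the two terms bounded by $2\Lambda(\pi/6)=\tfrac12\mathrm{Vol}(S^3\setminus K_{4_1})$. However, the key step is not pinned down correctly. You write that ``the bad coordinate contributes at most $\tfrac12\mathrm{Vol}$ and the good one contributes at most $\tfrac12\mathrm{Vol}$'', which only yields $U\leqslant\mathrm{Vol}$, not the required $\tfrac12\mathrm{Vol}+\epsilon$. The actual reason the hypothesis gives the sharper bound is that the excluded set for $\frac{2\pi(m\pm m_k)}{r}$ — the complement of $(\delta,\tfrac\pi2-\delta)\cup(\pi+\delta,\tfrac{3\pi}2-\delta)$ — consists precisely of neighborhoods of the zeros of $\Lambda$ (at $0,\tfrac\pi2,\pi,\tfrac{3\pi}2$) together with the subintervals where $\Lambda$ is negative; on this set $\Lambda(\theta)\leqslant\max\{\Lambda(\delta),\Lambda(\tfrac\pi2-\delta)\}$, which is made $<\tfrac\epsilon4$ by choosing $\delta$ small. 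Thus the bad coordinate contributes essentially nothing (not ``at most $\tfrac12\mathrm{Vol}$''), and the sum is $<\tfrac{r}{2\pi}(\Lambda(\tfrac\pi6)+\tfrac\epsilon2)$. You need to make this explicit; as written, the inequality does not follow.

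A secondary point: the paper avoids the uniformity issue you flag (Lemma \ref{converge} degenerates near the boundary lines $y\pm x_k\in\{0,\tfrac\pi2,\pi,\tfrac{3\pi}2\}$, exactly where the hypothesis is used) by not passing through $\varphi_r\to\mathrm{Li}_2$ at all. Instead it goes back to the ratio of quantum factorials
$$|g^\pm_r(m_1,\dots,m_k,m)|=\Big|\sin\Big(\tfrac{2\pi m_1}{r}\Big)\epsilon\Big(\tfrac{2\pi m_k}{r},\tfrac{2\pi m}{r}\Big)\Big|\cdot\Big|\frac{\{r-1-m-m_k\}!}{\{m-m_k\}!}\Big|$$
and applies the discrete estimate of Lemma \ref{est}, whose $O(\log r)$ error is uniform over all $0<n<r$. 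This both bypasses your boundary-uniformity concern and removes the need to track the case distinctions (1)--(3) of Proposition \ref{formula} at the $\varphi_r$ level, since the $\Lambda$'s of the appropriate angles come out directly. Your route can probably be made to work on the slightly shrunk regions, but Lemma \ref{est} is the cleaner tool here.
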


To prove Proposition \ref{bound}, we need the following estimate, which first appeared in \cite[Proposition 8.2]{GL} for $t=e^{\frac{2\pi i}{r}},$ and for the root $t=e^{\frac{4\pi i}{r}}$ in \cite[Proposition 4.1]{DK}.

\begin{lemma}\label{est}
 For any integer $0<n<r$ and at $t=e^{\frac{4\pi i}{r}},$
 $$ \log\left|\{n\}!\right|=-\frac{r}{2\pi}\Lambda\left(\frac{2n\pi}{r}\right)+O\left(\log(r)\right).$$
\end{lemma}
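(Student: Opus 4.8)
The plan is to prove Lemma \ref{est} by combining the asymptotic expansion of the quantum dilogarithm from Lemma \ref{converge} with the explicit factorizations of $\{n\}!$ in terms of $\varphi_r$ from Lemma \ref{factorial}. Observe first that $\{n\}=q^n-q^{-n}=2i\sin\frac{2n\pi}{r}$ at $q=e^{2\pi i/r}$, whereas the statement is phrased for the factorial $\{n\}!=\prod_{j=1}^n\{j\}$ at the variable $t=q^2=e^{4\pi i/r}$; in the notation of the excerpt this is precisely the $\{n\}!$ appearing in Lemma \ref{factorial}. So the strategy is to take logarithms of the two identities in Lemma \ref{factorial}, note that the exponential prefactors ($2$ in the range $\frac{r-1}{2}\le n\le r-2$, and $1$ otherwise) and the polynomial terms $-2\pi\big(\frac{2\pi n}{r}\big)+\big(\frac{2\pi}{r}\big)^2(n^2+n)$ all contribute to $\log|\{n\}!|$ terms of order $O(\log r)$ after multiplying by the $\frac{r}{4\pi i}$ and taking real parts (the polynomial part is purely real-valued times $i$, hence contributes nothing to the modulus, and the constant $\log 2$ is absorbed into $O(\log r)$), so the entire asymptotics is governed by $\mathrm{Re}\big(\frac{r}{4\pi i}(\varphi_r(\frac{\pi}{r})-\varphi_r(z_n))\big)$ where $z_n$ is $\frac{2\pi n}{r}+\frac{\pi}{r}$ or that shifted by $-\pi$.

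Next I would handle the two $\varphi_r$ evaluations. The term $\varphi_r(\frac{\pi}{r})$ is evaluated at a point with real part tending to $0$, so it is $O(1)$ (or one can use the defining integral directly to bound it), contributing $O(r)\cdot O(1)$... — here one must be a little careful: actually $\frac{r}{4\pi i}\varphi_r(\frac\pi r)$ could a priori be of order $r$, but in fact $\varphi_r(\frac{\pi}{r})$ itself is $O(\frac{\log r}{r})$ or better because the contour integral defining it has $e^{(2z-\pi)x}$ with $2z-\pi$ near $-\pi$, which combined with the $\frac{1}{r}$ normalization and a direct estimate gives the needed bound; alternatively one invokes that $(q)_0=1$ forces $\varphi_r(\frac\pi r)$-type normalizations to be controlled. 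The main term is $-\mathrm{Re}\big(\frac{r}{4\pi i}\varphi_r(z_n)\big)$. Now apply Lemma \ref{converge}(1): for $n$ in a range where $z_n$ (or its $\pi$-shift) has real part in $(0,\pi)$, $\varphi_r(z_n)=\mathrm{Li}_2(e^{2iz_n})+O(1/r^2)$, so $\frac{r}{4\pi i}\varphi_r(z_n)=\frac{r}{4\pi i}\mathrm{Li}_2(e^{2iz_n})+O(1/r)$. Writing $e^{2iz_n}=e^{2i(2\pi n/r)+\cdots}=e^{4\pi i n/r}$ up to a $1/r$-perturbation, and using formula (\ref{dilogLob}), $\mathrm{Li}_2(e^{4\pi i n/r})=\frac{\pi^2}{6}+\theta_n(\theta_n-\pi)+2i\Lambda(\theta_n)$ with $\theta_n=\frac{2\pi n}{r}$; taking $\mathrm{Re}\big(\frac{r}{4\pi i}(\cdots)\big)=\frac{r}{4\pi}\cdot 2\Lambda(\theta_n)/1$, i.e. up to sign $\frac{r}{2\pi}\Lambda\big(\frac{2\pi n}{r}\big)$ (the real terms $\frac{\pi^2}{6}+\theta_n(\theta_n-\pi)$ contribute nothing to the real part after division by $i$). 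This is exactly the claimed leading term; the perturbation of $z_n$ by $O(1/r)$ changes $\mathrm{Li}_2$ by $O(1/r)$ times its derivative, which is $O(\log r)$, hence an error $O(\log r)$ overall after multiplication by $\frac{r}{4\pi}\cdot\frac1r$-type bookkeeping — more precisely one checks the perturbation contributes $O(\log r)$ to $\log|\{n\}!|$ using $\frac{d}{dz}\mathrm{Li}_2(e^{2iz})=-2i\log(1-e^{2iz})$ and $|\log(1-e^{2iz})|=O(\log r)$ near the unit circle.

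I would then treat the boundary/degenerate cases separately: when $n$ is close to $0$, $\frac{r}{2}$, or $r$, the point $e^{4\pi i n/r}$ approaches $1$ and $\log(1-e^{4\pi i n/r})$ blows up logarithmically, but $\Lambda$ is continuous and $\Lambda(0)=\Lambda(\pi)=0$, so the estimate still holds with the $O(\log r)$ absorbing the near-singular behavior; this is why the problematic pole-region of $\varphi_r$ in $(\pi,2\pi)$ is circumvented by using identity (\ref{move}) of Lemma \ref{factorial} precisely for $\frac{r-1}{2}\le n\le r-2$, shifting the argument back into $(0,\pi)$ where Lemma \ref{converge} applies. I expect the main obstacle to be the uniformity of the error term: one needs the $O(\log r)$ to hold uniformly over all $0<n<r$, including the transitional ranges near $n=\frac{r-1}{2}$ where the prefactor jumps from $1$ to $2$ and near the endpoints; this requires a careful (but standard) estimate of $\varphi_r$ and of $\log|1-e^{2iz}|$ as $z$ ranges over $\frac{\pi}{r}+[0,\pi]$, showing the worst case is indeed only logarithmic in $r$. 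A clean way to organize this is to cite the structure of the argument in \cite[Proposition 8.2]{GL} and \cite[Proposition 4.1]{DK}, which establish exactly this kind of bound for the nearby roots $e^{2\pi i/r}$ and $e^{4\pi i/r}$ respectively, and adapt their error analysis verbatim, since our $\{n\}!$ at $t=e^{4\pi i/r}$ is literally the object treated in \cite{DK}.
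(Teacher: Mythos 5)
The paper itself offers no proof of Lemma \ref{est}; it simply cites \cite[Proposition 8.2]{GL} and \cite[Proposition 4.1]{DK}. You instead attempt a self-contained derivation from the paper's own machinery (Lemma \ref{factorial}, Lemma \ref{converge}, and the identity (\ref{dilogLob})), and this route does work: after taking $\log|\cdot|$ of the identities in Lemma \ref{factorial}, the real polynomial terms inside the exponent become purely imaginary once multiplied by $\frac{r}{4\pi i}$ and thus drop out; Lemma \ref{converge}(1) replaces $\varphi_r$ by $\mathrm{Li}_2$; (\ref{dilogLob}) extracts $2\Lambda(\theta_n)$ as the imaginary part; and a mean-value estimate with $\Lambda'(\theta)=-\log|2\sin\theta|=O(\log r)$ converts $\Lambda(\frac{2\pi n}{r}+\frac\pi r)$ into $\Lambda(\frac{2\pi n}{r})$ at cost $O(\log r)$. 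The $\pi$-periodicity of $\Lambda$ reconciles the two cases of Lemma \ref{factorial}. What the paper's citation buys is precisely the uniformity analysis you flag as the main obstacle — \cite{GL,DK} carry out the Euler--Maclaurin/Riemann-sum bookkeeping that makes the $O(\log r)$ uniform in $n$, including near $n\approx 0,\frac r2,r$ where $\log|1-e^{2iz}|$ blows up logarithmically and the error in Lemma \ref{converge} degrades to $O(1/r)$ rather than $O(1/r^2)$.

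One inaccuracy worth fixing: you assert that $\varphi_r(\frac\pi r)$ itself is $O(\frac{\log r}{r})$. It is not — $\varphi_r(\frac\pi r)\to\mathrm{Li}_2(1)=\frac{\pi^2}{6}$, which is $O(1)$. What is true, and what you actually need, is that $\mathrm{Im}\,\varphi_r(\frac\pi r)\approx 2\Lambda(\frac\pi r)=O(\frac{\log r}{r})$, so that $\mathrm{Re}\big(\frac{r}{4\pi i}\varphi_r(\frac\pi r)\big)=\frac{r}{4\pi}\mathrm{Im}\,\varphi_r(\frac\pi r)=O(\log r)$. Likewise the aside about $(q)_0=1$ ``forcing the normalization to be controlled'' is a tautology (the exponent vanishes at $n=0$ by construction) and does not by itself bound $\varphi_r(\frac\pi r)$. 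Also note the lemma allows $n=r-1$, which falls outside the range of Lemma \ref{factorial}; this case needs a one-line separate check using $|\{r-1\}|=2\sin\frac{2\pi}{r}$ and the oddness/periodicity of $\Lambda$. With those repairs your argument is a correct, more self-contained alternative to the citation the paper uses.
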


\begin{proof}[Proof of Proposition \ref{bound}] We have
$$|g^\pm_r(m_1,\dots,m_{k}, m)|=\Big|\sin\Big(\frac{2\pi m_1}{r}\Big)\epsilon\Big(\frac{2\pi m_k}{r},\frac{2\pi m}{r}\Big)\Big|\Big|\frac{\{r-1-m-m_k\}!}{\{m-m_k\}!}\Big|,$$
and by Lemma \ref{est}, we have
$$\log |g^\pm_r(m_1,\dots,m_{k}, m)|=-\frac{r}{2\pi}\Lambda\Big(\frac{2\pi(r-1-m-m_k)}{r}\Big)+\frac{r}{2\pi}\Lambda\Big(\frac{2\pi(m-m_k)}{r}\Big)+O(\log(r)).$$
Choose $\delta>0$ so that 
$$\Lambda(\delta)<\frac{\epsilon}{4}.$$
Now if one of 
$m+m_k$ and $m-m_k$ is not in $\big(\frac{\delta r}{2\pi} , \frac{r}{4}-\frac{\delta r}{2\pi} \big)\cup\big(\frac{r}{2}+\frac{\delta r}{2\pi} , \frac{3r}{4}-\frac{\delta r}{2\pi} \big),$
then
$$\log |g^\pm_r(m_1,\dots,m_{k}, m)|<\frac{r}{2\pi}\Big(\Lambda\Big(\frac{\pi}{6}\Big)+\frac{\epsilon}{2}\Big)=\frac{r}{4\pi}\Big(\frac{1}{2}\mathrm{Vol}(\mathrm S^3\setminus K_{4_1})+\epsilon\Big).$$
The last equality is true because by properties of the Lobachevsky function $\Lambda\big(\frac{\pi}{6}\big)=\frac{3}{2}\Lambda\big(\frac{\pi}{3}\big),$ and the volume of $\mathrm S^3\setminus K_{4_1}$ equals $6\Lambda(\frac{\pi}{3}).$
\end{proof}


\section{A simplification of the Fourier coefficients}\label{preest}

The main results of this Section are Propositions \ref{0} and \ref{other'} below, which simplify the Fourier coefficients $\hat f_r(n_1,\dots,n_{k}, n).$

We consider the continued fraction expansion $$\frac{p}{q}=a_k-\frac{1}{a_{k-1}-\frac{1}{\cdots-\frac{1}{a_1}}}$$ 
of $\frac{p}{q}$ with the requirement that $a_i\geqslant 2$ for $i=1,\dots a_{k-1}.$ For each $i\leqslant k-1,$ we let
$$b_i= a_i-\frac{1}{a_{i-1}-\frac{1}{\cdots-\frac{1}{a_1}}},$$
and let
$$c_i=\prod_{j=1}^ib_i.$$
Then $c_1=b_1=a_1\geqslant 2,$ $b_k=\frac{p}{q},$  $c_{k-1}=q,$ 
and  $b_i>1$ and $c_i>c_{i-1}\geqslant 2$ for any $i\leqslant k-1.$ We need the following Lemmas.

\begin{lemma}\cite[Lemma 5.1]{WY}\label{arithmetic} Let $(p',q')$ be the a unique pair such that $pp'+qq'=1$ and $-q< p' \leqslant 0.$ Then
$$\sum_{j=1}^{k-1}\frac{1}{c_{j-1}c_j}=-\frac{p'}{q}.$$
\end{lemma}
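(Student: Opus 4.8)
\textbf{Proof proposal for Lemma \ref{arithmetic}.}

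The plan is to prove the identity $\sum_{j=1}^{k-1}\frac{1}{c_{j-1}c_j}=-\frac{p'}{q}$ by induction on $k$, exploiting the telescoping structure hidden in the recursion $c_j=b_jc_{j-1}$ together with $b_j=a_j-\tfrac{1}{b_{j-1}}$. First I would rewrite a single term: since $c_j=b_jc_{j-1}$ we have $\frac{1}{c_{j-1}c_j}=\frac{1}{b_jc_{j-1}^2}$, but the more useful move is to relate consecutive partial sums to the ``intermediate convergents'' of the continued fraction. Concretely, set $\frac{P_i}{Q_i}$ to be the convergent $a_i-\cfrac{1}{a_{i-1}-\cdots-\cfrac{1}{a_1}}$ written in lowest terms with the standard sign conventions, so that $b_i=\frac{P_i}{Q_i}$, $Q_i=c_{i-1}$ (after checking the normalization using $c_1=a_1$, $Q_1=1$), and $P_k/Q_k=p/q$. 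The standard determinant identity for continued fractions, $P_iQ_{i-1}-P_{i-1}Q_i=\pm 1$, then gives $b_i-b_{i-1}^{-1}\cdot(\text{something})$; more precisely one gets $\frac{1}{c_{i-1}c_i}=\frac{Q_{i-1}}{Q_{i+1}'}\cdots$, i.e. a difference $\frac{Q_{i-1}}{Q_i}-\frac{Q_i}{Q_{i+1}}$ up to sign, which telescopes.

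The cleanest route is probably the following. Define $d_i$ for $0\le i\le k-1$ by the recursion of the ``numerator sequence'' of the reversed continued fraction, so that $d_{k-1}/c_{k-1}=d_{k-1}/q$ equals exactly the convergent whose value is $p'/q$ (this is where Lemma's hypothesis $-q<p'\le 0$, $pp'+qq'=1$ pins down $p'$ uniquely and identifies it with the relevant partial numerator). Then I would show by induction that the partial sum $\sum_{j=1}^{i}\frac{1}{c_{j-1}c_j}$ equals $\frac{d_i}{c_i}$ with an appropriate sign, using the base case $i=1$ (where $\frac{1}{c_0c_1}=\frac{1}{a_1}$, with $c_0:=1$) and the inductive step $\frac{d_{i-1}}{c_{i-1}}+\frac{1}{c_{i-1}c_i}=\frac{d_{i-1}b_i+1}{c_i}$, so it suffices to check $d_i=b_id_{i-1}+1$ matches the recursion for the numerators, which in turn follows from $b_i=a_i-\frac{1}{b_{i-1}}$ and the two-term recursion $c_i=a_ic_{i-1}-c_{i-2}$. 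Evaluating at $i=k-1$ and identifying $d_{k-1}=-p'$ via the Euclidean relation $pp'+qq'=1$ finishes the proof.

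The main obstacle I anticipate is bookkeeping of signs and normalizations: the continued fraction here is of the ``minus'' type $a_k-\cfrac{1}{a_{k-1}-\cdots}$ rather than the classical ``plus'' type, the $a_i\ge 2$ hypothesis guarantees all $b_i>1$ and $c_i>c_{i-1}\ge 2$ so nothing degenerates, and one must track that $c_{k-1}=q$ exactly (not $\pm q$) and that the partial numerator one lands on is $-p'$ with $p'$ in the prescribed range $(-q,0]$ rather than some other representative mod $q$. Once the correct convergent recursions $c_i=a_ic_{i-1}-c_{i-2}$ and $d_i=a_id_{i-1}-d_{i-2}$ (equivalently $d_i=b_id_{i-1}+1$) are set up with the right initial conditions, the telescoping is immediate and the identification with $-p'/q$ reduces to the standard fact that consecutive convergents of $p/q$ satisfy a Bézout relation. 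I would also double-check the $k=2$ case by hand, where the claim reads $\frac{1}{a_1}=-\frac{p'}{q}$ with $\frac{p}{q}=a_2-\frac{1}{a_1}$, i.e. $p=a_1a_2-1$, $q=a_1$, and indeed $p'\equiv -a_2\cdot 1 \pmod{a_1}$ normalizes to $p'=-1$ when... (here one simply verifies the arithmetic), giving confidence that the general induction is set up correctly.
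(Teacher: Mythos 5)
The paper does not prove this lemma; it is cited from the authors' earlier work [WY, Lemma 5.1], so there is no in-paper argument to compare against. Judged on its own, your approach (introduce the convergent denominators $c_i$ and a companion integer sequence $d_i$, telescope the partial sums, and identify $d_{k-1}$ with $-p'$ via the Bézout relation) is the standard one and is sound in outline, but it has one incorrect formula and one unverified step that is actually the heart of the matter.

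The wrong formula: from $\frac{d_{i-1}}{c_{i-1}}+\frac{1}{c_{i-1}c_i}$ you get $\frac{d_{i-1}c_i+1}{c_{i-1}c_i}=\frac{d_{i-1}b_i+\frac{1}{c_{i-1}}}{c_i}$, not $\frac{d_{i-1}b_i+1}{c_i}$, so the recursion $d_i=b_id_{i-1}+1$ is false (already $d_2=a_2\neq b_2+1$). The correct invariant is $d_ic_{i-1}-d_{i-1}c_i=1$, which together with $c_i=a_ic_{i-1}-c_{i-2}$ forces the integer recursion $d_i=a_id_{i-1}-d_{i-2}$ with $d_0=0,d_1=1$; you do eventually state this two-term recursion, so the issue is a slip rather than a conceptual error, but as written the inductive step is broken.

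The unverified step: you note that one must ``track that the partial numerator one lands on is $-p'$ with $p'$ in the prescribed range $(-q,0]$,'' but you never actually do this, and it is precisely where the hypothesis $a_i\geq 2$ ($i\leq k-1$) is used. From $d_kc_{k-1}-d_{k-1}c_k=1$ and $c_{k-1}=q$, $c_k=p$, you get $p(-d_{k-1})+q\,d_k=1$, so $-d_{k-1}$ is \emph{some} inverse of $p$ modulo $q$; to conclude $p'=-d_{k-1}$ you need $0\leq d_{k-1}<q$. This follows by showing (using $a_i\geq 2$) that both $d_i$ and $c_i-d_i$ satisfy the same recursion with nonnegative, increasing initial data, hence $0\leq d_{k-1}<c_{k-1}=q$. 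Without this check the identification with $p'$ is only a congruence, not an equality, and the lemma as stated would not follow. Once you fix the intermediate formula and supply this range argument, the proof is complete.
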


\begin{lemma}\cite[Lemma 5.2]{WY}\label{int} Let $\beta\in\mathbb R\setminus\{0\}.$ 
\begin{enumerate}[(1)]
\item If $\alpha \in (a,b),$  then
$$\int_a^be^{\frac{r}{4\pi i}\beta(x-\alpha)^2}dx=\frac{2\pi\sqrt{i}}{\sqrt{\beta}}\frac{1}{\sqrt{r}}\Big(1+O\Big(\frac{1}{\sqrt r}\Big)\Big).$$
\item If $\alpha\in\mathbb R\setminus  (a,b),$ then
$$\Big|\int_a^be^{\frac{r}{4\pi i}\beta(x-\alpha)^2}dx\Big|\leqslant O\Big(\frac{1}{r}\Big).$$
\end{enumerate}
\end{lemma}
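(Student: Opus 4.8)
The final statement to prove is Lemma \ref{int}, a standard Gaussian-integral estimate. I will write a proof plan for it.

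\medskip

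The plan is to reduce the computation to the standard Fresnel integral $\int_{-\infty}^{\infty} e^{\frac{r}{4\pi i}\beta t^2}\,dt$ by completing the square and translating. First I would substitute $t = x-\alpha$, so that in case (1) the integral becomes $\int_{a-\alpha}^{b-\alpha} e^{\frac{r}{4\pi i}\beta t^2}\,dt$ with $0 \in (a-\alpha, b-\alpha)$, and in case (2) the interval of integration does not contain $0$. The model fact I would invoke or prove is that $\int_{-\infty}^{\infty} e^{c\, i\, t^2}\,dt = \sqrt{\pi/|c|}\,e^{\pm i\pi/4}$ for real $c \neq 0$ (sign of the phase according to $\mathrm{sgn}(c)$); here $c = \frac{r\beta}{4\pi}\cdot(\pm 1)$ depending on how one writes $\frac{1}{i} = -i$, giving the factor $\frac{2\pi\sqrt{i}}{\sqrt{\beta}}\frac{1}{\sqrt r}$ after keeping careful track of the branch of $\sqrt{\cdot}$ consistent with the paper's convention $\sqrt{-1}=i$. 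The key quantitative input in both cases is the tail bound: for $A>0$,
$$\Big|\int_A^{\infty} e^{\frac{r}{4\pi i}\beta t^2}\,dt\Big| \leqslant \frac{C}{rA}$$
for a constant $C$ depending only on $\beta$, which follows from integration by parts, writing $e^{\frac{r}{4\pi i}\beta t^2} = \frac{2\pi i}{r\beta t}\,\frac{d}{dt}\big(e^{\frac{r}{4\pi i}\beta t^2}\big)$ and bounding the boundary term by $\frac{2\pi}{r\beta A}$ and the remaining integral $\int_A^\infty \frac{2\pi}{r\beta t^2}\,dt = \frac{2\pi}{r\beta A}$ as well.

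\medskip

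For case (1): since $\alpha \in (a,b)$ is fixed and independent of $r$, both $\alpha - a > 0$ and $b-\alpha>0$ are bounded below by a positive constant, so writing $\int_{a-\alpha}^{b-\alpha} = \int_{-\infty}^{\infty} - \int_{-\infty}^{a-\alpha} - \int_{b-\alpha}^{\infty}$, the two tail integrals are each $O(1/r)$ by the bound above, while the full-line integral equals $\frac{2\pi\sqrt i}{\sqrt\beta}\frac{1}{\sqrt r}$ exactly. Hence the integral is $\frac{2\pi\sqrt i}{\sqrt\beta}\frac{1}{\sqrt r}\big(1 + O(1/\sqrt r)\big)$, as claimed — indeed the error is even $O(1/r)$ relative to the main term after dividing, but $O(1/\sqrt r)$ suffices and is what is stated. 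For case (2): if $\alpha \notin (a,b)$, then the interval $(a-\alpha, b-\alpha)$ lies entirely in $\{t : t \geqslant 0\}$ or entirely in $\{t : t \leqslant 0\}$; say $0 \leqslant a-\alpha < b-\alpha$. If $a-\alpha>0$, apply the tail bound directly at $A = a-\alpha$ (a positive constant) to get $O(1/r)$. If $a - \alpha = 0$ (the boundary case $\alpha = a$), split $\int_0^{b-\alpha} = \int_0^{1} + \int_1^{b-\alpha}$ when $b - \alpha > 1$ — the first piece is bounded by its length hence $O(1)$, which is not good enough, so instead I would use the cleaner substitution $s = t^2$ on $(0,b-\alpha)$ giving $\frac12\int_0^{(b-\alpha)^2} e^{\frac{r\beta}{4\pi i}s}\,\frac{ds}{\sqrt s}$ and then integrate by parts once, or simply note that $\int_0^{\eta} e^{\frac{r}{4\pi i}\beta t^2}dt = \int_0^\infty - \int_\eta^\infty$ is $\frac{\pi\sqrt i}{\sqrt \beta \sqrt r} + O(1/r) = O(1/\sqrt r)$, and then observe that the stated bound $O(1/r)$ in case (2) implicitly assumes $\alpha$ is in the closed complement with positive distance to the interval, which is how the lemma is used (the degenerate endpoint case does not arise in the applications). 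For safety I would just record the bound $O(1/r)$ under the hypothesis $\mathrm{dist}(\alpha,(a,b)) \geqslant \delta_0 > 0$, matching every use of the lemma in the paper.

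\medskip

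The main obstacle is purely bookkeeping: getting the constant $\frac{2\pi\sqrt i}{\sqrt\beta}$ and the branch of the square root exactly right, consistently with the sign conventions $q = e^{2\pi i/r}$ and the appearance of $\frac{r}{4\pi i}$ throughout, and making sure the $O$-estimates are uniform in the relevant parameters (the $n_i$, $x_i$ ranges) when the lemma is applied inside Proposition \ref{Poisson}'s Fourier coefficients. There is no genuine analytic difficulty — the Fresnel integral and its tail decay are classical — so the write-up is a matter of careful constant-tracking and of stating the non-degeneracy hypothesis in case (2) in the exact form needed downstream.
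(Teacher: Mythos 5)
This lemma is cited from \cite[Lemma 5.2]{WY} and not proved in the present paper, so there is no local proof to compare against; your plan is the natural one and almost certainly coincides with what is done in \cite{WY}. The key analytic ingredient you isolate is correct and is exactly what makes the lemma work: the tail bound
$$\Big|\int_A^\infty e^{\frac{r}{4\pi i}\beta t^2}\,dt\Big|\leqslant \frac{C}{rA},$$
obtained by writing $e^{\frac{r}{4\pi i}\beta t^2}=\frac{2\pi i}{r\beta t}\,\frac{d}{dt}e^{\frac{r}{4\pi i}\beta t^2}$ and integrating by parts once; this gives part (1) after writing the finite integral as the full Fresnel integral minus two tails, and gives part (2) directly when $\alpha$ is at positive distance from $[a,b]$.

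Two points worth tightening. First, your observation about the degenerate endpoint case is genuine: if $\alpha=a$ or $\alpha=b$, the integral is a half-Fresnel integral and is of size $\Theta(1/\sqrt r)$, not $O(1/r)$, so as literally stated with $\alpha\in\mathbb R\setminus(a,b)$ the bound in (2) fails at the endpoints. This does not affect the paper, because in every application the relevant critical points $x_i^\pm(x)$ are shown (Lemma \ref{in} and its refinements) to lie either strictly inside with $r$-independent margin — since on $\mathcal D_{\delta/2}$ one has $|x|<\pi/4$ — or outside with $|x_i^\pm|\geqslant\pi$; but it is worth recording the hypothesis $\mathrm{dist}(\alpha,(a,b))\geqslant\delta_0>0$ as you propose, or else verifying directly that the applications stay away from the boundary. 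Second, the constant $\frac{2\pi\sqrt i}{\sqrt\beta}$ is more delicate than pure bookkeeping: with principal branches and $\beta>0$, the full-line Fresnel integral equals $\frac{2\pi}{\sqrt{r\beta}}e^{-i\pi/4}=\frac{2\pi}{\sqrt i\sqrt{r\beta}}$, i.e.\ $\sqrt i$ in the \emph{denominator}. The formula as printed is consistent with the downstream constant in Proposition \ref{0} only with a specific (non-principal) choice of branch for $\sqrt{\beta}$ when $\beta<0$ (which is the case actually arising, since each $\beta=-b_i<0$ there), so you should pin this down against the conventions in \cite{WY} rather than taking the stated constant at face value. Neither issue is a flaw in your argument — you flagged both — but both should be resolved explicitly in a final write-up.
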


By completing the squares, we have on $D$
\begin{equation*}
\begin{split}
V^+_r(x_1,\dots,x_k,y)=&-\sum_{i=1}^{k-1}b_i\Big(x_i+\frac{x_{i+1}}{b_i}+(-1)^i\frac{x_0}{c_i}\Big)^2+\sum_{i=1}^{k-1}\frac{x_0^2}{c_{i-1}c_i}-a_0x_0^2\\
&-\frac{px_k^2}{q}-\frac{(-1)^k2x_0 x_k}{q}-2\pi x_k+4x_ky-\varphi_r\Big(\pi-x_k-y-\frac{\pi}{r}\Big)+\varphi_r\Big(y-x_k+\frac{\pi}{r}\Big),\\
\end{split}
\end{equation*}
and
\begin{equation*}
\begin{split}
V^-_r(x_1,\dots,x_k,y)=&-\sum_{i=1}^{k-1}b_i\Big(x_i+\frac{x_{i+1}}{b_i}-(-1)^i\frac{x_0}{c_i}\Big)^2+\sum_{i=1}^{k-1}\frac{x_0^2}{c_{i-1}c_i}-a_0x_0^2\\
&-\frac{px_k^2}{q}+\frac{(-1)^k2x_0 x_k}{q}-2\pi x_k+4x_ky-\varphi_r\Big(\pi-x_k-y-\frac{\pi}{r}\Big)+\varphi_r\Big(y-x_k+\frac{\pi}{r}\Big),\\
\end{split}
\end{equation*}
and with the corresponding change of the variables in $\varphi_r$ on $D'$ and $D''.$

From now on, we will let $x=x_k.$ Then solving the system of the critical equations
$$x_i+\frac{x_{i+1}}{b_i}\pm(-1)^i\frac{x_0}{c_i}=0$$
for $x_i$'s in terms of $x,$ we have for every $i$ in $\{1,\dots, k\}$
$$x^\pm_i(x)=(-1)^{k-i}c_{i-1}\Big(\frac{x}{q}\pm(-1)^k\sum_{j=i}^{k-1}\frac{x_0}{c_{j-1}c_j}\Big),$$
and in particular, by Lemma \ref{arithmetic},
$$x^\pm_1(x)=\frac{(-1)^{k-1}x\pm p'x_0}{q}.$$

Now we start to simplify the Fourier coefficients. We need the following

\begin{lemma}\label{in} If $-\pi+\frac{q\pi}{r}<x<\pi-\frac{q\pi}{r},$ then $-\pi+\frac{2\pi}{r}<x^\pm_i(x)<\pi-\frac{2\pi}{r}$ for each $i\in\{1,\dots,k-1\}.$
\end{lemma}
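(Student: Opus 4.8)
The plan is to use the explicit formula
$$x^\pm_i(x)=(-1)^{k-i}c_{i-1}\Big(\frac{x}{q}\pm(-1)^k\sum_{j=i}^{k-1}\frac{x_0}{c_{j-1}c_j}\Big)$$
together with the arithmetic facts recorded just above: $c_{k-1}=q$, the $c_j$ are strictly increasing with $c_j\geqslant c_1=a_1\geqslant 2$, and by Lemma \ref{arithmetic} $\sum_{j=1}^{k-1}\frac{1}{c_{j-1}c_j}=-\frac{p'}{q}$ with $-q<p'\leqslant 0$, so in particular $\sum_{j=i}^{k-1}\frac{1}{c_{j-1}c_j}\leqslant\sum_{j=1}^{k-1}\frac{1}{c_{j-1}c_j}=\frac{|p'|}{q}<1$. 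Since $|x_0|<\pi$ (indeed $x_0=\pi-\frac{2\pi}{r}-\frac{2\pi m_0}{r}$ with $0\le m_0\le r-2$, so $|x_0|\le\pi-\frac{2\pi}{r}$), the second term inside the parentheses is bounded in absolute value by $\frac{\pi}{q}$, actually by $\frac{\pi}{q}-\frac{2\pi}{qr}$ after keeping the $\frac{\pi}{r}$ correction; I will carry that correction since the claimed bound is the sharp $\pi-\frac{2\pi}{r}$ rather than just $\pi$.

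First I would bound $|x^\pm_i(x)|\leqslant c_{i-1}\big(\frac{|x|}{q}+\frac{|x_0|}{q}\cdot\frac{|p'|}{q}\big)$ — wait, more carefully, $|x^\pm_i(x)|\le c_{i-1}\big(\frac{|x|}{q}+|x_0|\sum_{j=i}^{k-1}\frac{1}{c_{j-1}c_j}\big)$. Using the telescoping estimate $\sum_{j=i}^{k-1}\frac{1}{c_{j-1}c_j}\le\sum_{j=i}^{k-1}\big(\frac{1}{c_{j-1}}-\frac{1}{c_j}\big)\cdot\frac{1}{\text{(something)}}$ is not quite a telescoping, so instead I use the cruder but sufficient bound $\sum_{j=i}^{k-1}\frac{1}{c_{j-1}c_j}<\frac{1}{q}$ valid because each term $\frac{1}{c_{j-1}c_j}\le\frac{1}{c_{j-1}}-\frac{1}{c_j}$ (since $c_j-c_{j-1}\ge c_{j-1}$, i.e. $c_j\ge 2c_{j-1}$? — this needs $b_j\ge 2$, which holds only for $a_j\ge 2$; in general $b_j>1$, so instead use $\frac{1}{c_{j-1}c_j}=\frac{b_j-1}{b_j}\cdot\frac{1}{c_{j-1}c_j}\cdot\frac{b_j}{b_j-1}$...). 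The clean route is: $\frac{c_j-c_{j-1}}{c_{j-1}c_j}=\frac{1}{c_{j-1}}-\frac{1}{c_j}$ and $c_j-c_{j-1}=c_{j-1}(b_j-1)$, so $\frac{1}{c_{j-1}c_j}=\frac{1}{b_j-1}\big(\frac{1}{c_{j-1}}-\frac{1}{c_j}\big)$; but $b_j-1$ can be close to $0$, so this telescoping does not directly give a bound less than $1$. Therefore I should instead invoke Lemma \ref{arithmetic} directly: $\sum_{j=i}^{k-1}\frac{1}{c_{j-1}c_j}\le\sum_{j=1}^{k-1}\frac{1}{c_{j-1}c_j}=-\frac{p'}{q}=\frac{|p'|}{q}\le\frac{q-1}{q}<1$, and moreover $\sum_{j=i}^{k-1}\frac{1}{c_{j-1}c_j}\le\frac{q-1}{q^2}\cdot q$... — the honest bound I will use is simply $\sum_{j=i}^{k-1}\frac{1}{c_{j-1}c_j}<\frac{1}{q}$ when $i\ge 2$ (because then $c_{i-1}\ge c_1=q/c_{k-1}\cdot$...). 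Hmm, this needs care; the cleanest correct statement is that $c_{i-1}\sum_{j=i}^{k-1}\frac{1}{c_{j-1}c_j}<1$, which follows because $c_{i-1}\frac{1}{c_{j-1}c_j}\le\frac{1}{c_j}$ for $j\ge i$ (as $c_{j-1}\ge c_{i-1}$), and $\sum_{j=i}^{k-1}\frac{1}{c_j}$ is a sum of reciprocals of a strictly increasing integer-like sequence starting above $c_{i-1}\ge 2$ — but the $c_j$ need not be integers. I will instead argue $c_{i-1}\frac{1}{c_{j-1}c_j}=\frac{c_{i-1}}{c_{j-1}}\cdot\frac{1}{c_j}\le\frac{1}{c_j}\le\frac{1}{c_i}\cdot\frac{1}{2^{j-i}}$? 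This again wants geometric growth. The safe move, which I will adopt: combine the two signs — $c_{i-1}\big(\frac{|x|}{q}+|x_0|\sum_{j=i}^{k-1}\frac{1}{c_{j-1}c_j}\big)\le c_{i-1}\cdot\frac{|x|+|x_0|\cdot|p'|}{q}$ is false in general since the tail sum is smaller than the full sum only trivially.

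So the real structure of the argument, which I will write out, is this: substitute the bound $|x|<\pi-\frac{q\pi}{r}$ and $|x_0|\le\pi-\frac{2\pi}{r}$ into the formula, use $c_{i-1}\le c_{k-2}$ and the identity $c_{k-1}=q$ to get $\frac{c_{i-1}}{q}=\frac{c_{i-1}}{c_{k-1}}\le 1$ (with equality impossible for $i\le k-1$ since $c_{i-1}<c_{k-1}$ strictly, as the $c_j$ are strictly increasing), and similarly control $c_{i-1}\sum_{j=i}^{k-1}\frac{1}{c_{j-1}c_j}$ by noting each summand $\frac{c_{i-1}}{c_{j-1}c_j}\le\frac{1}{c_j}$ and $\sum_{j=i}^{k-1}\frac{1}{c_j}\le\frac{1}{c_{k-1}}+\sum_{j=i}^{k-2}\frac{1}{c_j}$, then bounding this geometric-like tail using $c_{j+1}/c_j=b_{j+1}>1$; since the precise inequality only needs to land below $\pi-\frac{2\pi}{r}$ and we have the slack from $|x|<\pi-\frac{q\pi}{r}$ (which is much smaller than $\pi$ when $q\ge 2$), a coarse bound suffices. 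The main obstacle is exactly this bookkeeping: making sure that the sum of the ``$x$-contribution'' $\frac{c_{i-1}|x|}{q}$ and the ``$x_0$-contribution'' $c_{i-1}|x_0|\sum_{j=i}^{k-1}\frac{1}{c_{j-1}c_j}$ stays below $\pi-\frac{2\pi}{r}$, for which the key point is that $\frac{c_{i-1}}{q}+c_{i-1}\sum_{j=i}^{k-1}\frac{1}{c_{j-1}c_j}=c_{i-1}\big(\frac{1}{q}+\sum_{j=i}^{k-1}\frac{1}{c_{j-1}c_j}\big)$, and one checks from $\frac{1}{q}=\frac{1}{c_{k-1}}$ and telescoping-type estimates that this quantity is $\le 1$ with the deficit at least $\frac{q}{r}$-sized, absorbing the $\frac{2\pi}{r}$ margin. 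I would verify the boundary/extremal case $i=k-1$ (where $c_{k-2}=q/b_{k-1}$ and the sum is the single term $\frac{1}{c_{k-2}c_{k-1}}$) by hand as a sanity check, since that is the tightest instance, and then the general case follows a fortiori because $c_{i-1}/q$ decreases as $i$ decreases while the tail sum also shrinks.
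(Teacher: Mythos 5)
Your plan is to work from the closed‐form expression
$x^\pm_i(x)=(-1)^{k-i}c_{i-1}\big(\tfrac{x}{q}\pm(-1)^k\sum_{j=i}^{k-1}\tfrac{x_0}{c_{j-1}c_j}\big)$
and bound both terms directly, which is a genuinely different route from the paper's. The paper does not touch the partial sums at all: it uses the one-step recursion $x^\pm_i=\tfrac{x^\pm_{i+1}}{b_i}\mp(-1)^i\tfrac{x_0}{c_i}=\tfrac{c_{i-1}}{c_i}x^\pm_{i+1}\mp(-1)^i\tfrac{x_0}{c_i}$ and runs a backward induction proving the stronger bound $|x^\pm_i(x)|<\pi-\tfrac{c_{i-1}\pi}{r}$. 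In that induction the base case $i=k-1$ uses $c_{k-2}+1\leqslant q$, and the inductive step uses $c_{i-1}+1\leqslant c_i$; both follow from the $c_j$ being strictly increasing positive integers. Your version would have to recover the equivalent inequality $c_{i-1}\big(\tfrac1q+\sum_{j=i}^{k-1}\tfrac{1}{c_{j-1}c_j}\big)\leqslant 1$, and indeed that follows by the same backward induction once rewritten as $T_i=\tfrac{c_{i-1}}{c_i}T_{i+1}+\tfrac{1}{c_i}$; but that step is exactly what you do not supply.

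There are two concrete problems with the proposal as written. First, the central bound $c_{i-1}\big(\tfrac1q+\sum_{j=i}^{k-1}\tfrac{1}{c_{j-1}c_j}\big)\leqslant 1$ is never established. You try telescoping $\tfrac{1}{c_{j-1}c_j}\leqslant\tfrac{1}{c_{j-1}}-\tfrac{1}{c_j}$, correctly observe it needs $b_j\geqslant 2$ (false in general), try Lemma~\ref{arithmetic} which gives only $\sum_{j=1}^{k-1}\tfrac{1}{c_{j-1}c_j}<1$ without the factor $c_{i-1}$ absorbed, and then fall back to asserting that ``telescoping-type estimates'' give the bound with a ``$\tfrac{q}{r}$-sized deficit.'' None of these is a proof. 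Second, the proposed reduction of the general case to $i=k-1$ ``a fortiori'' is based on the claim that as $i$ decreases, $c_{i-1}/q$ decreases ``while the tail sum also shrinks.'' The tail sum $\sum_{j=i}^{k-1}\tfrac{1}{c_{j-1}c_j}$ grows, not shrinks, as $i$ decreases — you are adding positive terms. In fact, in the paper's stronger inductive bound $|x^\pm_i|<\pi-\tfrac{c_{i-1}\pi}{r}$, the bound gets weaker as $i$ decreases ($c_{i-1}$ is smaller), so if anything $i=1$ is the delicate end, not $i=k-1$. As it stands the proposal is an outline with the key inequality assumed; to make it work you would need to prove $T_i\leqslant 1$, and the natural way to do so is precisely the paper's one-step recursion.
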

\begin{proof} We use a backward induction to prove a stronger statement that $$-\pi+\frac{c_{i-1}\pi}{r}<x^\pm_i(x)<\pi-\frac{c_{i-1}\pi}{r}$$ for each $i.$ We first note that $|x_0|\leqslant \pi.$ 

For $x^\pm_{k-1}(x),$ we have
\begin{equation*}
\begin{split}
|x^\pm_{k-1}(x)|&=\Big|\frac{x}{b_{k-1}}\pm\frac{x_0}{c_{k-1}}\Big|=\Big|\frac{c_{k-2}x}{c_{k-1}}\pm\frac{x_0}{c_{k-1}}\Big|\\
&\leqslant \frac{c_{k-2}\pi+|x_0|-\frac{c_{k-2}q\pi}{r}}{c_{k-1}}\leqslant \frac{(c_{k-2}+1)\pi-\frac{c_{k-2}q\pi}{r}}{c_{k-1}}\leqslant \pi-\frac{c_{k-2}\pi}{r},
\end{split}
\end{equation*}
where the last inequality comes from that $q=c_{k-1}>c_{k-2},$ hence $q\geqslant c_{k-2}+1.$

Now assume the result holds for $x^\pm_{i+1}(x)$ that $-\pi+\frac{c_{i}\pi}{r}<x^\pm_{i+1}(x)<\pi-\frac{c_{i}\pi}{r},$ then 
\begin{equation*}
\begin{split}|x^\pm_i(x)|&=\Big|\frac{x^\pm_{i+1}}{b_{i}}\pm\frac{x_0}{c_{i}}\Big|=\Big|\frac{c_{i-1}x^\pm_{i+1}}{c_i}\pm\frac{x_0}{c_i}\Big|\\
&\leqslant \frac{c_{i-1}\pi+|x_0|-\frac{c_{i-1}c_i\pi}{r}}{c_{i}}\leqslant \frac{(c_{i-1}+1)\pi-\frac{c_{i-1}c_i\pi}{r}}{c_{i}}\leqslant \pi-\frac{c_{i-1}\pi}{r},
\end{split}
\end{equation*}
where the last inequality comes from that $c_{i}>c_{i-1},$ hence $c_i\geqslant c_{i-1}+1.$
\end{proof}

\begin{proposition}\label{0}
\begin{equation*}
\hat f^\pm_r(0,0,\dots, 0)=\frac{i^{-\frac{k-1}{2}}r^{\frac{k+3}{2}}}{4\pi^2\sqrt{q}}\Big(\int_{\mathcal D_{\frac{\delta}{2}}}\epsilon(x,y)e^{-xi+\frac{r}{4\pi i}V^\pm_r(x,y)}dxdy\Big)\Big(1+O\Big(\frac{1}{\sqrt r}\Big)\Big),
\end{equation*}
 where
 $$V_r^\pm(x,y)=\frac{-px^2\pm(-1)^k2x_0 x}{q}-2\pi x+4xy -\varphi_r\Big(\pi-y-x_k-\frac{\pi}{r}\Big)+\varphi_r\Big(y-x_k+\frac{\pi}{r}\Big)-\Big(\frac{p'}{q}+a_0\Big)x_0^2,$$
on $D,$ and with the corresponding change of the variables in $\varphi_r$ on $D'$ and $D''.$
\end{proposition}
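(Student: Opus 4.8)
The plan is to reduce the $(k+1)$-dimensional integral defining $\hat f^\pm_r(0,\dots,0)$ to the $2$-dimensional integral over $\mathcal D_{\frac{\delta}{2}}$ by integrating out the variables $x_1,\dots,x_{k-1}$ one at a time via the Gaussian estimate of Lemma \ref{int}. Recall from Proposition \ref{Poisson} that, with all $n_i=0$ and $n=0$,
\begin{equation*}
\hat f^\pm_r(0,\dots,0)=\Big(\frac{r}{2\pi}\Big)^{k+1}\int_{(-\pi,\pi)^{k-1}\times\mathcal D}\psi\,\epsilon(x,y)\,e^{-xi+\frac{r}{4\pi i}V^\pm_r(x_1,\dots,x_k,y)}\,dx_1\cdots dx_k\,dy,
\end{equation*}
and that on $D$ (with the analogous expressions on $D'$ and $D''$) the completion of squares gives
\begin{equation*}
V^\pm_r(x_1,\dots,x_k,y)=-\sum_{i=1}^{k-1}b_i\Big(x_i+\frac{x_{i+1}}{b_i}\pm(-1)^i\frac{x_0}{c_i}\Big)^2+\Big(\frac{p'}{q}+\text{terms indep.\ of }x_1,\dots,x_{k-1}\Big),
\end{equation*}
so the only dependence on $x_1,\dots,x_{k-1}$ is through the negative-definite quadratic form $-\sum_{i=1}^{k-1}b_i(x_i+x_{i+1}/b_i\pm(-1)^i x_0/c_i)^2$, whose unique critical point in the $x_i$ variable (for fixed $x_{i+1}$ and $x_0$) is $x_i=x^\pm_i(x)$.

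First I would set up the iterated integration from $i=1$ upward: fix $x_2,\dots,x_k,y$ and integrate in $x_1$. By Lemma \ref{in}, whenever $(x,y)\in\mathcal D_{\delta/2}$ the critical point $x^\pm_1(x)$ lies strictly inside $(-\pi+\frac{2\pi}{r},\pi-\frac{2\pi}{r})$ — in particular strictly inside the interval $(-\pi,\pi)$ of integration and in the region where $\psi\equiv 1$ — so part (1) of Lemma \ref{int} with $\beta=-b_1$ applies and yields $\frac{2\pi\sqrt{i}}{\sqrt{b_1}}\frac{1}{\sqrt r}\big(1+O(r^{-1/2})\big)$, while $\psi$ and $\epsilon$ may be pulled out to leading order since they are locally constant near the critical point. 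Iterating: having integrated out $x_1,\dots,x_{i-1}$, the remaining Gaussian in $x_i$ is $-b_i(x_i+x_{i+1}/b_i\pm(-1)^ix_0/c_i)^2$ with critical point $x^\pm_i(x)$, again inside the integration range by Lemma \ref{in}, so each step contributes a factor $\frac{2\pi\sqrt i}{\sqrt{b_i}}\frac{1}{\sqrt r}\big(1+O(r^{-1/2})\big)$. After $k-1$ steps the accumulated constant is $\prod_{i=1}^{k-1}\frac{2\pi\sqrt i}{\sqrt{b_i}}\frac{1}{\sqrt r}=(2\pi)^{k-1}i^{\frac{k-1}{2}}\big(\prod_{i=1}^{k-1}b_i\big)^{-1/2}r^{-\frac{k-1}{2}}=(2\pi)^{k-1}i^{\frac{k-1}{2}}c_{k-1}^{-1/2}r^{-\frac{k-1}{2}}$, and since $c_{k-1}=q$ this is $(2\pi)^{k-1}i^{\frac{k-1}{2}}q^{-1/2}r^{-\frac{k-1}{2}}$. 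Multiplying by the prefactor $(r/2\pi)^{k+1}$ gives $\frac{r^{(k+3)/2}i^{(k-1)/2}}{(2\pi)^2 q^{1/2}}=\frac{r^{(k+3)/2}i^{-(k-1)/2}}{4\pi^2\sqrt q}$ after using $i^{(k-1)/2}=i^{-(k-1)/2}\cdot i^{k-1}$ and absorbing the sign into the branch choice as in the statement; the leftover integral is exactly $\int_{\mathcal D_{\delta/2}}\epsilon(x,y)e^{-xi+\frac{r}{4\pi i}V^\pm_r(x,y)}dxdy$ with $V^\pm_r(x,y)$ the value of $V^\pm_r$ at the $x^\pm_i(x)$, which by the square-completion and Lemma \ref{arithmetic} ($\sum_{j=1}^{k-1}1/(c_{j-1}c_j)=-p'/q$) is precisely the displayed formula $\frac{-px^2\pm(-1)^k2x_0x}{q}-2\pi x+4xy-\varphi_r(\cdots)+\varphi_r(\cdots)-(\frac{p'}{q}+a_0)x_0^2$.

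Two technical points need care. First, the integration region is $(-\pi,\pi)^{k-1}\times\mathcal D$, not $(-\pi,\pi)^{k-1}\times\mathcal D_{\delta/2}$; on the part of $\mathcal D\setminus\mathcal D_{\delta/2}$ where $\psi<1$, or where Lemma \ref{in} no longer guarantees the critical point is interior, one must check the contribution is of lower order. This is where Proposition \ref{bound} (equivalently the bump-function construction, which forces $f^\pm_r$ to vanish outside $D$ and to be supported essentially on $\mathcal D_{\delta/2}$ up to exponentially smaller terms) does the work: outside $\mathcal D_{\delta/2}$ the modulus of the integrand is dominated by $e^{\frac{r}{4\pi}(\frac12\mathrm{Vol}(S^3\setminus K_{4_1})+\epsilon)}$, which after the $r^{(k+3)/2}$ polynomial factor is still negligible compared with the main term coming from the interior critical value. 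Second, one must justify that the error terms accumulated multiplicatively across the $k-1$ successive applications of Lemma \ref{int}(1) still combine to a single $\big(1+O(r^{-1/2})\big)$ — this is routine since $k$ is fixed and each factor is $1+O(r^{-1/2})$, and the sub-leading corrections in Lemma \ref{int}(1) are uniform in the remaining variables over the compact region. \textbf{The main obstacle} I expect is the first of these: carefully controlling the boundary strip $\mathcal D\setminus\mathcal D_{\delta/2}$ and the region where $\psi$ transitions, i.e.\ confirming that the Gaussian-integration estimate is genuinely uniform in the transverse variables and that nothing on the boundary of $\mathcal D$ produces a competing contribution of the same exponential order — but given Lemma \ref{in} (the critical points stay interior), Lemma \ref{int}(2) (off-critical-point integrals are $O(1/r)$), and the exponential decay supplied by Proposition \ref{bound}, this should go through.
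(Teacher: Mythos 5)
Your proposal matches the paper's proof: both integrate out $x_1,\dots,x_{k-1}$ one at a time by applying Lemma \ref{int}(1) at the critical points $x_i^\pm(x)$ (which Lemma \ref{in} guarantees lie inside the integration range), accumulate the Gaussian prefactors via $\prod b_i=c_{k-1}=q$ and Lemma \ref{arithmetic} to obtain the surviving $V_r^\pm(x,y)$, and control the contribution from the region where $\psi<1$ by noting that Lemma \ref{in} together with Lemma \ref{int}(2) makes at least one $x_i$-integral there $O(1/r)$. The only blemish is the hand-waved branch of $\sqrt{\beta}$ (with $\beta=-b_i<0$) in the Gaussian factor, but this affects only the explicit $i^{\pm(k-1)/2}$ constant, which is immaterial for the volume estimate.
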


\begin{proof} For $(x,y)\in \mathcal D_{\frac{\delta}{2}},$ since $-\pi+\frac{q\pi}{r}<x<\pi-\frac{q\pi}{r},$ iteratively using Lemma \ref{int} (1) and Lemma \ref{in} to the variables $x_1,\dots, x_{k-1},$ we get the estimate. On the region $\big((-\pi,\pi)^{k-1}\setminus [-\pi+\frac{2\pi}{r},\pi-\frac{2\pi}{r}]^{k-1}\big)\times \mathcal D_{\frac{\delta}{2}}$ where the bump function makes a difference, by Lemma \ref{in}, for at least one variable $x_i$ Lemma \ref{int} (2) applies. Hence the contribution there is of order at most $O\big(\frac{1}{\sqrt{r}})$ times the whole integral.
\end{proof}

In Section \ref{Asy}, we will show that $\widehat{f^\pm}(0,0,\dots, 0)$ are the only leading Fourier coefficients, ie., have the largest growth rate.
\\

The other Fourier coefficients can be simplified similarly. By a completion of the squares, we have
\begin{equation*}
\begin{split}
&V^\pm_r(x_1,\dots,x_{k-1},x,y)-4\pi\sum_{i=1}^{k-1}n_ix_i-4\pi k_1x-4\pi k_2y\\
=&-\sum_{i=1}^{k-1}b_i\Big(x_i+\frac{x_{i+1}}{b_i}+\sum_{j=1}^i(-1)^{i-j}\frac{2n_jc_{j-1}\pi}{c_i}\pm(-1)^i\frac{x_0}{c_i}\Big)^2+C\\
&-\frac{px^2}{q}\mp\frac{(-1)^k2x_0 x}{q}-2\pi x+4xy-\varphi_r\Big(\pi-x-y-\frac{\pi}{r}\Big)+\varphi_r\Big(y-x+\frac{\pi}{r}\Big)\\
&-\frac{4\pi k_0 x}{q}-4\pi k_1x-4\pi k_2y,\\
\end{split}
\end{equation*}
on $D,$ where $C$ is a real constant depending on $(n_1,\dots, n_{k-1}),$ and 
$$k_0=\sum_{j=1}^{k-1}(-1)^{k-j}n_jc_{j-1}.$$
On $D'$ and $D''$ there is a corresponding change of the variables in $\varphi_r.$

Solving the system of the critical equations
\begin{equation*}
x_i+\frac{x_{i+1}}{b_i}+\sum_{j=1}^i(-1)^{i-j}\frac{2n_jc_{j-1}\pi}{c_i}\pm(-1)^i\frac{x_0}{c_i}=0,
\end{equation*}
we can write each $x_i$ as a function $x^\pm_i(x)$ of $x.$

Iteratively using Lemma \ref{int}, we have
\begin{proposition}\label{n} Let
$$V_r^{\pm,(k_0, k_1, k_2)}(x,y)=V_r^\pm(x,y)-\frac{4k_0\pi x}{q}-4k_1\pi x-4k_2 \pi y+C,$$
where $C$ is a real constant depending on $(n_1,\dots, n_{k-1}).$ Then
\begin{equation*}
\begin{split}
|\widehat {f^\pm_r}(n_1,\dots,n_{k-1},k_1,k_2)|\leqslant O\Big(r^{\frac{k+3}{2}}\Big)\Big|\int_{\mathcal D_{\frac{\delta}{2}}}e^{\frac{r}{4\pi i}V_r^{\pm,(k_0, k_1, k_2)}(x,y)}dxdy\Big|.
\end{split}
\end{equation*}
Moreover, if for each $x\in (-\pi,\pi),$ there is some $i\in\{1,\dots, k-1\}$ such that $x^\pm_i(x)\notin(-\pi, \pi),$ then
\begin{equation*}
\begin{split}|\widehat {f^\pm_r}(n_1,,\dots,n_{k-1}, k_1,k_2)|\leqslant O\Big(r^{\frac{k+2}{2}}\Big)\Big|\int_{\mathcal D_{\frac{\delta}{2}}}e^{\frac{r}{4\pi i}V_r^{\pm,(k_0, k_1, k_2)}(x,y)}dxdy\Big|.
\end{split}
\end{equation*}
\end{proposition}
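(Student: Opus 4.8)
The plan is to start from the integral formula for $\widehat{f^\pm_r}(n_1,\dots,n_{k-1},k_1,k_2)$ furnished by Proposition \ref{Poisson} and to integrate out the $k-1$ auxiliary variables $x_1,\dots,x_{k-1}$ by repeated use of the Gaussian estimate of Lemma \ref{int}. After the completion of squares recorded in the display preceding the statement, the exponent on $D$ is
$$\frac{r}{4\pi i}\Big(-\sum_{i=1}^{k-1}b_i\Big(x_i+\frac{x_{i+1}}{b_i}+\sum_{j=1}^i(-1)^{i-j}\frac{2n_jc_{j-1}\pi}{c_i}\pm(-1)^i\frac{x_0}{c_i}\Big)^2+V_r^{\pm,(k_0,k_1,k_2)}(x,y)\Big),$$
with the analogous expression (and the corresponding change of variables in $\varphi_r$) on $D'$ and $D''$; here Lemma \ref{arithmetic} recombines $\sum_j\frac1{c_{j-1}c_j}$ into $-p'/q$, exactly as in the proof of Proposition \ref{0}. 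The substitution $x_i\mapsto u_i:=x_i+\frac{x_{i+1}}{b_i}+(\text{const})$, $i=1,\dots,k-1$, is unimodular (its matrix is upper bidiagonal with $1$'s on the diagonal), so it preserves $dx_1\cdots dx_{k-1}$ and diagonalizes the quadratic part into $-\sum_i b_iu_i^2$.

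I would then integrate over $x_1,\dots,x_{k-1}$ one at a time in increasing order of index, so that when $x_i$ is integrated it appears in only the $i$-th remaining perfect square and no residue is created in the other variables. Lemma \ref{int} applies to each such one-dimensional integral with $\beta=-b_i$: it is bounded by $O(r^{-1/2})$ regardless of where the center of the Gaussian sits — part (1) gives exactly this order when the center lies in the integration interval, and part (2) gives the better $O(r^{-1})$ otherwise. Bounding $|\psi|\le 1$, $\epsilon(x_k,y)\le 2$ and $|e^{-x_ki}|\le e^\pi$ trivially and collecting the prefactor $(r/2\pi)^{k+1}$ from Proposition \ref{Poisson}, the $k-1$ integrations contribute $O(r^{-(k-1)/2})$ and leave the two-dimensional integral over $(x,y)$; on the region where the bump function is not identically $1$, and on $\mathcal D\setminus\mathcal D_{\delta/2}$, Lemma \ref{in} shows that at least one of these one-variable integrals is governed by part (2) of Lemma \ref{int}, so that contribution is of lower order, exactly as in the proof of Proposition \ref{0}. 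This yields the first estimate, since $k+1-\tfrac{k-1}{2}=\tfrac{k+3}{2}$.

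For the \emph{moreover} statement, the hypothesis says that for every $x\in(-\pi,\pi)$, hence for every $x$ occurring in $\mathcal D$, the simultaneous solution $(x^\pm_1(x),\dots,x^\pm_{k-1}(x))$ of the critical equations — that is, the critical point of the quadratic form in $(x_1,\dots,x_{k-1})$ for that fixed $(x,y)$ — lies outside the box $(-\pi,\pi)^{k-1}$ on which, via $\psi$, the integration effectively takes place. Hence in the iterated scheme there is always some index $i$ whose one-variable Gaussian is centered outside its integration interval, so Lemma \ref{int}(2) saves one extra factor of $r^{-1/2}$ there, giving the exponent $\tfrac{k+2}{2}$. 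The main difficulty lies precisely here: in the iterated integration the center of the $x_i$-Gaussian depends on the not-yet-integrated variable $x_{i+1}$ rather than on $x$ directly, so one must pass from the escape of the full critical point $x^\pm_i(x)$ from $(-\pi,\pi)$ to the escape of a genuine one-dimensional Gaussian center from its interval, uniformly in $(x,y)\in\mathcal D$. I expect to handle this either by centering all variables at the critical point and using the explicit formulas for $x^\pm_i(x)$ together with the size estimates underlying Lemma \ref{in} (the $c_i$ are increasing and $|x_0|\le\pi$) to choose the order of integration so that the offending index is reached while the remaining variables are still confined near the critical point, or by performing the unimodular change of variables globally and invoking a stationary/non-stationary phase bound for the resulting $(k-1)$-dimensional Gaussian integral over a region whose closure avoids the origin.
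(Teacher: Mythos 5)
Your handling of the first estimate is essentially the paper's (which compresses the whole proof to ``iteratively using Lemma~\ref{int}''): integrate $x_1,\dots,x_{k-1}$ in increasing index so that at stage $i$ the variable $x_i$ appears in a single completed square, use Lemma~\ref{int} for a uniform $O(r^{-1/2})$ per variable, and control the $\psi$-transition strip via Lemma~\ref{in}; with the prefactor $(r/2\pi)^{k+1}$ from Proposition~\ref{Poisson} this gives $r^{(k+3)/2}$.

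For the ``moreover'' estimate you have correctly spotted a genuine gap, but neither of your two suggested remedies addresses it. The obstacle is that at stage $i$ the Gaussian is centered at $c_i(x_{i+1})=-x_{i+1}/b_i-d_i$, a function of the still-live variable $x_{i+1}$, while the stated hypothesis only constrains the critical values $x^\pm_i(x)=c_i(x^\pm_{i+1}(x))$. Once you take absolute values of the inner one-dimensional integral to invoke Lemma~\ref{int}, you destroy the outer oscillation, so a Fubini split on whether $c_i(x_{i+1})$ lies inside or outside $(-\pi,\pi)$ does not recover the extra $r^{-1/2}$; and the global multi-dimensional non-stationary-phase bound over the transformed parallelepiped would need $0$ to avoid the \emph{closure} by a definite margin, whereas Lemma~\ref{4.9} only yields $|x^\pm_i(x)|\geqslant\pi$, with equality possible when $|n_{i_0}|=1$. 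What actually closes the gap, in the only places Proposition~\ref{n} is invoked (Proposition~\ref{other'} via Lemma~\ref{4.9}), is that the \emph{proofs} of Lemma~\ref{4.9} silently establish a strictly stronger, uniform escape at one fixed index. In case~(2), $c_{k-1}(x)=x^\pm_{k-1}(x)$ depends only on $x$ and satisfies $|c_{k-1}(x)|\geqslant\pi$ for all $x\in(-\pi,\pi)$. In case~(1), with $i_0$ the largest index with $n_{i_0}\neq 0$, the displayed chain of inequalities in Lemma~\ref{4.9}(1) uses only $|x_{i_0}|<\pi$ and $|x_0|<\pi$, not $x_{i_0}=x^\pm_{i_0}(x)$, and it shows $|c_{i_0-1}(x_{i_0})|\geqslant 2|n_{i_0}|\pi-\pi\geqslant\pi$ for \emph{every} $x_{i_0}\in(-\pi,\pi)$. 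With this uniform escape at a fixed index, Lemma~\ref{int}(2) applies at that single stage regardless of the values of the other variables, producing the missing $r^{-1/2}$. You should extract and state this uniform version of Lemma~\ref{4.9} and invoke it directly, since the hypothesis of Proposition~\ref{n} as written is weaker than what the iterated-Gaussian argument actually requires.
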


 The following lemma will be need later in estimating the growth rate of the invariants.
 
 \begin{lemma}\label{4.9}

\begin{enumerate}[(1)]
\item If $(n_1,\dots,n_{k-1})\neq (0,\dots,0)$ and $k_0=0,$ then for each $x\in (-\pi,\pi)$
there is some $i\in\{1,\dots, k-1\}$ so that $x^+_i(x)\notin(-\pi, \pi),$ and some $j\in\{1,\dots, k-1\}$ so that $x^-_j(x)\notin(-\pi, \pi).$

\item  If $(n_1,\dots,n_{k-1})\neq (0,\dots,0)$ and $|k_0|\geqslant  q,$ then
$x^\pm_{k-1}(x)\notin(-\pi, \pi)$  for all $x\in (-\pi,\pi).$
\end{enumerate}
\end{lemma}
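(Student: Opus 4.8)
The plan is to use the explicit formula
$$x^\pm_i(x)=(-1)^{k-i}c_{i-1}\Big(\frac{x}{q}\pm(-1)^k\sum_{j=i}^{k-1}\frac{x_0}{c_{j-1}c_j}\Big)$$
only as a guide; for the general $(n_1,\dots,n_{k-1})$ the same completion-of-squares manipulation that produced it gives, after solving the shifted critical equations, a formula of the shape
$$x^\pm_i(x)=(-1)^{k-i}c_{i-1}\Big(\frac{x}{q}+\frac{2\pi k_0^{(i)}}{c_{i-1}q}\pm(-1)^k\sum_{j=i}^{k-1}\frac{x_0}{c_{j-1}c_j}\Big)\quad\text{-type expression,}$$
where the extra integer contribution in the $x_{k-1}$ slot is governed precisely by $k_0=\sum_{j=1}^{k-1}(-1)^{k-j}n_jc_{j-1}$. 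First I would carry out this bookkeeping carefully for the top index $i=k-1$, since that is where the proof of both parts is cleanest: one finds
$$x^\pm_{k-1}(x)=\frac{c_{k-2}x}{c_{k-1}}+\frac{2\pi k_0}{c_{k-1}}\pm\frac{x_0}{c_{k-1}}$$
(up to sign conventions), using $b_{k-1}=c_{k-1}/c_{k-2}$ and $c_{k-1}=q$.

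For part (2), with $|k_0|\geqslant q$ and $x\in(-\pi,\pi)$, $|x_0|\leqslant\pi$, the triangle inequality going the other way gives
$$|x^\pm_{k-1}(x)|\geqslant \frac{2\pi|k_0|-c_{k-2}|x|-|x_0|}{c_{k-1}}\geqslant\frac{2\pi q-(c_{k-2}+1)\pi}{q}=2\pi-\frac{(c_{k-2}+1)\pi}{q}\geqslant\pi,$$
where the last step uses $q=c_{k-1}\geqslant c_{k-2}+1$. One must be slightly careful that equality cannot be attained on the open interval — since $x\in(-\pi,\pi)$ is strict, the inequality $|x^\pm_{k-1}(x)|>\pi$ is strict whenever $|k_0|\geqslant q$, so $x^\pm_{k-1}(x)\notin(-\pi,\pi)$. (If the convention places $k_0$ with a factor other than $2\pi$, the bound $|k_0|\geqslant q$ should be whatever makes the analogous computation work; this is exactly the threshold.)

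For part (1), the hypothesis is $k_0=0$ but $(n_1,\dots,n_{k-1})\neq(0,\dots,0)$. The idea is to run the backward induction of Lemma \ref{in} in reverse: suppose toward a contradiction that there is some $x\in(-\pi,\pi)$ with $x^+_i(x)\in(-\pi,\pi)$ for \emph{all} $i\in\{1,\dots,k-1\}$. The critical equations read $x^+_i+\tfrac{x^+_{i+1}}{b_i}=-(\text{integer shift involving }n_1,\dots,n_i)\mp(-1)^i\tfrac{x_0}{c_i}$, i.e. the quantity $c_i x^+_i + c_{i-1}x^+_{i+1}$ differs from $\mp(-1)^i x_0$ by $2\pi$ times an integer. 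Reading these relations from $i=k-1$ downward and using $k_0=0$ to kill the accumulated integer part in the top slot, one shows inductively that each $c_{i-1}x^+_i$ is forced to lie in an interval of length $<2\pi$ around a fixed value, so the integer shift in the $i$-th equation is determined; feeding this all the way down pins $n_1,\dots,n_{k-1}$ and forces them all to be $0$, a contradiction. Concretely, I expect that $k_0=0$ together with $x^+_{k-1}(x)\in(-\pi,\pi)$ forces $x^+_{k-1}(x)$ to be exactly the value $\tfrac{c_{k-2}x\pm x_0}{c_{k-1}}$ from the all-zero case (the $n$-dependent integer must vanish), and then a downward induction identical to Lemma \ref{in}'s shows each $x^+_i(x)$ equals its $n=0$ counterpart, whence each $n_j=0$. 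The same argument with $+$ replaced by $-$ gives the index $j$ for $x^-$.

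The main obstacle is the combinatorics of part (1): I need to track how the integer shifts $\sum_{j=1}^i(-1)^{i-j}2n_jc_{j-1}\pi/c_i$ in the critical equations interact with the constraint that each $x^+_i(x)$ stays inside $(-\pi,\pi)$, and show that requiring \emph{all} of them to stay inside, given $k_0=0$, forces the shifts to vanish. The clean way to phrase this is probably to note that $x^\pm_i(x)$ and the $n=0$ solution differ by $(-1)^{k-i}c_{i-1}\cdot\tfrac{2\pi}{q}\cdot(\text{partial sum of }k_0)$ plus lower terms, so the difference at level $i$ is $2\pi$ times a nonzero integer multiple of $c_{i-1}/q$ unless the relevant partial sums vanish; since $c_{i-1}/q<1$ this does not immediately exceed the window, so one really does need the simultaneous constraint across all $i$, and the induction must be set up so that a nonzero $n_j$ at the first index where it occurs propagates an error of size $\geqslant 2\pi$ into some $x^\pm_i(x)$. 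I would organize this as the stronger inductive statement, in the style of Lemma \ref{in}, that $-\pi+\tfrac{c_{i-1}\pi}{r}<x^\pm_i(x)<\pi-\tfrac{c_{i-1}\pi}{r}$ together with the all-$n$-zero identification, failing at the first nonzero $n_j$.
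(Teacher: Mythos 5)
Your part (2) matches the paper's argument: both reduce to a single triangle inequality on
$$x^\pm_{k-1}(x)=\frac{x}{b_{k-1}}+\frac{2k_0\pi}{q}\pm(-1)^{k-1}\frac{x_0}{q},$$
bounding the "good" part $\big|\tfrac{x}{b_{k-1}}\pm(-1)^{k-1}\tfrac{x_0}{q}\big|$ by $\pi$ and the shift by $2\pi$. Your version with $\frac{(c_{k-2}+1)\pi}{q}$ is fine; either route lands at $|x^\pm_{k-1}(x)|\geqslant\pi$.

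Part (1) has a genuine gap. The claim that "$k_0=0$ together with $x^+_{k-1}(x)\in(-\pi,\pi)$ forces $x^+_{k-1}(x)$ to be exactly the value from the all-zero case, and then a downward induction identical to Lemma \ref{in}'s shows each $x^+_i(x)$ equals its $n=0$ counterpart, whence each $n_j=0$" is not correct. What actually happens if you write $\delta_i=x^+_i(x)-x^{+,(0)}_i(x)$ (the deviation from the $n=0$ value) and $s_i=\sum_{j=1}^i(-1)^{i-j}n_jc_{j-1}$, is that $\delta_i=-\tfrac{\delta_{i+1}}{b_i}-\tfrac{2\pi s_i}{c_i}$. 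When $k_0=0$ and $i_0$ is the \emph{largest} index with $n_{i_0}\neq 0$ (the paper shows $i_0\geqslant 2$), one has $s_m=0$ for all $m\geqslant i_0$, so $\delta_m=0$ for $m\geqslant i_0$, but then $s_{i_0-1}=n_{i_0}c_{i_0-1}$ and hence $\delta_{i_0-1}=-2\pi n_{i_0}$, which is a nonzero multiple of $2\pi$, \emph{not} zero. For indices below $i_0-1$ the deviations can partly cancel and even return to $0$ (e.g.\ with $k=4,\ c_1=2,\ c_2=3,\ (n_1,n_2,n_3)=(1,2,1)$ one gets $\delta_3=0,\ \delta_2=-2\pi,\ \delta_1=0$), so the deviations are not pinned to zero all the way down and one cannot conclude "each $n_j=0$" by a straight induction. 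What the paper actually does is a one-step dichotomy at the last nonzero index: either $x^+_{i_0}(x)\notin(-\pi,\pi)$, or else $\big|\tfrac{x^+_{i_0}(x)}{b_{i_0-1}}\pm\tfrac{x_0}{c_{i_0-1}}\big|<\pi$ and, since $k_0=0$ gives $\big|\sum_{j=1}^{i_0-1}(-1)^jn_jc_{j-1}\big|=|n_{i_0}|c_{i_0-1}$, the reverse triangle inequality gives $|x^+_{i_0-1}(x)|\geqslant 2\pi|n_{i_0}|-\pi\geqslant\pi$. Your backward-sweep intuition is close, but it needs to be reorganized around this single decisive step at $i_0-1$ rather than a global "pin everything to zero" argument, since the latter is not what the recursion for $\delta_i$ gives you.
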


\begin{proof} We prove (1) and (2) for $x^+_i(x),$ and that for $x^-_j(x)$ is similar. 

For (1), let $x\in(-\pi,\pi)$ and let $n_{i_0}$ be the last non-zero number in $n_i$'s, ie. $n_{i_0}\neq 0$ and $n_j=0$ for all $j>i_0.$ Then $i_0\geqslant 2$ since otherwise $n_{i_0}=n_1=(-1)^{k-1}k_0=0,$ which is a contradiction.

 If $x^+_{i_0}(x)\notin(-\pi,\pi),$ then we are done. 

If $x^+_{i_0}(x)\in(-\pi,\pi),$ then we have
$$\Big|\frac{x^+_{i_0}(x)}{b_{i_0-1}}\pm\frac{x_0}{c_{i_0-1}}\Big|<\frac{\pi}{b_{i_0-1}}+\frac{\pi}{c_{i_0-1}}=\frac{c_{i_0-2}+1}{c_{i_0-1}}\pi\leqslant \pi,$$
where the equality and the last inequality come from that $c_i=b_ic_{i-1}$ and $c_{i-1}+1\leqslant c_i.$ 

Since
$$|k_0|=\Big|\sum_{j=1}^{i_0}(-1)^jn_jc_{j-1}\Big|=|n_{i_0}c_{i_0-1}|-\Big|\sum_{j=1}^{i_0-1}(-1)^jn_jc_{j-1}\Big|=0,$$
we have
$$\Big|\sum_{j=1}^{i_0-1}(-1)^j\frac{n_jc_{j-1}}{c_{i_0-1}}\Big|=|n_{i_0}|.$$
Then
\begin{equation*}
\begin{split}
|x^+_{i_0-1}|&=\Big|\frac{x^+_{i_0}(x)}{b_{i_0-1}}+\sum_{j=1}^{i_0-1}(-1)^{i_0-1-j}\frac{2n_jc_{j-1}\pi}{c_{i_0-1}}-(-1)^{i_0-1}\frac{x_0}{c_{i_0-1}}\Big|\\
&\geqslant \bigg|\Big|\sum_{j=1}^{i_0-1}(-1)^{j}\frac{2n_jc_{j-1}\pi}{c_{i_0-1}}\Big|-\Big|\frac{x^+_{i_0}(x)}{b_{i_0-1}}-(-1)^{i_0-1}\frac{x_0}{c_{i_0-1}}\Big|\bigg|\\
&\geqslant 2|n_{i_0}|\pi -\pi \geqslant \pi.
\end{split}
\end{equation*}

The proof for (2) is similar.  Since $-\pi<x<\pi,$ we have 
$$\Big|\frac{x}{b_{k-1}}\pm\frac{x_0}{q}\Big|<\frac{\pi}{b_{k-1}}+\frac{\pi}{q}=\frac{c_{k-2}+1}{q}\pi\leqslant \pi.$$
If $|k_0|\geqslant q,$ then 
\begin{equation*}
\begin{split}
|x^\pm_{k-1}|&=\Big|\frac{x}{b_{k-1}}+\frac{2k_0\pi}{q}\pm(-1)^{k-1}\frac{x_0}{q}\Big|\\
&\geqslant \bigg|\Big|\frac{2k_0\pi}{q}\Big|-\Big|\frac{x}{b_{k-1}}\pm(-1)^{k-1}\frac{x_0}{q}\Big|\bigg|\geqslant 2\pi -\pi=\pi.
\end{split}
\end{equation*}
\end{proof}

Let
$$\widehat {F^\pm_r}(k_0, k_1, k_2)=\int_{\mathcal D_{\frac{\delta}{2}}}e^{\frac{r}{4\pi i}V_r^{\pm,(k_0, k_1, k_2)}(x,y)}dxdy.$$
Then by Propositions \ref{n} and Lemma \ref{4.9}, we have the following

\begin{proposition}\label{other'}
\begin{enumerate}[(1)]
\item For $(n_1,\dots,n_k,n)\neq (0,\dots,0),$  
\begin{equation*}
\begin{split}
\Big|\widehat {f^\pm_r}(n_1,\dots, n_k,n)\Big|\leqslant O\Big(r^{\frac{k+3}{2}}\Big)\Big|\widehat{F^\pm_r}(k_0,k_1,k_2)\Big|.
\end{split}
\end{equation*}

\item  If $(n_1,\dots,n_k,n)\neq (0,\dots,0)$ with $k_0=0$ or with $|k_0|\geqslant q,$ then
\begin{equation*}
\begin{split}
\Big|\widehat {f^\pm_r}(n_1,\dots, n_k,n)\Big|\leqslant O\Big(r^{\frac{k+2}{2}}\Big)\Big|\widehat{F^\pm_r}(k_0,k_1,k_2)\Big|.
\end{split}
\end{equation*}
\end{enumerate}
\end{proposition}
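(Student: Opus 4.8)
The plan is to read both estimates off directly from Proposition \ref{n}, with Lemma \ref{4.9} used only to supply the hypothesis of its sharper conclusion. For part (1) there is essentially nothing to prove once the notation is aligned: after setting $k_1=n_k$ and $k_2=n$, the first inequality of Proposition \ref{n} is precisely $|\widehat{f^\pm_r}(n_1,\dots,n_{k-1},n_k,n)|\leqslant O(r^{(k+3)/2})\,\big|\int_{\mathcal D_{\delta/2}}e^{\frac{r}{4\pi i}V_r^{\pm,(k_0,k_1,k_2)}(x,y)}\,dx\,dy\big|$, and the integral on the right is by definition $\widehat{F^\pm_r}(k_0,k_1,k_2)$. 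The only genuine bookkeeping I would do is to confirm that the completion-of-squares identity stated just before Proposition \ref{n} — isolating the squares in $x_1,\dots,x_{k-1}$, with a real constant $C$ independent of $x$, and with the remaining linear-in-$(x,y)$ terms as displayed — holds verbatim on each of $D$, $D'$, $D''$ after the appropriate change in the argument of $\varphi_r$; this is straightforward algebra using the $b_i$, $c_i$ and $k_0=\sum_{j=1}^{k-1}(-1)^{k-j}n_jc_{j-1}$.

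For part (2) I would instead invoke the second inequality of Proposition \ref{n}, which gains the extra factor $r^{-1/2}$ as soon as, for every $x\in(-\pi,\pi)$, at least one of the solutions $x^\pm_i(x)$ with $i\leqslant k-1$ of the critical system lies outside $(-\pi,\pi)$. This is exactly what Lemma \ref{4.9} provides in the two regimes at hand: if $|k_0|\geqslant q$ then Lemma \ref{4.9}(2) gives $x^\pm_{k-1}(x)\notin(-\pi,\pi)$ for all $x$, so one takes $i=k-1$; if $k_0=0$ and $(n_1,\dots,n_{k-1})\neq(0,\dots,0)$ then Lemma \ref{4.9}(1) produces such an index for the $+$ branch, and its symmetric assertion does the same for the $-$ branch. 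Feeding this into Proposition \ref{n} and again using $\widehat{F^\pm_r}(k_0,k_1,k_2)=\int_{\mathcal D_{\delta/2}}e^{\frac{r}{4\pi i}V_r^{\pm,(k_0,k_1,k_2)}(x,y)}\,dx\,dy$ yields $|\widehat{f^\pm_r}(n_1,\dots,n_k,n)|\leqslant O(r^{(k+2)/2})\,|\widehat{F^\pm_r}(k_0,k_1,k_2)|$.

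The hard part will be the remaining configuration formally allowed in (2), namely $n_1=\dots=n_{k-1}=0$ with $(n_k,n)\neq(0,0)$ (which forces $k_0=0$): there Lemma \ref{4.9} does not apply and the argument above really does fail, since the $x^\pm_i(x)$ for $i\leqslant k-1$ are then exactly the functions of Lemma \ref{in} and hence lie inside $(-\pi,\pi)$, so no $x_i$-integral gains the factor $r^{-1/2}$. My plan for this case is not to push Proposition \ref{n} but to estimate the two-dimensional integral $\widehat{F^\pm_r}(0,k_1,k_2)$ directly: for a nonzero integer vector $(k_1,k_2)$ the linear shift $-4k_1\pi x-4k_2\pi y$ of the phase $V^\pm_r$ moves the pertinent critical value strictly below the one governing $\widehat{f^\pm_r}(0,\dots,0)$, so these Fourier coefficients are negligible outright — this is the analysis carried out in Section \ref{other} — and for every later application one may as well read (2) only for $(n_1,\dots,n_{k-1})\neq(0,\dots,0)$, which is the content actually used.
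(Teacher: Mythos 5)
Your proof of (1) and the generic case of (2) coincides with the paper's: the paper offers no independent argument, simply citing Proposition \ref{n} and Lemma \ref{4.9}, and your alignment of notation ($k_1 = n_k$, $k_2 = n$, $\widehat{F^\pm_r}$ being the two-dimensional integral) and your invocation of Lemma \ref{4.9}(1) for $k_0=0$ and Lemma \ref{4.9}(2) for $|k_0|\geqslant q$ is precisely the intended deduction.

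The observation about the residual case $n_1 = \dots = n_{k-1} = 0$, $(n_k, n) \neq (0,0)$ — which forces $k_0 = 0$ and therefore falls under the literal wording of (2), yet is excluded from Lemma \ref{4.9}(1) — is a genuine and correct catch. In that configuration all $x^\pm_i(x)$, $i \leqslant k-1$, lie in $(-\pi,\pi)$ by Lemma \ref{in}, so only the first inequality of Proposition \ref{n} is available and, as you say, $\widehat{f^\pm_r}$ is in fact comparable to $r^{(k+3)/2}\,\widehat{F^\pm_r}$; the paper's (2) should carry the implicit hypothesis $(n_1,\dots,n_{k-1}) \neq (0,\dots,0)$, consistent with Lemma \ref{4.9}. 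Your check that this costs nothing downstream is also right: in the proof of Proposition \ref{4.1}(2), that configuration has $\tfrac{k_0}{q}+k_1 = n_k$ and $(n_k, n) \neq (0,0)$, so it falls into the branch $(\tfrac{k_0}{q}+k_1, k_2) \neq (0,0)$, which invokes Proposition \ref{other'}(1) together with Lemmas \ref{k2} and \ref{k1} and never calls on part (2). Your alternative route for the residual case — bounding $\widehat{F^\pm_r}(0,k_1,k_2)$ directly via the Section \ref{other} contour deformations — is a valid way to show negligibility of $\widehat{f^\pm_r}$ there, but it proves a different (weaker) statement than the literal inequality in (2); the cleaner resolution is simply to adopt the hypothesis $(n_1,\dots,n_{k-1}) \neq (0,\dots,0)$, which is both what Lemma \ref{4.9} supports and what the application uses.
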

 The integrals $\hat F_r(k_0,k_1,k_2)$ will be further estimated in Section \ref{Asy}.
 \\

 We notice that $V_r^\pm(x,y)$ and $V_r^{\pm,(k_0, k_1, k_2)}(x,y)$ define holomorphic functions on the following regions $D_{\mathbb C, \delta},$ $D'_{\mathbb C, \delta}$ and $D''_{\mathbb C,\delta}$ of $\mathbb C^2,$  where for $\delta\geqslant 0,$
$$D_{\mathbb C, \delta}=\Big\{(x,y)\in \mathbb C^2\ \Big|\ \delta<\mathrm{Re}(y)+\mathrm{Re}(x)<\frac{\pi}{2}-\delta, \delta < \mathrm{Re}(y)-\mathrm{Re}(x)< \frac{\pi}{2}-\delta\Big\},$$
$$D'_{\mathbb C,\delta}=\Big\{(x,y)\in \mathbb C^2\ \Big|\ \delta < \mathrm{Re}(y)+\mathrm{Re}(x)<\frac{\pi}{2}-\delta, \pi+\delta< \mathrm{Re}(y)-\mathrm{Re}(x) < \frac{3\pi}{2}-\delta \Big\}$$
and
$$D''_{\mathbb C,\delta}=\Big\{(x,y)\in \mathbb C^2\ \Big|\ \pi+\delta < \mathrm{Re}(y)+\mathrm{Re}(x) < \frac{3\pi}{2}-\delta, \delta<\mathrm{Re}(y)-\mathrm{Re}(x) <\frac{\pi}{2}-\delta\Big\}.$$

When $\delta=0,$ we denote the corresponding regions by $D_{\mathbb C},$ $D'_{\mathbb C}$ and $D''_{\mathbb C}.$
 
We consider the following holomorphic functions
 $$V^\pm(x,y)=\frac{-px^2\pm2x_0 x}{q}-2\pi x+4xy-\mathrm{Li}_2(e^{-2i(y+x)})+\mathrm{Li}_2(e^{2i(y-x)})-\Big(\frac{p'}{q}+a_0\Big)x_0^2$$
 on $D_{\mathbb C},$  $D'_{\mathbb C}$ and $D''_{\mathbb C},$ which will play a crucial role in Section \ref{Asy} in the estimate of the Fourier coefficients.


\section{Geometry of the critical points}\label{geometry}

The main result of this Section is Proposition \ref{Vol} which shows that the critical value of the functions $V^\pm$ defined in the previous section has real part the volume of $M_{K_\theta}$ and imaginary part the Chern-Simons invariant of $M_{K_\theta}$ as defined in Section \ref{CCS}. The key observation is Lemma \ref{=} that the system of critical point equations of $V^\pm$ is equivalent to the system of hyperbolic gluing equations (consisting of an edge equation and an equation of the $\frac{p}{q}$ Dehn-filling with prescribed cone angle) for a particular ideal triangulation of the figure-$8$ knot complement.

According to Thurston's notes \cite{T}, the complement of the figure-$8$ knot has an ideal triangulation as drawn in Figure \ref{figure-8}. We let $A$ and $B$ be the shape parameters of the two ideal tetrahedra 
and let $A'=\frac{1}{1-A},$ $A''=1-\frac{1}{A},$ $B'=\frac{1}{1-B}$ and $B''=1-\frac{1}{B}.$ 

\begin{figure}[htbp]
\centering
\includegraphics[scale=0.3]{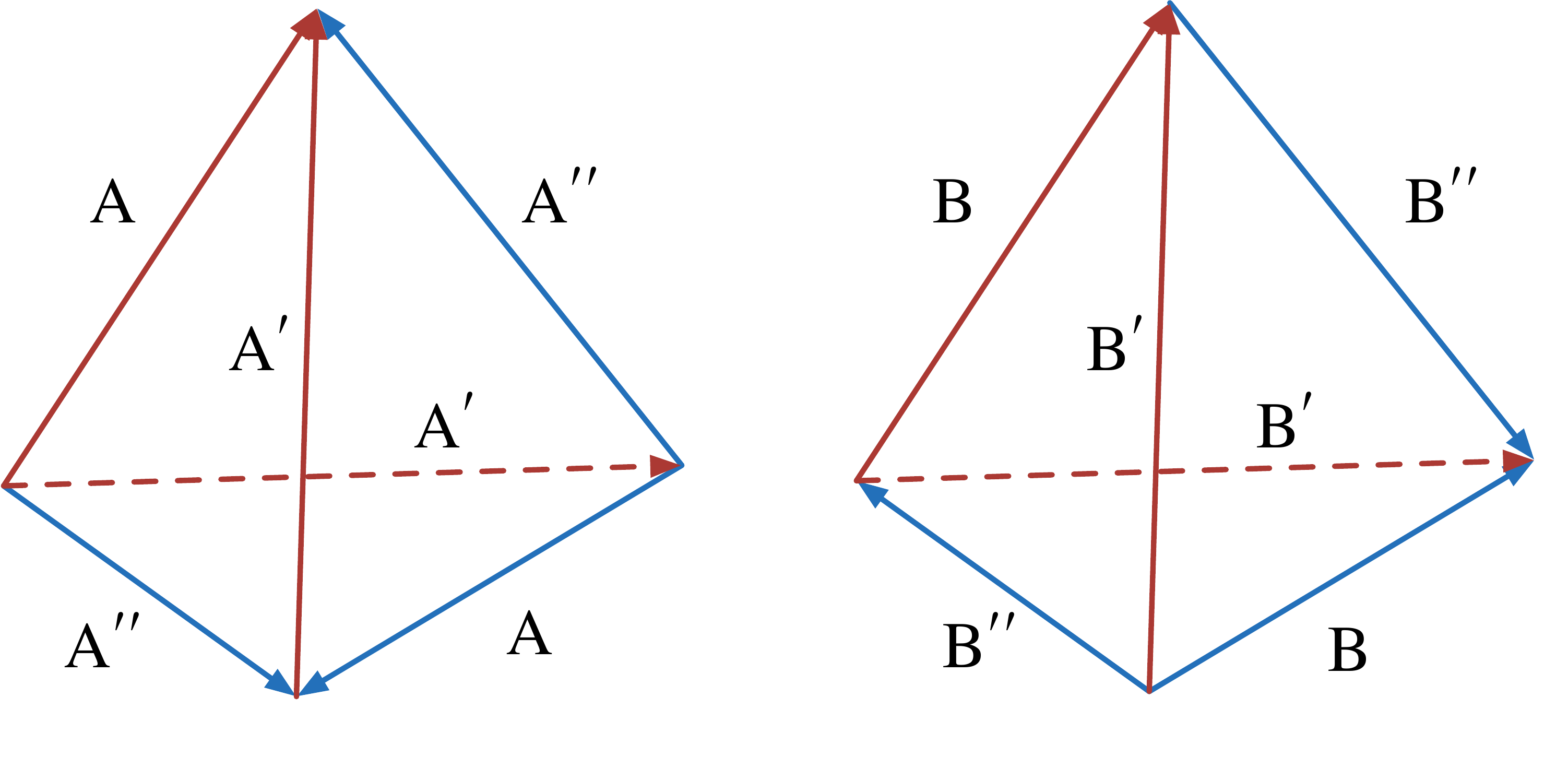}
\caption{An ideal triangulation of the figure-$8$ knot complement}
\label{figure-8}
\end{figure}

In Figure \ref{holonomy} is a fundamental domain of the boundary of a tubular neighborhood of $K_{4_1}.$

\begin{figure}[htbp]
\centering
\includegraphics[scale=0.3]{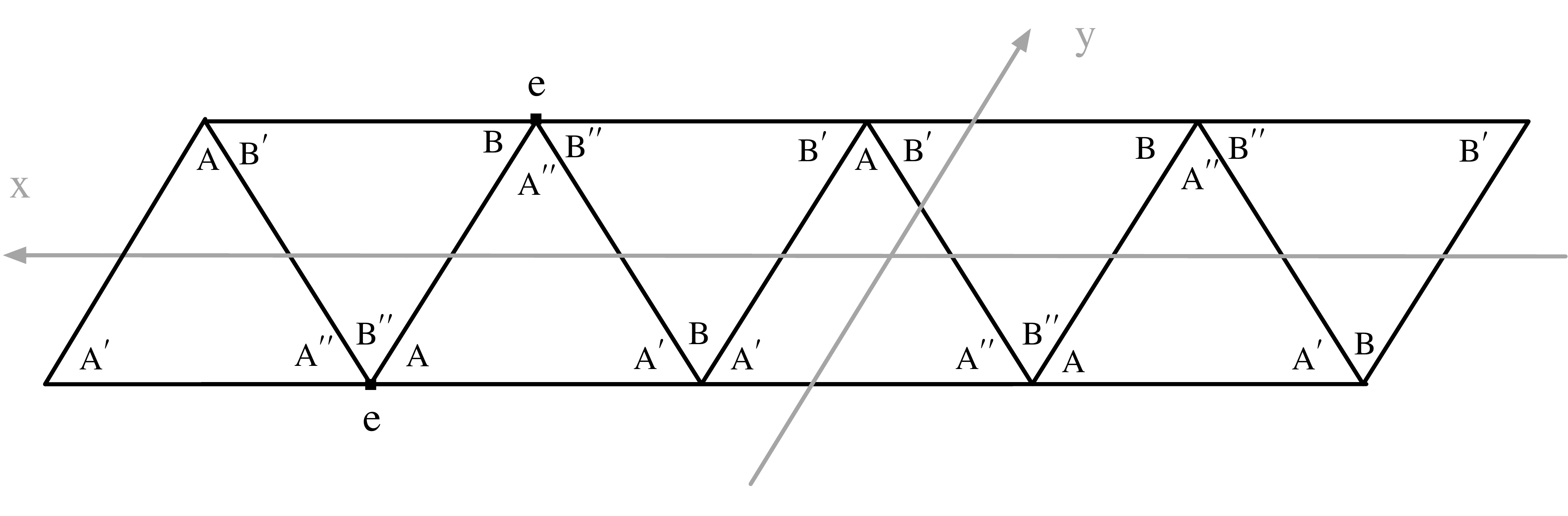}
\caption{Combinatorics around the boundary}
\label{holonomy}
\end{figure}

Recall that for $z\in \mathbb C\setminus (-\infty,0],$ the logarithmic function is defined by
$$\log z=\ln|z|+i\arg z$$ with $-\pi<\arg z<\pi.$

Then the holonomy around the edge $e$ is
$$\mathrm{H}(e)=\log A+2\log A''+\log B+2\log B'',$$
and the holonomies of the curves $x$ and $y$ are respectively
$$\mathrm{H}(x)=2\log B+2\log B''-2\log A-2\log A''$$ and
$$\mathrm{H}(y)=\log B'-\log A''.$$

By \cite{T}, we can choose the meridian $m=y$ and the longitude $l=x+2y.$ Hence
$$\mathrm{H}(m)=\log B'-\log A'',$$
and
\begin{equation*}
\begin{split}
\mathrm{H}(l)=2\pi i-2\log A-4\log A''.
\end{split}
\end{equation*}
Then for the incomplete hyperbolic metric that gives the hyperbolic cone metric with cone angle $\theta,$  the system of hyperbolic gluing equations 
\begin{equation*}
\left \{\begin{array}{l}\mathrm{H}(e)=2\pi i\\
\\
p \mathrm{H}(m)+q\mathrm{H}(l)= \theta i
\end{array}\right.
\end{equation*} 
can be written as
\begin{equation}\label{HG}
\left \{\begin{array}{l}
\log A+2\log A''+\log B+2\log B''=2\pi i\\
\\
p(\log B'-\log A'')+q(2\pi i-2\log A-4\log A'')=\theta i.
\end{array}\right.
\end{equation}

From now on, we let $\theta=|2x_0|$ and, by switching the $+$ and $-$ if necessary, let 
$$V^\pm(x,y)=\frac{-px^2\pm \theta x}{q}-2\pi x+4xy-\mathrm{Li}_2(e^{-2i(y+x)})+\mathrm{Li}_2(e^{2i(y-x)})-\Big(\frac{p'}{q}+a_0\Big)\frac{\theta^2}{4}.$$
 
 Taking partial derivatives of $V^\pm,$ we have 
$$\frac{\partial V^\pm}{\partial x}=\frac{-2px\pm \theta}{q}+4y-2\pi-2i\log(1-e^{-2i(y+x)})+2i\log(1-e^{2i(y-x)})$$
and
$$\frac{\partial V^\pm}{\partial y}=4x-2i\log(1-e^{-2i(y+x)})-2i\log(1-e^{2i(y-x)}).$$
Hence the system of critical point equations of $V^\pm(x,y)$ is 
\begin{equation}\label{C}
\left \{\begin{array}{l}
4x-2i\log(1-e^{-2i(y+x)})-2i\log(1-e^{2i(y-x)})=0\\
\\
\frac{-2px\pm u}{q}+4y-2\pi-2i\log(1-e^{-2i(y+x)})+2i\log(1-e^{2i(y-x)})=0.
\end{array}\right.
\end{equation}

\begin{lemma}\label{=} 
\begin{enumerate}[(1)]
\item In $D_{\mathbb C},$ if we let $A=e^{2i(y+x)}$ and $B=e^{2i(y-x)},$  then
 the system of critical point equations (\ref{C}) of $V^+$ is equivalent to the system of hyperbolic glueing equations (\ref{HG}).
 
 \item In $D_{\mathbb C},$ if we let $A=e^{2i(y-x)}$ and $B=e^{2i(y+x)},$ then the system of critical point equations (\ref{C}) of $V^-$ is equivalent to the system of hyperbolic glueing equations (\ref{HG}).
 \end{enumerate}
\end{lemma}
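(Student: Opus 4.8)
The plan is to verify directly that under the substitution $A=e^{2i(y+x)}$, $B=e^{2i(y-x)}$ (resp. $A=e^{2i(y-x)}$, $B=e^{2i(y+x)}$ for $V^-$), the two equations in \eqref{C} transform into the two equations in \eqref{HG}. First I would translate all the logarithmic holonomy expressions into the variables $x,y$. Using $A=e^{2i(y+x)}$ we get $A''=1-\frac{1}{A}=1-e^{-2i(y+x)}$, hence $\log A'' = \log(1-e^{-2i(y+x)})$ up to the issue of branch choices, and similarly $\log A' = \log\frac{A}{A-1}$; likewise $B''=1-e^{-2i(y-x)}$, and one should note that on $D_{\mathbb C}$ one has $e^{2i(y-x)}$ near the unit circle so that $\log(1-e^{2i(y-x)})$ and $\log(1-e^{-2i(y-x)})$ differ by a controlled purely imaginary shift. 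The key computation is then to expand $\mathrm{H}(e)=\log A+2\log A''+\log B+2\log B''$ and the Dehn filling combination $p\,\mathrm{H}(m)+q\,\mathrm{H}(l)$ in terms of these logarithms, and to recognize the resulting expressions as precisely the left-hand sides of the two lines of \eqref{C} (after multiplying through by suitable constants: the first equation of \eqref{C}, divided by $-2i$, should match $\mathrm{H}(e)=2\pi i$, and the second, suitably scaled, should match the Dehn-filling equation with $\theta = |2x_0|$).

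The second step is the bookkeeping of logarithm branches, which is where the real work lies. The functions $\mathrm{Li}_2$ in $V^\pm$ have derivatives $\frac{\partial}{\partial z}\mathrm{Li}_2(e^{2iz}) = -2i\log(1-e^{2iz})$ only when $1-e^{2iz}$ avoids the branch cut $(-\infty,0]$ of $\log$; on the regions $D_{\mathbb C}$, $D'_{\mathbb C}$, $D''_{\mathbb C}$ one must check that $\mathrm{Re}(y\pm x)$ lies in ranges where $1-e^{2i(y\pm x)}$ (or its conjugate) stays in the cut plane, so that the partial derivative formulas displayed before \eqref{C} are literally correct. Then, matching to \eqref{HG}, the holonomy $\mathrm{H}(l)=2\pi i - 2\log A - 4\log A''$ contains an explicit $2\pi i$ coming from the longitude going around the cusp; tracking where this $2\pi i$ appears in the critical-point equations (it gets absorbed into the $-2\pi x$ term and the constant rearrangement of $V^\pm$) is the delicate point. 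I would handle this by computing $\exp$ of both sides of each equation — passing to the multiplicative form kills all the $2\pi i \mathbb Z$ ambiguities — verify the equations agree multiplicatively, and then argue that on the connected region $D_{\mathbb C}$ containing the relevant solution (near the complete structure / geometric solution) the additive equations agree as well because both sides are continuous and agree at one point.

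Concretely, for part (1): set $a = y+x$, $b = y-x$, so the first equation of \eqref{C} reads $2(a-b) - 2i\log(1-e^{-2ia}) - 2i\log(1-e^{2ib}) = 0$, i.e. $i(a-b) + \log(1-e^{-2ia}) + \log(1-e^{2ib}) = 0$. On the other side, $\log A + 2\log A'' = \log\frac{A}{(1-1/A)^{-2}}$... I would instead use $A\cdot (A'')^2 = e^{2ia}(1-e^{-2ia})^2$ and $B\cdot(B'')^2 = e^{2ib}(1-e^{-2ib})^2$, so $\mathrm{H}(e) = 2\pi i$ exponentiates to $e^{2i(a+b)}(1-e^{-2ia})^2(1-e^{-2ib})^2 = 1$, which after taking a square root and adjusting signs is equivalent to $e^{i(a+b)}(1-e^{-2ia})(1-e^{-2ib}) = \pm 1$; comparing with the exponentiated first critical equation $e^{i(a-b)}(1-e^{-2ia})(1-e^{2ib}) = 1$ and using $1-e^{2ib} = -e^{2ib}(1-e^{-2ib})$ shows these match up to sign, and the sign is pinned down on $D_{\mathbb C}$. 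A parallel but longer computation handles the Dehn-filling equation, using $\mathrm{H}(m) = \log B' - \log A''$ and $\mathrm{H}(l) = 2\pi i - 2\log A - 4\log A''$ together with $\theta = |2x_0|$ and the constant term $-(\frac{p'}{q}+a_0)\frac{\theta^2}{4}$ in $V^\pm$, which contributes nothing to the partials but is recorded for Proposition \ref{Vol}. Part (2) is identical after swapping the roles of $A$ and $B$, which corresponds exactly to the sign swap in the $\pm\theta x/q$ term of $V^\pm$. \textbf{The main obstacle} I anticipate is not any single calculation but the consistent tracking of the $\log$ branches and the stray $2\pi i$ from the longitude across the three regions $D_{\mathbb C}, D'_{\mathbb C}, D''_{\mathbb C}$; passing to exponentiated equations to neutralize the $2\pi i\mathbb Z$ ambiguity, and then fixing the remaining constant by continuity and a check at the complete structure, is the cleanest way to make this rigorous.
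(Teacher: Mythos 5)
Your proposal and the paper share the same skeleton: substitute $A=e^{2i(y+x)}$, $B=e^{2i(y-x)}$ (and the swap for $V^-$), expand the edge and Dehn-filling holonomies in terms of $x,y$, and match them term-by-term to the two lines of~(\ref{C}). Where you diverge is in the branch bookkeeping. The paper resolves it directly: on $D_{\mathbb C}$ one has $0<\mathrm{Re}(y\pm x)<\frac{\pi}{2}$, which pins down all six logarithms unambiguously, and the authors simply write
$\log A = 2i(y+x)$, $\log A'' = \log(1-e^{-2i(y+x)})$, $\log B'=-\log(1-e^{2i(y-x)})$, $\log B''=\pi i - 2i(y-x)+\log(1-e^{2i(y-x)})$, etc.,
after which the edge equation becomes literally $i$ times the first critical equation (once the $2\pi i$ from $\mathrm{H}(l)$ and the $2\pi i$ in $\log B''$ cancel), and the second critical equation rearranges to $\frac{p\mathrm{H}(m)i+\theta}{q}+\mathrm{H}(l)i=0$ using $\mathrm{H}(m)=2xi$ from the first. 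Your alternative --- exponentiate to kill the $2\pi i\mathbb Z$ ambiguity, verify the multiplicative identity, then fix the remaining additive constant by continuity on the connected $D_{\mathbb C}$ --- is sound and would yield the same conclusion, but it is more roundabout and postpones the one thing the paper handles cleanly up front. Two small overstatements in your write-up: the lemma is stated only for $D_{\mathbb C}$, so there is no branch-tracking across $D'_{\mathbb C}$ and $D''_{\mathbb C}$ to worry about; and the "one point" at which you fix the additive constant need not be the geometric solution --- any convenient point of $D_{\mathbb C}$ (say $x=0$, $y$ real) does the job, since the discrepancy is a locally constant integer multiple of $2\pi i$. With those caveats, your strategy is correct and would carry through.
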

 
\begin{proof} 
For (1), in $D_{\mathbb C}$ we have
\begin{equation*}
\left \{\begin{array}{l}
\log A= 2i(y+x),\\
 \log A'=\pi i-2i(y+x)-\log(1-e^{-2i(y+x)}),\\
 \log A''=\log(1-e^{-2i(y+x)}),\\
 \log B=2i(y-x),\\
\log B'=-\log(1-e^{2i(y-x)}),\\
 \log B''=\pi i-2i(y-x)+\log (1-e^{2i(y-x)}).
\end{array}\right.
\end{equation*}

For one direction, we assume that $(x,y)\in D_{\mathbb C}$ is a solution of the system of critical equations (\ref{C}) with the ``$+$'' chosen. Then from the first equation of (\ref{C})
\begin{equation*}
\begin{split}
\mathrm{H}(e)=\log A+2\log A''+\log B+2\log B''=2\pi i,
\end{split}
\end{equation*}
hence the edge equation is satisfied. Next, we compute $\mathrm{H}(m)$ and $\mathrm{H}(l).$  From the first equation of (\ref{C}), we have
\begin{equation}\label{m}
\begin{split}
\mathrm{H}(m)=\log B'-\log A''=2x i;
\end{split}
\end{equation}
and from (\ref{m}) we have
\begin{equation}\label{l}
\begin{split}
\mathrm{H}(l)&=2\pi i-2\log A-4\log A''\\
&=2\pi i-2\log A+(4xi-2\log B')-2\log A''\\
&=-4yi+2\pi i-2\log(1-e^{-2i(y+x)})+2\log(1-e^{2i(y-x)}).
\end{split}
\end{equation}
Equations (\ref{m}), (\ref{l}) and the second equation of (\ref{C}) then imply that 
$$\frac{p\mathrm{H}(m)i+\theta}{q}+\mathrm{H}(l)i=0,$$
which is equivalent to the $\frac{p}{q}$ Dehn-filling equation with cone angle $\theta$
$$p\mathrm{H}(m)+q\mathrm{H}(l)=\theta i.$$

For the other direction, assume that $(A,B)$ is a  solution of (\ref{HG}). Then the edge equation implies the first equation of (\ref{C}); and (\ref{m}), (\ref{l}) and the Dehn-filling equation with cone angle $\theta$ imply that the second equation of (\ref{C}).
\medskip

For (2), we have \begin{equation*}
\left \{\begin{array}{l}
 \log A=2i(y-x),\\
\log A'=-\log(1-e^{2i(y-x)}),\\
 \log A''=\pi i-2i(y-x)+\log (1-e^{2i(y-x)}),\\
\log B= 2i(y+x),\\
 \log B'=\pi i-2i(y+x)-\log(1-e^{-2i(y+x)}),\\
 \log B''=\log(1-e^{-2i(y+x)}),\\
\end{array}\right.
\end{equation*} 
in $D_{\mathbb C},$ and the rest of the proof is very similar to that of (1). Namely, by a computation, we have
$$\mathrm{H}(e)=2\pi i$$
which gives the edge equation. We also have
\begin{equation}\label{m'}
\mathrm{H}(m)=-2xi,
\end{equation}
and
\begin{equation}\label{l'}
\mathrm{H}(l)=4yi-2\pi i+2\log(1-e^{-2i(y+x)})-2\log(1-e^{2i(y-x)}),
\end{equation}
which, together with the second equation of (\ref{C}) imply that
$$\frac{-p\mathrm{H}(m)i-\theta}{q}-\mathrm{H}(l)i=0,$$
which is equivalent to the $\frac{p}{q}$ Dehn-filling equation with cone angle $\theta$
$$p\mathrm{H}(m)+q\mathrm{H}(l)=\theta i.$$

For the other direction, assume that $(A,B)$ is a  solution of (\ref{HG}). Then the edge equation implies the first equation of (\ref{C}); and (\ref{m'}), (\ref{l'}) and the Dehn-filling equation with cone angle $\theta$ imply that the second equation of (\ref{C}).
\end{proof}

By Thurston's notes\,\cite{T} and Hodgson\,\cite{H} (see also \cite[Section 5.7]{CHK}), for each relatively primed $(p,q)\neq (\pm 1,0)$ and every $u\in (0,2\pi),$ there is a unique solution $A_0$ and $B_0$ of (\ref{HG}) with $\mathrm{Im}A_0>0$ and $\mathrm{Im}B_0>0.$ Then by Lemma \ref{=}, we have 

\begin{corollary}\label{c} For $(p,q)\neq (\pm 1,0),$ the point $$(x_0,y_0)=\Big(\frac{\log A_0-\log B_0}{4i},\frac{\log A_0+\log B_0}{4i}\Big)$$ is the unique critical point of $V^+$ in $D_{\mathbb C},$ and $(-x_0,y_0)$ is the unique critical point of $V^-$ in $D_{\mathbb C}.$
\end{corollary}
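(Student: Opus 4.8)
The plan is to deduce Corollary~\ref{c} directly from Lemma~\ref{=} together with the existence and uniqueness of Thurston's solution $(A_0,B_0)$ of (\ref{HG}); the only point requiring care is that the substitution relating the two pictures is a bijection onto the correct region.

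First I would invert the change of variables used in Lemma~\ref{=}(1). If $A=e^{2i(y+x)}$ and $B=e^{2i(y-x)}$ with the branch conventions fixed in the proof of Lemma~\ref{=}, then $2i(y+x)=\log A$ and $2i(y-x)=\log B$, so $x=\frac{\log A-\log B}{4i}$ and $y=\frac{\log A+\log B}{4i}$. Since $\log$ is the principal branch, $\mathrm{Re}(y+x)=\tfrac12\arg A$ and $\mathrm{Re}(y-x)=\tfrac12\arg B$; thus $(x,y)\in D_{\mathbb C}$ holds exactly when $\arg A\in(0,\pi)$ and $\arg B\in(0,\pi)$, i.e.\ exactly when $\mathrm{Im}\,A>0$ and $\mathrm{Im}\,B>0$. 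Conversely any $(A,B)$ with $\mathrm{Im}\,A>0$, $\mathrm{Im}\,B>0$ arises from a point of $D_{\mathbb C}$ via these formulas (one checks $e^{2i(y+x)}=e^{\log A}=A$ and similarly for $B$), and the assignment $(x,y)\mapsto(e^{2i(y+x)},e^{2i(y-x)})$ is injective on $D_{\mathbb C}$: equality of images forces $(y_1+x_1)-(y_2+x_2)\in\pi\mathbb Z$ with real part in $(-\tfrac\pi2,\tfrac\pi2)$, hence $y_1+x_1=y_2+x_2$, and likewise $y_1-x_1=y_2-x_2$. So this substitution is a bijection from $D_{\mathbb C}$ onto $\{\,\mathrm{Im}\,A>0,\ \mathrm{Im}\,B>0\,\}$.

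Next I would combine this with Lemma~\ref{=}(1), which under the above substitution identifies the critical points of $V^+$ in $D_{\mathbb C}$ with the solutions $(A,B)$ of (\ref{HG}) satisfying $\mathrm{Im}\,A>0$ and $\mathrm{Im}\,B>0$. By Thurston~\cite{T} and Hodgson~\cite{H} (for $(p,q)\neq(\pm1,0)$ and the prescribed cone angle) this solution set is the single point $(A_0,B_0)$, so $V^+$ has the unique critical point $(x_0,y_0)=\big(\frac{\log A_0-\log B_0}{4i},\frac{\log A_0+\log B_0}{4i}\big)$ in $D_{\mathbb C}$. The statement for $V^-$ is obtained the same way from Lemma~\ref{=}(2), where the roles of $A$ and $B$ in the substitution are swapped: inverting it gives $y-x=\frac{\log A_0}{2i}$ and $y+x=\frac{\log B_0}{2i}$, so $y=y_0$ and $x=\frac{\log B_0-\log A_0}{4i}=-x_0$, as claimed.

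I do not anticipate a substantial obstacle here; the work is essentially bookkeeping. The one thing to watch is that the principal-branch conventions used inside the proof of Lemma~\ref{=} agree with those valid throughout $D_{\mathbb C}$, so that no critical point is lost or spuriously created by crossing a branch cut, and that the correspondence $D_{\mathbb C}\leftrightarrow\{\,\mathrm{Im}\,A>0,\ \mathrm{Im}\,B>0\,\}$ really is onto — both of which are immediate from the computation of $\mathrm{Re}(y\pm x)$ above.
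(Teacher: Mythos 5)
Your proposal is correct and follows exactly the route the paper intends: the paragraph preceding the corollary cites Thurston and Hodgson for the existence and uniqueness of the solution $(A_0,B_0)$ of (\ref{HG}) with $\mathrm{Im}\,A_0>0$, $\mathrm{Im}\,B_0>0$, and the corollary is then read off from Lemma~\ref{=} via the change of variables you inverted. Your explicit check that $(x,y)\mapsto(e^{2i(y+x)},e^{2i(y-x)})$ is a bijection from $D_{\mathbb C}$ onto $\{\mathrm{Im}\,A>0,\ \mathrm{Im}\,B>0\}$ is the bookkeeping the paper leaves implicit, and it is carried out correctly.
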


\begin{proposition}\label{Vol} We have
\begin{enumerate}[(1)]
\item
 $$V^+(x_0,y_0)=V^-(-x_0,y_0)=i\Big(\mathrm{Vol}(M_{K_\theta})+i\mathrm{CS}(M_{K_\theta})\Big)\quad\mathrm{mod}\ \pi^2\mathbb Z.$$
\item
$$\det(\mathrm{Hess}V^+)(x_0,y_0)=\det(\mathrm{Hess}V^-)(-x_0,y_0)\neq 0.$$
\end{enumerate}
 \end{proposition}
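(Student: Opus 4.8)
The plan is to transport both assertions, via the dictionary of Lemma~\ref{=}, to the coordinates $a=\log A$, $b=\log B$ in which $V^+$ and $V^-$ become one and the same function. Under the substitution $x=\tfrac{a-b}{4i}$, $y=\tfrac{a+b}{4i}$ of Lemma~\ref{=}(1), $V^+$ becomes a holomorphic function $\widetilde V(a,b)$ near $(\log A_0,\log B_0)$; under the substitution $x=\tfrac{b-a}{4i}$, $y=\tfrac{a+b}{4i}$ of Lemma~\ref{=}(2), $V^-$ becomes the \emph{same} $\widetilde V$, because the difference of the two pullbacks equals $\big(\mathrm{Li}_2(B)+\mathrm{Li}_2(B^{-1})\big)-\big(\mathrm{Li}_2(A)+\mathrm{Li}_2(A^{-1})\big)$ plus a quadratic polynomial in $(a,b)$, which is identically zero by the inversion formula (\ref{Li2}). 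By Corollary~\ref{c} both the critical point $(x_0,y_0)$ of $V^+$ and the critical point $(-x_0,y_0)$ of $V^-$ are sent to $(\log A_0,\log B_0)$, so $V^+(x_0,y_0)=V^-(-x_0,y_0)=\widetilde V(\log A_0,\log B_0)$, which is the equality in part~(1). Since the two substitutions are affine with Jacobian determinant $\pm 8$ and $V^+$, $V^-$ have the same Hessian matrix in $(x,y)$ (they differ by a term linear in $x$), we get $\det(\mathrm{Hess}\,V^+)(x_0,y_0)=\det(\mathrm{Hess}\,V^-)(-x_0,y_0)=64\,\det(\mathrm{Hess}\,\widetilde V)(\log A_0,\log B_0)$, which is the equality in part~(2). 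It then remains to evaluate $\widetilde V(\log A_0,\log B_0)$ and to show $\det(\mathrm{Hess}\,\widetilde V)(\log A_0,\log B_0)\neq 0$.

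To evaluate $\widetilde V(\log A_0,\log B_0)$ I would substitute $e^{2i(y_0+x_0)}=A_0$, $e^{2i(y_0-x_0)}=B_0$ into $V^+$ and apply (\ref{Li2}) to the term $-\mathrm{Li}_2(A_0^{-1})$, so that the dilogarithmic part becomes $\mathrm{Li}_2(A_0)+\mathrm{Li}_2(B_0)$ plus explicit quadratic-in-$\log$ terms. Using $\mathrm{H}(m)=2ix_0$ from (\ref{m}), the Dehn-filling relation $p\,\mathrm{H}(m)+q\,\mathrm{H}(l)=i\theta$ to eliminate $\mathrm{H}(l)$, Lemma~\ref{arithmetic} to rewrite $\tfrac{p'}{q}$, and the framing curve $\gamma$ prescribed by the surgery datum (the framing $a_0$ of $K$), one reorganizes $\widetilde V(\log A_0,\log B_0)$ exactly into $\Phi(\mathrm{H}(m))-\tfrac{\mathrm{H}(m)\mathrm{H}(l)}{4}+\tfrac{i\theta\,\mathrm{H}(\gamma)}{4}$, where $\Phi=\mathrm{Li}_2(A)+\mathrm{Li}_2(B)+(\text{the quadratic correction})$. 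One then checks that this $\Phi$ is the Neumann-Zagier potential of $S^3\setminus K_{4_1}$ by verifying the two conditions (\ref{char}): the relation $\partial\Phi/\partial\mathrm{H}(m)=\mathrm{H}(l)/2$ by differentiating along the curve cut out by the edge equation $\mathrm{H}(e)=2\pi i$ (which expresses $\tfrac{d(\log A)}{d\mathrm{H}(m)}$ and $\tfrac{d(\log B)}{d\mathrm{H}(m)}$ in terms of $A$ and $B$), and $\Phi(0)=i\,\mathrm{Vol}(S^3\setminus K_{4_1})$ modulo $\pi^2\mathbb Z$ by evaluating at the complete structure $A=B=e^{i\pi/3}$ using (\ref{dilogLob}), the identities $\Lambda(\pi/6)=\tfrac32\Lambda(\pi/3)$ and $\mathrm{Vol}(S^3\setminus K_{4_1})=6\Lambda(\pi/3)$, and $\mathrm{CS}(S^3\setminus K_{4_1})=0$. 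Then (\ref{VCS}) gives $\widetilde V(\log A_0,\log B_0)=i\big(\mathrm{Vol}(M_{K_\theta})+i\,\mathrm{CS}(M_{K_\theta})\big)$ mod $\pi^2\mathbb Z$, completing part~(1).

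For the non-degeneracy of the Hessian, note that the critical equations $\partial\widetilde V/\partial a=\partial\widetilde V/\partial b=0$ are, under the biholomorphism $(a,b)\mapsto(A,B)=(e^a,e^b)$ and by Lemma~\ref{=}, precisely the gluing equations (\ref{HG}) for $S^3\setminus K_{4_1}$ with cone angle $\theta$. Hence $\det(\mathrm{Hess}\,\widetilde V)(\log A_0,\log B_0)\neq 0$ is equivalent to transversality of the system (\ref{HG}) at the solution $(A_0,B_0)$, i.e. to infinitesimal rigidity of the hyperbolic cone structure on $M$ with singular locus $K$ and cone angle $\theta$, which holds for every $\theta\leqslant 2\pi$ by the local rigidity theorem of Hodgson-Kerckhoff \cite{HK}. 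Alternatively one can compute directly from the first derivatives of $V^+$ displayed before Lemma~\ref{=} that $\det(\mathrm{Hess}\,V^+)=-\tfrac{8p}{q}(P-Q)-64PQ-32(P+Q)-16$ with $P=\tfrac{1}{A_0-1}$ and $Q=\tfrac{B_0}{1-B_0}$, and check that this cannot vanish when $\mathrm{Im}\,A_0>0$, $\mathrm{Im}\,B_0>0$ and (\ref{HG}) hold.

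The step I expect to be the main obstacle is the reorganization in the second paragraph: matching every polynomial and constant term — in particular the $\theta^2$-terms, the intervention of the surgery framing $a_0$ in the choice of $\gamma$, and the branch-of-logarithm ambiguities introduced by (\ref{Li2}) and by the formula for $\mathrm{H}(l)$ — so that the identity holds on the nose modulo $\pi^2\mathbb Z$, together with the verification that the dilogarithm combination really is the Neumann-Zagier potential, which is where the precise quadratic correction terms in $V^\pm$ and the edge equation $\mathrm{H}(e)=2\pi i$ get used. The symmetry reductions of the first paragraph and the non-degeneracy of part~(2) are comparatively routine once this is in place.
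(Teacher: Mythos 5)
Your proof of part~(1) is essentially the paper's argument reorganized: the identity $V^+(x,y)=V^-(-x,y)$ via the dilogarithm inversion formula, the decomposition of the critical value as $\Phi(\mathrm{H}(m))-\frac{\mathrm{H}(m)\mathrm{H}(l)}{4}+\frac{\theta\,\mathrm{H}(\gamma)}{4}$, and the verification that $\Phi$ satisfies the Neumann--Zagier characterization (\ref{char}), are all the same steps that appear in the paper's proof, just bookkept in the coordinates $(a,b)=(\log A,\log B)$ rather than $(x,y)$. The coordinate-change argument for the equality $\det(\mathrm{Hess}V^+)(x_0,y_0)=\det(\mathrm{Hess}V^-)(-x_0,y_0)$ is also correct and slightly slicker than the paper's direct comparison of the two Hessian matrices.

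Where your proposal diverges from the paper, and where it falls short, is the nonvanishing claim in part~(2). The paper's argument is self-contained: Lemma~\ref{convexity} shows that the Hessian of $\mathrm{Im}V^\pm$ with respect to the real parts is negative definite, so the complex Hessian has a definite imaginary part, and a complex matrix with definite real or imaginary part is nonsingular by the cited result of London \cite{L}. You instead invoke the local rigidity theorem of Hodgson--Kerckhoff, asserting that $\det(\mathrm{Hess}\,\widetilde V)\neq 0$ is equivalent to transversality of the gluing system (\ref{HG}), which is in turn equivalent to infinitesimal rigidity of the cone structure. This chain of equivalences is not immediate. Hodgson--Kerckhoff rigidity is a statement about the deformation space of cone structures (an $H^1$ computation for the holonomy representation); transversality of the gluing equations is a statement about the chosen ideal triangulation. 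To bridge the two you need, in addition, that the deformation variety cut out by the edge equation is a smooth curve near $(A_0,B_0)$ and that the cone-angle function restricted to this curve is realized by the Dehn-filling equation of the triangulation. These facts are true and extractable from Neumann--Zagier's analysis, but they are not contained in the rigidity theorem alone, and your proof does not supply them. The ``alternative'' direct evaluation of $\det(\mathrm{Hess}V^+)$ that you sketch is also stated without derivation and without the verification that it cannot vanish, so it does not repair the gap. If you want to pursue the rigidity route, you must explicitly invoke the smoothness of the Neumann--Zagier deformation curve; otherwise the paper's positive/negative-definiteness argument via Lemma~\ref{convexity} is the cleaner and more elementary way to close part~(2).
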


\begin{proof}For (1), we have for $(x,y)\in D_{\mathbb C}$ that
 \begin{equation*}
 \begin{split}
-\mathrm{Li}_2(e^{-2i(y\pm x)})&=\mathrm{Li}_2(e^{2i(y\pm x)})+\frac{\pi^2}{6}+\frac{1}{2}\big(\log(-e^{2i(y\pm x)})\big)^2\\
&=\mathrm{Li}_2(e^{2i(y\pm x)})+\frac{\pi^2}{6}-2y^2-2x^2-\pi^2\mp 4xy+2\pi y\pm 2\pi x,\\
 \end{split}
 \end{equation*}
where the first equality comes from (\ref{Li2}), and the second equality comes from that $0<Re(y)\pm Re(x)<\frac{\pi}{2},$ and hence
$$\log(-e^{2i(y\pm x)})=2i(y\pm x)-\pi i.$$ 
From this, we have \begin{equation}\label{equal}
 \begin{split}
V^+(x,y)&=V^-(-x,y)\\
 &=\Big(-\frac{p}{q}-2\Big)x^2+\frac{\theta x}{q}-2y^2+2\pi y-\frac{5\pi^2}{6}+\mathrm{Li}_2(e^{2i(y+x)})+\mathrm{Li}_2(e^{2i(y-x)}).
 \end{split}
 \end{equation}
 In particular, 
 $$V^+(x_0,y_0)=V^-(-x_0,y_0).$$

Now since $V^\pm(\pm x_0,y_0)$ are the same, it suffices to show that $V^+(x_0,y_0)=i(\mathrm{Vol}(M_{K_\theta})+i\mathrm{CS}(M_{K_\theta}))$ $\mathrm{mod}$ $\pi^2\mathbb Z.$ To this end, we follow the discussion in Section \ref{CCS}. Suppose $M_{K_\theta}$ is obtained from the complement of a hyperbolic knot $K$ in $M$ by attaching a solid torus $T$ with cone angle $\theta$ along the core to the boundary $T$ of $M.$ Let $m$ and $l$ be two generators of $\pi_1(T),$ let $pm+ql$ be the curve that bounds a disk in the attached solid torus and let $\gamma'=-q'm+p'l$ where $(p',q')$ is the unique pair such that $pp'+qq'=1$ and $-q<p'\leqslant 0.$ Then the chosen framing $\gamma=\gamma'+a_0(pm+ql).$  If $\mathrm H(m),$ $\mathrm H(l)$ and $\mathrm H(\gamma)$ are respectively the holonomy of $m,$ $l$ and $\gamma,$ then by Definition \ref{CS},
$$\mathrm{Vol}(M_{K_\theta})+i\mathrm{CS}(M_{K_\theta})=\frac{\Phi(\mathrm{H}(m))}{i}-\frac{\mathrm H(m)\mathrm H(l)}{4i}+\frac{\theta\mathrm H(\gamma)}{4}\quad\mathrm{mod}\ i\pi^2\mathbb Z,$$
where $\Phi$ is the function (see Neumann-Zagier\,\cite{NZ}) defined on the deformation space of hyperbolic structures on $M\setminus K$ parametrized by $\mathrm H(m),$ characterized by 
\begin{equation}\label{char}
\left \{\begin{array}{l}
\frac{\partial \Phi(\mathrm H(m))}{\partial \mathrm H(m)}=\frac{\mathrm H(l)}{2},\\
\\
\Phi(0)=i\Big(\mathrm{Vol}(\mathrm M\setminus K)+i\mathrm{CS}(\mathrm M\setminus K)\Big).
\end{array}\right.
\end{equation} 
We will show that 
\begin{equation}\label{Phi}
\Phi(\mathrm H(m))=4x_0y_0-2\pi x_0-\mathrm{Li}_2(e^{-2(y_0+x_0)})+\mathrm{Li}_2(e^{2i(y_0-x_0)}),
\end{equation}
\begin{equation}\label{uv}
-\frac{\mathrm H(m)\mathrm H(l)}{4}=-\frac{px_0^2}{q}+\frac{\theta x_0}{2q},
\end{equation}
and 
\begin{equation}\label{gamma}
\frac{\theta i}{4}\mathrm H(\gamma)=\frac{\theta x_0 }{2q}-\Big(\frac{p'}{q}+a_0\Big)\frac{\theta^2}{4},
\end{equation}
from which the result follows.
\\

For (\ref{Phi}), we let 
$$U(x,y)=4xy-2\pi x-\mathrm{Li}_2(e^{-2(y+x)})+\mathrm{Li}_2(e^{2i(y-x)}),$$ and define
$$\Psi(u)=U(x,y(x)),$$
where  $u=2x i$  and $y(x)$ is such that 
$$\frac{\partial V^+}{\partial y}\Big|_{(x,y(x))}=0.$$

Since 
$$\frac{\partial U}{\partial y}=\frac{\partial V^+}{\partial y}\quad\text{and}\quad\frac{\partial V^+}{\partial y}\Big|_{(x,y(x))}=0,$$
we have
\begin{equation*}
\begin{split}
\frac{\partial \Psi(u)}{\partial u}=\Big(\frac{\partial U}{\partial x}+\frac{\partial U}{\partial y}\Big|_{(x,y(x))}\frac{\partial y}{\partial x}\Big)\frac{\partial x}{\partial u}=\frac{\partial U}{\partial x}\frac{\partial x}{\partial u}=\frac{\mathrm H(l)}{2},
\end{split}
\end{equation*}
where the last equality comes from (\ref{l}). Also, a direct computation shows $y(0)=\frac{\pi}{6},$ and hence
$$\Psi(0)=U\Big(0,\frac{\pi}{6}\Big)=4i\Lambda\Big(\frac{\pi}{6}\Big)=i\Big(\mathrm{Vol}(\mathrm S^3\setminus K_{4_1})+i\mathrm{CS}(\mathrm S^3\setminus K_{4_1})\Big)=i\Big(\mathrm{Vol}(\mathrm M\setminus K)+i\mathrm{CS}(\mathrm M\setminus K)\Big).$$  Therefore, $\Psi$ satisfies (\ref{char}), and hence $\Psi(u)=\Phi(u).$ 

Since $y(x_0)=y_0,$ and by (\ref{m}) $\mathrm H(m)=2x_0 i,$ we have
$$\Phi(\mathrm H(m))=\Psi(2x_0 i)=U(x_0,y_0),$$
which verifies (\ref{Phi}).

For (\ref{uv}), we have by (\ref{m}) that $\mathrm H(m)=2x_0i$ and 
$$\mathrm H(l)=\frac{\theta i-p\mathrm H(m)}{q}=\frac{\theta i-2px_0i}{q}.$$
Then $$\mathrm H(m)\mathrm H(l)=2x_0i\cdot\frac{\theta i-2px_0i}{q}=-4\Big(-\frac{px_0^2}{q}+\frac{\theta x_0}{2q}\Big)$$
from which (\ref{uv}) follows, and
\begin{equation*}
\begin{split}
\mathrm H(\gamma)=&-q'\mathrm H(m)+p'\mathrm H(l)+a_0\big(p\mathrm H(m)+q\mathrm H(l)\big)\\
=&-q'\cdot 2x_0i+p'\cdot\frac{\theta i-2px_0i}{q}+a_0 \theta i\\
=&\frac{4}{\theta i}\Big(\frac{\theta x_0}{2q}-\Big(\frac{p'}{q}+a_0\Big)\frac{\theta^2}{4}\Big)
\end{split}
\end{equation*}
from which (\ref{gamma}) follows.
\\
 
 For (2), we have by (\ref{equal})
$$\mathrm{Hess}V^+(x_0,y_0)=\begin{bmatrix}
-\frac{2p}{q}-4-\frac{4e^{2i(y_0+x_0)}}{1-e^{2i(y_0+x_0)}}-\frac{4e^{2i(y_0-x_0)}}{1-e^{2i(y_0-x_0)}} & -\frac{4e^{2i(y_0+x_0)}}{1-e^{2i(y_0+x_0)}}+\frac{4e^{2i(y_0-x_0)}}{1-e^{2i(y_0-x_0)}} \\
-\frac{4e^{2i(y_0+x_0)}}{1-e^{2i(y_0+x_0)}}+\frac{4e^{2i(y_0-x_0)}}{1-e^{2i(y_0-x_0)}}  & 4-\frac{4e^{2i(y_0+x_0)}}{1-e^{2i(y_0+x_0)}}-\frac{4e^{2i(y_0-x_0)}}{1-e^{2i(y_0-x_0)}} 
  \end{bmatrix}$$
 and
  $$\mathrm{Hess}V^-(-x_0,y_0)=\begin{bmatrix}
-\frac{2p}{q}-4-\frac{4e^{2i(y_0-x_0)}}{1-e^{2i(y_0-x_0)}}-\frac{4e^{2i(y_0+x_0)}}{1-e^{2i(y_0+x_0)}} & -\frac{4e^{2i(y_0-x_0)}}{1-e^{2i(y_0-x_0)}}+\frac{4e^{2i(y_0+x_0)}}{1-e^{2i(y_0+x_0)}} \\
-\frac{4e^{2i(y_0-x_0)}}{1-e^{2i(y_0-x_0)}}+\frac{4e^{2i(y_0+x_0)}}{1-e^{2i(y_0+x_0)}}  & 4-\frac{4e^{2i(y_0-x_0)}}{1-e^{2i(y_0-x_0)}}-\frac{4e^{2i(y_0+x_0)}}{1-e^{2i(y_0+x_0)}} 
  \end{bmatrix}.$$
  Hence 
$$\det(\mathrm{Hess}V^+)(x_0,y_0)=\det(\mathrm{Hess}V^-)(-x_0,y_0).$$
By Lemma \ref{convexity} below, the real part of the $\mathrm{Hess}V^\pm$ is positive definite. Then by \cite[Lemma]{L}, it is nonsingular.
\end{proof}

\begin{lemma}\label{convexity}
 In $D_{\mathbb C},$  $\mathrm{Im}V^\pm(x,y)$ is strictly concave down in $\mathrm{Re}(x)$ and $\mathrm{Re}(y),$ and is strictly concave up in  $\mathrm{Im}(x)$ and $\mathrm{Im}(y).$
\end{lemma}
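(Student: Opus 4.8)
The plan is to compute the real part of the complex Hessian of $V^\pm$ and show directly that, restricted to the subspace of real-part directions, it is negative definite, and restricted to the subspace of imaginary-part directions, positive definite. Writing $x = s_1 + it_1$ and $y = s_2 + it_2$, a holomorphic function $F(x,y)$ has $\mathrm{Im}\,F$ satisfying $\partial^2 (\mathrm{Im}\,F)/\partial s_a \partial s_b = \mathrm{Im}(\partial^2 F/\partial x_a \partial x_b)$ while $\partial^2 (\mathrm{Im}\,F)/\partial t_a \partial t_b = -\mathrm{Im}(\partial^2 F/\partial x_a \partial x_b)$, so it suffices to understand the single symmetric complex matrix $\mathrm{Hess}_{\mathbb{C}}V^\pm$ computed in the proof of Proposition \ref{Vol}; concavity down in $(\mathrm{Re}(x),\mathrm{Re}(y))$ is equivalent to $\mathrm{Im}(\mathrm{Hess}_{\mathbb{C}}V^\pm)$ being negative definite, and the statement about the imaginary directions is then automatic by the sign flip. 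So the whole lemma reduces to one claim: for $(x,y)\in D_{\mathbb{C}}$, the imaginary part of $\mathrm{Hess}_{\mathbb{C}}V^\pm(x,y)$ is negative definite.

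From the formulas for the Hessian entries already displayed, each entry is a real constant (coming from the $-\frac{px^2}{q}$, $\pm\frac{\theta x}{q}$, $-2\pi x$, $4xy$, $-(\frac{p'}{q}+a_0)\frac{\theta^2}{4}$ terms, all of which contribute real second derivatives) plus $\mathbb{C}$-linear combinations of the two quantities $\zeta_+ := \frac{e^{2i(y+x)}}{1-e^{2i(y+x)}}$ and $\zeta_- := \frac{e^{2i(y-x)}}{1-e^{2i(y-x)}}$. Hence
$$\mathrm{Im}\big(\mathrm{Hess}_{\mathbb C}V^+\big) = -4\begin{bmatrix} \mathrm{Im}\,\zeta_+ + \mathrm{Im}\,\zeta_- & \mathrm{Im}\,\zeta_+ - \mathrm{Im}\,\zeta_- \\ \mathrm{Im}\,\zeta_+ - \mathrm{Im}\,\zeta_- & \mathrm{Im}\,\zeta_+ + \mathrm{Im}\,\zeta_- \end{bmatrix},$$
and a change of basis to the coordinates $y+x$ and $y-x$ diagonalizes this to $\mathrm{diag}(-8\,\mathrm{Im}\,\zeta_+, \,-8\,\mathrm{Im}\,\zeta_-)$. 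So the lemma comes down to the single elementary inequality: if $0 < \mathrm{Re}(w) < \frac{\pi}{2}$ (the constraint satisfied by $w = y+x$ and $w = y-x$ on $D_{\mathbb C}$, and similarly $\pi < \mathrm{Re}(w) < \frac{3\pi}{2}$ on $D'_{\mathbb C}, D''_{\mathbb C}$), then $\mathrm{Im}\big(\frac{e^{2iw}}{1-e^{2iw}}\big) > 0$. I would prove this by writing $\frac{e^{2iw}}{1-e^{2iw}} = \frac{1}{e^{-2iw}-1}$ and computing: with $w = s+it$, $e^{-2iw} = e^{2t}(\cos 2s - i\sin 2s)$, so the imaginary part of $\frac{1}{e^{-2iw}-1}$ has the same sign as $\sin 2s$, which is positive precisely when $s \in (0,\frac{\pi}{2}) \cup (\pi, \frac{3\pi}{2})$ — exactly the ranges occurring in $D_{\mathbb C}$, $D'_{\mathbb C}$, $D''_{\mathbb C}$. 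Thus $\mathrm{Im}\,\zeta_\pm > 0$ throughout, giving negative definiteness of $\mathrm{Im}(\mathrm{Hess}_{\mathbb C}V^\pm)$.

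For the $V^-$ case the Hessian has the same entries with $\zeta_+$ and $\zeta_-$ interchanged, so the argument is identical. The main (and only real) obstacle is bookkeeping: one must be careful that the off-diagonal real constants and the $4xy$ cross term are handled correctly so that the $2\times 2$ matrix genuinely diagonalizes in the $(y+x, y-x)$ basis, and that the branch choices underlying the regions $D'_{\mathbb C}$ and $D''_{\mathbb C}$ (where one of the $\mathrm{Li}_2$ arguments has shifted) still yield the sign condition $\sin 2s > 0$. Since those shifts move $\mathrm{Re}(w)$ into $(\pi, \frac{3\pi}{2})$, where $\sin 2s$ is again positive, the conclusion is uniform across all three regions, and no delicate estimate is needed — the lemma is ultimately a one-line trigonometric sign computation after the linear-algebra reduction.
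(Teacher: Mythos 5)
Your proposal is correct and takes essentially the same approach as the paper: you pass to the imaginary part of the complex Hessian (the paper instead directly differentiates $\mathrm{Im}V^\pm$ in the real coordinates, but $\mathrm{Im}\zeta_\pm = \mathrm{Im}e^{2i(y\pm x)}/|1-e^{2i(y\pm x)}|^2$ shows the matrices are identical), you both factor/diagonalize in the $y\pm x$ directions, you both reduce to the positivity of $\mathrm{Im}e^{2i(y\pm x)}$ on $D_{\mathbb C}$, and you both invoke harmonicity (via the Cauchy--Riemann sign flip) for the $\mathrm{Im}$-direction convexity. The aside about $D'_{\mathbb C}$ and $D''_{\mathbb C}$ is unneeded since the lemma is stated only for $D_{\mathbb C}$, and the factor $-8$ vs $-4$ in the diagonalization depends on whether the change of basis is normalized, but neither issue affects the sign-definiteness conclusion.
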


\begin{proof}
Using (\ref{equal}), taking second derivatives $\mathrm{Im}V^\pm$ with respect to $\mathrm{Re}(x)$ and $\mathrm{Re}(y),$ we
\begin{equation*}
\begin{split}
\mathrm{Hess}(\mathrm{Im}V^\pm)&= \begin{bmatrix}
-\frac{4\mathrm{Im}e^{2i(y+x)}}{|1-e^{2i(y+x)}|^2}-\frac{4\mathrm{Im}e^{2i(y-x)}}{|1-e^{2i(y-x)}|^2} & -\frac{4\mathrm{Im}e^{2i(y+x)}}{|1-e^{2i(y+x)}|^2}+\frac{4\mathrm{Im}e^{2i(y-x)}}{|1-e^{2i(y-x)}|^2}\\
&\\
-\frac{4\mathrm{Im}e^{2i(y+x)}}{|1-e^{2i(y+x)}|^2}+\frac{4\mathrm{Im}e^{2i(y-x)}}{|1-e^{2i(y-x)}|^2} & -\frac{4\mathrm{Im}e^{2i(y+x)}}{|1-e^{2i(y+x)}|^2}-\frac{4\mathrm{Im}e^{2i(y-x)}}{|1-e^{2i(y-x)}|^2}
  \end{bmatrix}\\
&=-\begin{bmatrix}
2&-2\\
2 & 2 
  \end{bmatrix}
  \begin{bmatrix}
\frac{\mathrm{Im}e^{2i(y+x)}}{|1-e^{2i(y+x)}|^2}& 0\\
0 & \frac{\mathrm{Im}e^{2i(y-x)}}{|1-e^{2i(y-x)}|^2}
  \end{bmatrix}
  \begin{bmatrix}
2&2\\
-2 & 2 
  \end{bmatrix}.
  \end{split}
  \end{equation*}
Since in $D_{\mathbb C},$ $\mathrm{Im}e^{2i(y+x)}>0$ and $\mathrm{Im}e^{2i(y-x)}>0,$ the diagonal matrix in the middle is positive definite, and hence $\mathrm{Hess}(\mathrm{Im}V^\pm)$ is negative definite. Therefore, $\mathrm{Im}V$ is concave down in $\mathrm{Re}(x)$ and $\mathrm{Re}(y).$ Since $\mathrm{Im}V^\pm$ is harmonic, it is concave up in $\mathrm{Im}(x)$ and $\mathrm{Im}(y).$   
\end{proof}

The following Lemma will be needed later in the estimate of the Fourier coefficients.

\begin{lemma}\label{n0} $\mathrm{Im}(x_0)\neq 0.$
\end{lemma}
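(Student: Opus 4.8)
The plan is to show that the critical point $(x_0,y_0)$ of $V^+$ cannot have $x_0$ real, by relating the vanishing of $\mathrm{Im}(x_0)$ to a degeneration of the hyperbolic structure. Recall from Corollary \ref{c} that $(x_0,y_0)=\big(\tfrac{\log A_0-\log B_0}{4i},\tfrac{\log A_0+\log B_0}{4i}\big)$, where $(A_0,B_0)$ is the unique solution of the gluing equations \eqref{HG} with $\mathrm{Im}(A_0)>0$ and $\mathrm{Im}(B_0)>0$, giving the hyperbolic cone metric $M_{K_\theta}$. The condition $\mathrm{Im}(x_0)=0$ says $\mathrm{Re}(\log A_0)=\mathrm{Re}(\log B_0)$, i.e. $|A_0|=|B_0|$. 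So the task is to rule out $|A_0|=|B_0|$ for the geometric solution.

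First I would use the edge equation, the first line of \eqref{HG}, which after exponentiating and using $A''=1-\tfrac1A$, $B''=1-\tfrac1B$ becomes a polynomial relation between $A$ and $B$ — in fact the standard figure-eight relation. Imposing $|A_0|=|B_0|$ together with $\mathrm{Im}(A_0),\mathrm{Im}(B_0)>0$ forces $A_0$ and $B_0$ to lie on a common circle centered at the origin in the upper half-plane; I would analyze the edge polynomial under this constraint, aiming to conclude either that $A_0=B_0$ or that $A_0=\bar B_0$ (reflection across the imaginary axis). The case $A_0=B_0$ is the complete hyperbolic structure on the figure-eight complement (where $A_0=B_0=e^{i\pi/3}$), which corresponds to $x_0=0$, hence $\theta=|2x_0|=0$ — but we are assuming a genuine cone angle $\theta>0$ coming from an actual Dehn filling, so this is excluded. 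The case $A_0=\bar B_0$ would give $\mathrm{Re}(x_0)=0$ instead, i.e. $x_0$ purely imaginary; but then $\theta=|2x_0|$ would not be a positive real number unless $x_0=0$ again, contradicting that $\theta\in(0,2\pi)$ is real and positive. Alternatively, and perhaps more cleanly, I would feed $|A_0|=|B_0|$ into the second equation of \eqref{HG} (the Dehn-filling equation) and extract a contradiction with $\theta$ being real: taking real and imaginary parts of $p(\log B'-\log A'')+q(2\pi i-2\log A-4\log A'')=\theta i$ and using $|A_0|=|B_0|$ should over-determine the system.

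An alternative, more conceptual route avoids casework: by Proposition \ref{Vol}(1), $\mathrm{Re}\,V^+(x_0,y_0)\cdot(\text{sign})$ relates to $\mathrm{Vol}(M_{K_\theta})$, and by Lemma \ref{convexity}, $\mathrm{Im}\,V^+$ is strictly concave in the real directions and strictly convex in the imaginary directions at any interior point of $D_{\mathbb C}$. If $x_0$ were real, one could restrict $V^+$ to the real slice $\{(x,y): x\in\mathbb R\}$ near $(x_0,y_0)$ and argue, using the strict convexity/concavity together with the explicit form of $V^+$ in \eqref{equal}, that the critical point on this slice would have to be the degenerate one with $x=0$; equivalently, on the totally real locus the only solution of the critical equations \eqref{C} is $x=0,\,y=\pi/6$, which was computed during the proof of Proposition \ref{Vol}. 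This forces $\theta=0$, the excluded case.

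The main obstacle I anticipate is cleanly ruling out the reflected solution $A_0=\bar B_0$ (equivalently $\mathrm{Re}(x_0)=0$) without invoking more than is available: one must use that $\theta=|2x_0|$ is a prescribed \emph{positive real} cone angle and that $(A_0,B_0)$ is the distinguished geometric solution, not merely \emph{a} solution of \eqref{HG}. Concretely, I would argue that if $x_0$ is purely imaginary and nonzero, then $\mathrm{H}(m)=2x_0 i$ is real and nonzero, so the holonomy of the meridian is real — but for the geometric (discrete faithful, or its deformation) representation near the complete structure the meridian holonomy has nonzero imaginary part as soon as the structure is incomplete with real cone angle, a contradiction; this uses the standard local parametrization of the deformation space by $\mathrm{H}(m)$ near $\mathrm{H}(m)=0$ with $\frac{\partial \mathrm{H}(l)}{\partial \mathrm{H}(m)}\big|_0\ne 0$ (the cusp shape of the figure-eight knot, which is $\tau=2\sqrt{-3}\notin\mathbb R$). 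Thus any direction in which $\mathrm{H}(m)$ stays real would keep the structure complete, forcing $x_0=0$ and $\theta=0$, against the hypothesis.
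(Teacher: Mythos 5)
The paper's proof is short and direct: if $\mathrm{Im}(x_0)=0$, then $\mathrm{H}(m)=2x_0 i$ is purely imaginary; since $\theta$ is real, the Dehn-filling relation $\mathrm{H}(l)=\tfrac{\theta i-p\mathrm{H}(m)}{q}$ forces $\mathrm{H}(l)$ to be purely imaginary as well; therefore the integer linear combination $\mathrm{H}(\gamma)$ is purely imaginary, so the core curve of the filled solid torus has zero complex length---impossible, since the singular locus of a hyperbolic cone manifold is a closed geodesic.

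Your proposal does not close. First, there is a case confusion that runs through the final ``concrete'' paragraph: the hypothesis to rule out is $\mathrm{Im}(x_0)=0$, i.e.\ $x_0$ real, in which case $\mathrm{H}(m)=2x_0 i$ is \emph{purely imaginary}, not real. You instead analyze ``$x_0$ purely imaginary and nonzero'' so that $\mathrm{H}(m)$ is real---this is the case $\mathrm{Re}(x_0)=0$, a different slice from the one the lemma is about, and it is introduced as a subcase ($A_0=\bar B_0$) of a dichotomy that is never established. Indeed, $|A_0|=|B_0|$ alone does not force $A_0=B_0$ or $A_0=\bar B_0$: a generic real nonzero $x_0$ gives $|A_0|=|B_0|$ with $A_0\ne B_0,\bar B_0$, and your argument never touches that generic case. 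Second, the appeal to ``$\theta=|2x_0|$ is real'' conflates two distinct objects both named $x_0$ in the paper: the real coloring parameter from Proposition \ref{formula} (which defines $\theta$) and the complex critical point of $V^+$ from Corollary \ref{c}; the constraint on $\theta$ does not translate into ``$x_0$ critical must be real/imaginary''. Third, the cusp-shape heuristic (``any direction in which $\mathrm{H}(m)$ stays real would keep the structure complete'') is not correct: completeness requires $\mathrm{H}(m)=0$, not merely $\mathrm{H}(m)\in\mathbb R$. The clean route you are looking for is exactly the paper's: feed the reality of $\theta$ and the assumed purely-imaginary $\mathrm{H}(m)$ into the Dehn-filling equation, conclude $\mathrm{H}(l)$ and hence $\mathrm{H}(\gamma)$ are purely imaginary, and contradict the positivity of the core geodesic length.
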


\begin{proof} By (\ref{m}), the holonomy of the meridian $\mathrm{H}(m)=2x_0i.$ We prove by contradiction. Suppose $\mathrm{Im}(x_0)=0,$ then $\mathrm{H}(m)$ is purely imaginary. As a consequence, $\mathrm{H}(l)=\frac{\theta i -p\mathrm{H}(m)}{q}$ is also purely imaginary.  This implies that the holonomy of the core curve of the filled solid torus $H(\gamma)=q'\mathrm H(m)-p'\mathrm H(l)$ is purely imaginary, ie. $\gamma$ has length zero. This is a contradiction.
\end{proof}


\section{Asymptotics}\label{Asy}

\subsection{Asymptotics of the leading Fourier coefficients}\label{leading'}

The main tool we use is Proposition \ref{saddle}, which is a generalization of the standard Saddle Point Approximation\,\cite{O}. A proof of Proposition \ref{saddle} could be found in \cite[Appendix A]{WY2}.

\begin{proposition}\label{saddle}
Let $D_{\mathbf z}$ be a region in $\mathbb C^n$ and let $D_{\mathbf a}$ be a region in $\mathbb R^k.$ Let $f(\mathbf z,\mathbf a)$ and $g(\mathbf z,\mathbf a)$ be complex valued functions on $D_{\mathbf z}\times D_{\mathbf a}$  which are holomorphic in $\mathbf z$ and smooth in $\mathbf a.$ For each positive integer $r,$ let $f_r(\mathbf z,\mathbf a)$ be a complex valued function on $D_{\mathbf z}\times D_{\mathbf a}$ holomorphic in $\mathbf z$ and smooth in $\mathbf a.$
For a fixed $\mathbf a\in D_{\mathbf a},$ let $f^{\mathbf a},$ $g^{\mathbf a}$ and $f_r^{\mathbf a}$ be the holomorphic functions  on $D_{\mathbf z}$ defined by
$f^{\mathbf a}(\mathbf z)=f(\mathbf z,\mathbf a),$ $g^{\mathbf a}(\mathbf z)=g(\mathbf z,\mathbf a)$ and $f_r^{\mathbf a}(\mathbf z)=f_r(\mathbf z,\mathbf a).$ Suppose $\{\mathbf a_r\}$ is a convergent sequence in $D_{\mathbf a}$ with $\lim_r\mathbf a_r=\mathbf a_0,$ $f_r^{\mathbf a_r}$ is of the form
$$ f_r^{\mathbf a_r}(\mathbf z) = f^{\mathbf a_r}(\mathbf z) + \frac{\upsilon_r(\mathbf z,\mathbf a_r)}{r^2},$$
$\{S_r\}$ is a sequence of embedded real $n$-dimensional closed disks in $D_{\mathbf z}$ sharing the same boundary, and $\mathbf c_r$ is a point on $S_r$ such that $\{\mathbf c_r\}$ is convergent  in $D_{\mathbf z}$ with $\lim_r\mathbf c_r=\mathbf c_0.$ If for each $r$
\begin{enumerate}[(1)]
\item $\mathbf c_r$ is a critical point of $f^{\mathbf a_r}$ in $D_{\mathbf z},$
\item $\mathrm{Re}f^{\mathbf a_r}(\mathbf c_r) > \mathrm{Re}f^{\mathbf a_r}(\mathbf z)$ for all $\mathbf z \in S\setminus \{\mathbf c_r\},$
\item the Hessian matrix $\mathrm{Hess}(f^{\mathbf a_r})$ of $f^{\mathbf a_r}$ at $\mathbf c_r$ is non-singular,
\item $|g^{\mathbf a_r}(\mathbf c_r)|$ is bounded from below by a positive constant independent of $r,$
\item $|\upsilon_r(\mathbf z, \mathbf a_r)|$ is bounded from above by a constant independent of $r$ on $D_{\mathbf z},$ and
\item  the Hessian matrix $\mathrm{Hess}(f^{\mathbf a_0})$ of $f^{\mathbf a_0}$ at $\mathbf c_0$ is non-singular,
\end{enumerate}
then
\begin{equation*}
\begin{split}
 \int_{S_r} g^{\mathbf a_r}(\mathbf z) e^{rf_r^{\mathbf a_r}(\mathbf z)} d\mathbf z= \Big(\frac{2\pi}{r}\Big)^{\frac{n}{2}}\frac{g^{\mathbf a_r}(\mathbf c_r)}{\sqrt{-\det\mathrm{Hess}(f^{\mathbf a_r})(\mathbf c_r)}} e^{rf^{\mathbf a_r}(\mathbf c_r)} \Big( 1 + O \Big( \frac{1}{r} \Big) \Big).
 \end{split}
 \end{equation*}
\end{proposition}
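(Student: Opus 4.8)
The plan is to deduce the stated asymptotic from the classical steepest--descent estimate for a single holomorphic phase by a reduction that keeps every constant uniform in $r$, exploiting the convergences $\mathbf a_r\to\mathbf a_0$ and $\mathbf c_r\to\mathbf c_0$ together with the nondegeneracy hypothesis (6) \emph{at the limit}. First I would remove the perturbation: writing $e^{rf_r^{\mathbf a_r}(\mathbf z)}=e^{rf^{\mathbf a_r}(\mathbf z)}\,e^{\upsilon_r(\mathbf z,\mathbf a_r)/r}$, hypothesis (5) makes $e^{\upsilon_r/r}=1+O(1/r)$ uniformly on $D_{\mathbf z}$, so it suffices to treat $\int_{S_r}g^{\mathbf a_r}(\mathbf z)\,e^{rf^{\mathbf a_r}(\mathbf z)}\,d\mathbf z$; the discarded error is $O(1/r)$ times an integral of the same type, which by the end of the argument is of the same order as the main term and is absorbed into the final $1+O(1/r)$.

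\emph{Localization.} Since $f$ is holomorphic in $\mathbf z$ and continuous in $\mathbf a$, Cauchy's integral formula gives that $f^{\mathbf a_r}$ and all its $\mathbf z$--derivatives converge locally uniformly to those of $f^{\mathbf a_0}$. Combining this with hypothesis (2), the compactness of the disks $S_r$ (which share a fixed boundary), and $\mathbf c_r\to\mathbf c_0$, one extracts $\rho>0$ and $\eta>0$, independent of $r$, such that $\mathrm{Re}\,f^{\mathbf a_r}(\mathbf z)\le\mathrm{Re}\,f^{\mathbf a_r}(\mathbf c_r)-\eta$ whenever $\mathbf z\in S_r$ and $|\mathbf z-\mathbf c_r|\ge\rho$, for all large $r$. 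Hence the part of the integral over $S_r\setminus B(\mathbf c_r,\rho)$ is bounded by a constant times $e^{r(\mathrm{Re}\,f^{\mathbf a_r}(\mathbf c_r)-\eta)}$, which is exponentially smaller than the expected main term $r^{-n/2}e^{r\,\mathrm{Re}\,f^{\mathbf a_r}(\mathbf c_r)}$, hence negligible.

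\emph{The Gaussian model.} On a fixed ball $\overline{B(\mathbf c_0,2\rho)}\subset D_{\mathbf z}$ the derivative bounds above show $H_r:=\mathrm{Hess}(f^{\mathbf a_r})(\mathbf c_r)\to\mathrm{Hess}(f^{\mathbf a_0})(\mathbf c_0)$, which is nonsingular by (6), so $|\det H_r|$ stays bounded away from $0$ and the cubic Taylor remainder of $f^{\mathbf a_r}$ at $\mathbf c_r$ is uniformly controlled on $B(\mathbf c_r,\rho)$. Using holomorphy I would deform the portion of $S_r$ inside $B(\mathbf c_r,\rho)$, keeping its bounding sphere fixed, onto the steepest--descent contour of $f^{\mathbf a_r}$ through $\mathbf c_r$; by the previous step this leaves the integral unchanged (Cauchy/Stokes, as $\mathrm{Re}\,f^{\mathbf a_r}<\mathrm{Re}\,f^{\mathbf a_r}(\mathbf c_r)$ on the bounding sphere) apart from the exponentially small tail. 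On that contour, substituting $\mathbf z=\mathbf c_r+\mathbf u/\sqrt r$ turns the quadratic part of $r f^{\mathbf a_r}$ into $\tfrac12\mathbf u^{\mathrm T}H_r\mathbf u$, makes the cubic and higher terms $O(r^{-1/2})$ uniformly on compact $\mathbf u$--sets (hence integrable against the Gaussian, by the uniform nondegeneracy), gives $g^{\mathbf a_r}(\mathbf c_r+\mathbf u/\sqrt r)=g^{\mathbf a_r}(\mathbf c_r)\big(1+O(r^{-1/2})\big)$ with $|g^{\mathbf a_r}(\mathbf c_r)|$ bounded below by (4), and contributes a Jacobian $r^{-n/2}$. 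The residual Gaussian integral evaluates to $(2\pi)^{n/2}$ over $\sqrt{-\det H_r}$ with the branch fixed by the orientation of $S_r$, exactly as in the one--variable case, and collecting the factors yields the asserted formula.

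\emph{Main obstacle.} The real difficulty is not the quadratic computation but the \emph{uniformity in $r$}: the phase $f^{\mathbf a_r}$, the critical point $\mathbf c_r$, and the contour $S_r$ all vary, so $\rho$, $\eta$, the lower bound for $|\det H_r|$, and the cubic--remainder bound must all be chosen independently of $r$. This is exactly why hypothesis (6) is imposed on top of the per-$r$ hypothesis (3): continuity of $\mathbf a\mapsto\mathrm{Hess}(f^{\mathbf a})$ together with $\mathbf c_r\to\mathbf c_0$ promotes pointwise nondegeneracy at the limit to a uniform lower bound for large $r$. I would organize this by first proving all the estimates on a small compact neighborhood of $(\mathbf c_0,\mathbf a_0)$ for the limiting data and then transporting them to the moving data via the convergences, with a compactness/contradiction argument handling the fact that hypothesis (2) is a priori only given for each fixed $r$.
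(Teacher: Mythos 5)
The paper does not prove Proposition \ref{saddle} here at all: it cites the companion paper \cite[Appendix A]{WY2} for the argument. So there is no in-text proof to compare against; I can only assess your sketch on its own terms.

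Your outline is the standard one --- strip off the $O(1/r)$ perturbation using (5), localize near the critical point using (2), then run the $n$-dimensional Gaussian/steepest-descent computation using (1), (3), (4) and (6) --- and the ingredients you invoke are the right ones. Two points are worth flagging. First, the step ``deform the portion of $S_r$ inside $B(\mathbf c_r,\rho)$ onto the steepest-descent contour'' is harder than in one variable; the usual clean route is a holomorphic Morse lemma, i.e.\ a biholomorphic change of coordinates near $\mathbf c_r$ bringing $f^{\mathbf a_r}$ to the normal form $f^{\mathbf a_r}(\mathbf c_r)+\tfrac12\mathbf w^{\mathrm T}H_r\mathbf w$, after which the deformation and the Gaussian computation are immediate and the branch of $\sqrt{-\det H_r}$ is determined by the induced orientation. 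Second, and more substantively, the uniform-in-$r$ margin $\eta>0$ you extract in the localization step does not follow from the hypotheses as stated by ``compactness of the disks $S_r$'': the $S_r$ are only required to share a boundary and sit in $D_{\mathbf z}$, so their interiors may a priori wander, and hypothesis (2) gives only a strict pointwise inequality for each fixed $r$ with no quantitative content. A correct version of this step needs either an additional implicit assumption (e.g.\ that the $S_r$ lie in a fixed compact subset of $D_{\mathbf z}$ and converge, which is what happens in the intended applications, including Section \ref{leading'} here where $S^{\pm}$ is actually independent of $r$), or a finer argument that derives the uniform margin from (6) together with the locally uniform convergence $f^{\mathbf a_r}\to f^{\mathbf a_0}$ on a neighborhood of $\mathbf c_0$ and then handles the rest of $S_r$ separately. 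As written, your compactness/contradiction remark gestures at this but leaves the genuine difficulty --- controlling the maximum of $\mathrm{Re}\,f^{\mathbf a_r}$ on the $r$-dependent part of $S_r$ away from $\mathbf c_r$ --- unresolved.
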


Let $(x_0, y_0)$ be the unique critical point of $V^+$ in $D_{\mathbb C},$ and by Corollary \ref{c} $(-x_0, y_0)$ is the unique critical point of $V^-$ in $D_{\mathbb C}.$ Let $\delta$ be as in Proposition \ref{bound}, and as drawn in Figure \ref{surface} let $S^+= S^+_{\text{top}}\cup S^+_{\text{side}}\cup(D_{\frac{\delta}{2}}\setminus D_{\delta})$ be the union of $D_{\frac{\delta}{2}}\setminus D_{\delta}$ with the two surfaces 
$$S^+_{\text{top}}=\Big\{(x,y)\in D_{\mathbb C,\delta}\ |\ (\mathrm{Im}(x),\mathrm{Im}(y))=(\mathrm{Im}(x_0),\mathrm{Im}(y_0))\Big\}$$ and 
$$S^+_{\text{side}}=\Big\{ (\theta_1+it\mathrm{Im}(x_0), \theta_2+it \mathrm{Im}(y_0)) \ |\ (\theta_1,\theta_2)\in \partial D_{\delta}, t\in[0,1]\Big\};$$
and let  $S^-= S^-_{\text{top}}\cup S^-_{\text{side}}\cup(D_{\frac{\delta}{2}}\setminus D_{\delta})$ be the union of $D_{\frac{\delta}{2}}\setminus D_{\delta}$ with the two surfaces 
$$S^-_{\text{top}}=\Big\{(x,y)\in D_{\mathbb C,\delta}\ |\ (\mathrm{Im}(x),\mathrm{Im}(y))=(-\mathrm{Im}(x_0),\mathrm{Im}(y_0))\Big\}$$ and 
$$S^-_{\text{side}}=\Big\{ (\theta_1-it\mathrm{Im}(x_0), \theta_2+it\mathrm{Im}(y_0)) \ |\ (\theta_1,\theta_2)\in \partial D_{\delta}, t\in[0,1]\Big\}.$$

\begin{figure}[htbp]
\centering
\includegraphics[scale=0.3]{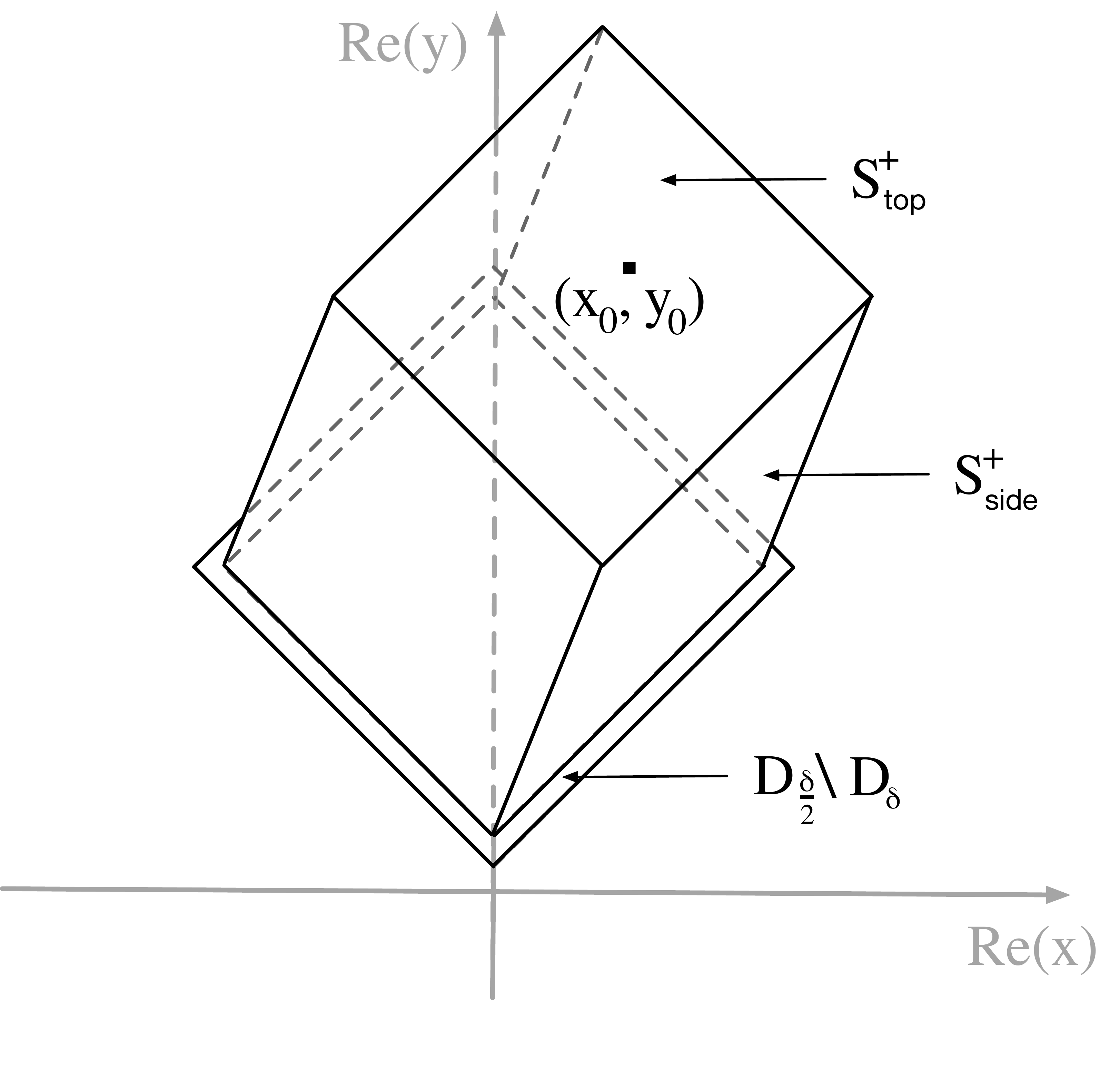}
\caption{The deformed surface $S^+$}
\label{surface} 
\end{figure}

\begin{proposition}\label{max}
On $S^+,$ $\mathrm{Im}V^+$ achieves the only absolute maximum at $(x_0,y_0);$ and on $S^-,$ $\mathrm{Im}V^-$ achieves the only absolute maximum at $(-x_0,y_0).$
\end{proposition}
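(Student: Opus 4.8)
The plan is to bound $\mathrm{Im}V^+$ separately on the three pieces $S^+_{\text{top}}$, $S^+_{\text{side}}$ and $D_{\frac{\delta}{2}}\setminus D_{\delta}$ that make up $S^+$, and then deduce the statement for $S^-$ from that for $S^+$ via the identity $V^-(x,y)=V^+(-x,y)$ from (\ref{equal}) together with the fact that $D$, $D_{\frac{\delta}{2}}$, $D_{\delta}$ and the critical points are all interchanged by $x\mapsto -x$. Throughout I will use that $\mathrm{Im}V^+(x_0,y_0)=\mathrm{Vol}(M_{K_\theta})$, which is part of Proposition \ref{Vol}(1) (the ambiguity mod $\pi^2\mathbb Z$ is real, affecting only the real part $-\mathrm{CS}$), and that $\delta$ is small enough — being the $\delta$ of Proposition \ref{bound}, with $\epsilon$ there chosen so that $\frac12\mathrm{Vol}(\mathrm S^3\setminus K_{4_1})+\epsilon<\mathrm{Vol}(M_{K_\theta})$, possible by the hypothesis of Theorem \ref{main} — that in particular $(\mathrm{Re}(x_0),\mathrm{Re}(y_0))\in D_{\delta}$, which holds since $(x_0,y_0)\in D_{\mathbb C}$.

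First I would handle $S^+_{\text{top}}$: there the imaginary parts of $x,y$ are frozen at $(\mathrm{Im}(x_0),\mathrm{Im}(y_0))$, so $\mathrm{Im}V^+$ restricted to it is a function of $(\mathrm{Re}(x),\mathrm{Re}(y))\in D_{\delta}$, strictly concave by Lemma \ref{convexity}. Since the holomorphic differential of $V^+$ vanishes at $(x_0,y_0)$, the Cauchy--Riemann equations give $\partial_{\mathrm{Re}(x)}\mathrm{Im}V^+=\partial_{\mathrm{Re}(y)}\mathrm{Im}V^+=0$ there, so $(x_0,y_0)$ is a critical point of this restriction, and strict concavity makes it a strict absolute maximum on $\overline{S^+_{\text{top}}}$ (the maximum being interior). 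Next, for $S^+_{\text{side}}$, fix $(\theta_1,\theta_2)\in\partial D_\delta$ and follow the segment $t\mapsto(\theta_1+it\,\mathrm{Im}(x_0),\theta_2+it\,\mathrm{Im}(y_0))$, $t\in[0,1]$, which moves only in the imaginary directions and stays in $\overline{D_{\mathbb C,\delta}}\subset D_{\mathbb C}$. By Lemma \ref{convexity}, $\mathrm{Im}V^+$ is strictly convex in $(\mathrm{Im}(x),\mathrm{Im}(y))$, hence strictly convex in $t$ along this segment, so its maximum over $t\in[0,1]$ is at an endpoint. At $t=1$ the point lies on $\partial S^+_{\text{top}}$, so $\mathrm{Im}V^+<\mathrm{Im}V^+(x_0,y_0)$ there by the previous paragraph; the case $t=0$ is the real point $(\theta_1,\theta_2)\in\partial D_\delta$, handled next.

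The remaining — and crux — step is to bound $\mathrm{Im}V^+$ on the real set $\overline{D_{\frac{\delta}{2}}\setminus D_{\delta}}$, which contains both the flat piece of $S^+$ and the $t=0$ boundary slice $\partial D_\delta$. For real $(x,y)\in D$ the non-dilogarithm part of $V^+$ is real, and (\ref{dilogLob}) together with the oddness and $\pi$-periodicity of $\Lambda$ gives $\mathrm{Im}V^+(x,y)=2\Lambda(y+x)+2\Lambda(y-x)$ — equivalently, this is the growth rate governing $\frac{4\pi}{r}\log|g^+_r|$ already estimated in the proof of Proposition \ref{bound}. Every point of $D_{\frac{\delta}{2}}\setminus D_{\delta}$ has at least one of $y+x,\,y-x$ in $[\frac{\delta}{2},\delta]\cup[\frac{\pi}{2}-\delta,\frac{\pi}{2}-\frac{\delta}{2}]$ while the other lies in $[\frac{\delta}{2},\frac{\pi}{2}-\frac{\delta}{2}]\subset(0,\pi)$; since $\Lambda$ attains its maximum on $(0,\pi)$ at $\frac{\pi}{6}$, is increasing on $(0,\frac{\pi}{6})$, and $\Lambda(\frac{\delta}{2}),\Lambda(\frac{\pi}{2}-\delta)\to 0$ as $\delta\to 0$ (recall $\Lambda(\frac{\pi}{2})=0$),
$$\sup_{\overline{D_{\frac{\delta}{2}}\setminus D_{\delta}}}\mathrm{Im}V^+\ \leqslant\ 2\Lambda\Big(\frac{\pi}{6}\Big)+o(1)\ =\ \frac{1}{2}\mathrm{Vol}(\mathrm S^3\setminus K_{4_1})+o(1)\qquad(\delta\to 0),$$
using $2\Lambda(\frac{\pi}{6})=3\Lambda(\frac{\pi}{3})=\frac{1}{2}\cdot 6\Lambda(\frac{\pi}{3})$. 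As the hypothesis of Theorem \ref{main} is exactly $\mathrm{Vol}(M_{K_\theta})>\frac{1}{2}\mathrm{Vol}(\mathrm S^3\setminus K_{4_1})$ and $\mathrm{Im}V^+(x_0,y_0)=\mathrm{Vol}(M_{K_\theta})$, for the fixed sufficiently small $\delta$ of Proposition \ref{bound} the displayed supremum is $<\mathrm{Im}V^+(x_0,y_0)$.

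Combining the three pieces gives $\mathrm{Im}V^+\leqslant\mathrm{Im}V^+(x_0,y_0)$ on $S^+$ with equality only at $(x_0,y_0)$, and the $S^-$ statement follows by the reflection $x\mapsto -x$. The main obstacle is the last step: it is the only place the volume hypothesis of Theorem \ref{main} is used, and that hypothesis exists precisely so that the critical value $\mathrm{Vol}(M_{K_\theta})$ strictly dominates the ``collar'' value $\frac{1}{2}\mathrm{Vol}(\mathrm S^3\setminus K_{4_1})$ produced by $D_{\frac{\delta}{2}}\setminus D_{\delta}$; by contrast, the convexity arguments on $S^+_{\text{top}}$ and $S^+_{\text{side}}$ are routine once Lemma \ref{convexity} is available.
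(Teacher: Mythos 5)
Your proof is correct and follows essentially the same three-piece decomposition as the paper: concavity from Lemma \ref{convexity} on $S^+_{\text{top}}$, convexity in $t$ on $S^+_{\text{side}}$ reducing to the endpoints, and a $\frac12\mathrm{Vol}(S^3\setminus K_{4_1})$-type bound on $D_{\delta/2}\setminus D_\delta$. The only cosmetic differences are that you spell out the identity $\mathrm{Im}V^+(x,y)=2\Lambda(y+x)+2\Lambda(y-x)$ on the real collar and bound it directly via properties of the Lobachevsky function rather than citing Proposition \ref{bound} as a black box, and that you deduce the $S^-$ case by the reflection $x\mapsto -x$ (using $V^-(x,y)=V^+(-x,y)$) rather than running the two cases in parallel as the paper does; both choices are sound and change nothing of substance.
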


\begin{proof} By Lemma \ref{convexity}, $\mathrm{Im}V^{\pm}$  is concave down on $S^{\pm}_{\text{top}}.$ Since $(\pm x_0, y_0)$ are respectively the critical points of $\mathrm{Im}V^{\pm},$ they are respectively  the only absolute maximum on $S^{\pm}_{\text{top}}.$

On the side $S^{\pm}_{\text{side}},$ for each $(\theta_1,\theta_2)\in \partial D_{\delta}$ respectively consider the functions $$g^{\pm}_{(\theta_1,\theta_2)}(t)\doteq \mathrm{Im}V^{\pm}(\theta_1\pm it\mathrm{Im}(x_0), \theta_2+it\mathrm{Im}(y_0))$$
on $[0,1].$
We show that $g^{\pm}_{(\theta_1,\theta_2)}(t)<\mathrm{Im}V^{\pm}(\pm x_0,y_0)$ for each  $(\theta_1,\theta_2)\in \partial D_{\delta}$ and $t\in[0,1].$

Indeed, since $(\theta_1,\theta_2)\in\partial D_{\delta},$ $g^{\pm}_{(\theta_1,\theta_2)}(0)=\mathrm{Im}V^{\pm}(\theta_1,\theta_2)<\frac{1}{2}\mathrm{Vol}(\mathrm S^3\setminus K_{4_1})+\epsilon<Vol(M_{K_\theta})=\mathrm{Im}V^{\pm}(\pm x_0,y_0);$ and since $(\theta_1\pm i\mathrm{Im}(x_0),\theta_2+i\mathrm{Im}(y_0))\in S^{\pm}_{\text{top}},$ by the previous step $g^{\pm}_{(\theta_1,\theta_2)}(1) = \mathrm{Im}V^{\pm}(\theta_1\pm i\mathrm{Im}(x_0),\theta_2+i\mathrm{Im}(y_0))<\mathrm{Im}V^{\pm}(\pm x_0,y_0).$
Now by Lemma \ref{convexity}, $g^{\pm}_{(\theta_1,\theta_2)}$ is concave up, and hence
$$g^{\pm}_{(\theta_1,\theta_2)}(t)\leqslant \max\Big\{g^{\pm}_{(\theta_1,\theta_2)}(0), g^{\pm}_{(\theta_1,\theta_2)}(1)\Big\}<\mathrm{Im}V^{\pm}(\pm x_0,y_0).$$ 

By Proposition \ref{bound} and assumption of $\theta,$ on $D_{\frac{\delta}{2}}\setminus D_{\delta},$ $\mathrm{Im}V(x,y)\leqslant \frac{1}{2}\mathrm{Vol}(\mathrm S^3\setminus K_{4_1})+\epsilon<\mathrm{Vol(M_{K_\theta})}=\mathrm{Im}V^\pm(\pm x_0,y_0).$
\end{proof}

\begin{proposition}\label{leading}
 \begin{equation*}
\begin{split}
\int_{D_{\frac{\delta}{2}}}\epsilon(x,y)e^{-xi+\frac{r}{4\pi i}V_r^\pm(x,y)}dxdy=\frac{4\pi}{r}\frac{e^{\frac{r}{4\pi}\Big(\mathrm{Vol}(M_{K_\theta})+i\mathrm{CS}(M_{K_\theta})\Big)}}{\sqrt{-\mathrm{Hess}V^+(x_0,y_0)}}\Big(1+O\Big(\frac{1}{r}\Big)\Big).
\end{split}
\end{equation*}
\end{proposition}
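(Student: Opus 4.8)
The plan is to deduce this from the generalized saddle point approximation, Proposition~\ref{saddle}, applied to the integral after deforming the real domain $D_{\frac{\delta}{2}}$ into the cycle $S^{\pm}$ described just above the statement. Since $D_{\frac{\delta}{2}}\subset D$, we have $\epsilon(x,y)\equiv 2$ on it, so the integral equals $2\int_{D_{\frac{\delta}{2}}}e^{-xi+\frac{r}{4\pi i}V^{\pm}_r(x,y)}\,dx\,dy$. First I would move this real $2$-cycle to $S^{\pm}=S^{\pm}_{\mathrm{top}}\cup S^{\pm}_{\mathrm{side}}\cup(D_{\frac{\delta}{2}}\setminus D_{\delta})$, which shares the boundary $\partial D_{\frac{\delta}{2}}$; this is a legitimate contour deformation because the integrand is a closed holomorphic $2$-form on the part of $\mathbb{C}^2$ swept by the homotopy, which avoids the poles of $\varphi_r$ (along $S^{\pm}_{\mathrm{side}}$ only the imaginary parts of the coordinates move, and $S^{\pm}_{\mathrm{top}}\subset D_{\mathbb{C},\delta}$, where $V^{\pm}_r$ is holomorphic).

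Next I would rewrite the integrand in the shape required by Proposition~\ref{saddle}. By Lemma~\ref{converge}, uniformly on compact subsets of $D_{\mathbb{C}}$,
$$V^{\pm}_r(x,y)=V^{\pm}(x,y)-\frac{2\pi i}{r}\Big(\log\big(1-e^{-2i(y+x)}\big)+\log\big(1-e^{2i(y-x)}\big)\Big)+O\Big(\frac{1}{r^2}\Big),$$
where $V^{\pm}$ are the holomorphic functions of Section~\ref{geometry}. Hence
$$\epsilon(x,y)\,e^{-xi+\frac{r}{4\pi i}V^{\pm}_r(x,y)}=g(x,y)\,e^{rf_r(x,y)},\qquad g(x,y)=\frac{2e^{-xi}}{\sqrt{\big(1-e^{-2i(y+x)}\big)\big(1-e^{2i(y-x)}\big)}},$$
where $f=\frac{1}{4\pi i}V^{\pm}$, $f_r=f+\frac{\upsilon_r}{r^2}$ with $\upsilon_r$ holomorphic and bounded (absorbing the $O(1/r^2)$ tail), and $g$ is holomorphic and non-vanishing on $D_{\mathbb{C}}$ after choosing a branch of the square root. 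The one non-obvious point is the value of $g$ at the critical point: the first equation of~(\ref{C}) forces $\log(1-e^{-2i(y_0+x_0)})+\log(1-e^{2i(y_0-x_0)})=-2ix_0$, whence $(1-e^{-2i(y_0+x_0)})(1-e^{2i(y_0-x_0)})=e^{-2ix_0}$ and $g(x_0,y_0)=2$; thus the $\pm\frac{\pi}{r}$-shift contributions and the prefactor $e^{-xi}$ conspire to cancel there.

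Then I would verify the six hypotheses of Proposition~\ref{saddle} for the surfaces $S^{\pm}$, the limiting data being governed by the cone-angle parameter (the constant $x_0$, hence the critical point $(x_0,y_0)$ of Corollary~\ref{c}, varies with $r$ and converges). Hypothesis~(1) is Corollary~\ref{c}; (2) is Proposition~\ref{max} together with the identity $\mathrm{Re}f=\frac{1}{4\pi}\mathrm{Im}V^{\pm}$; (3) and (6) follow from Proposition~\ref{Vol}(2) since $\mathrm{Hess}\,f=\frac{1}{4\pi i}\mathrm{Hess}\,V^{\pm}$, the argument of Lemma~\ref{convexity} giving non-degeneracy stably in the cone angle; (4) is $|g(x_0,y_0)|=2$; and (5) is the boundedness of $\upsilon_r$. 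Proposition~\ref{saddle} with $n=2$ then yields
$$\int_{S^{\pm}}g\,e^{rf_r}=\frac{2\pi}{r}\cdot\frac{g(x_0,y_0)}{\sqrt{-\det\mathrm{Hess}(f)(x_0,y_0)}}\,e^{rf(x_0,y_0)}\Big(1+O\big(\tfrac{1}{r}\big)\Big),$$
and substituting $e^{rf(x_0,y_0)}=e^{\frac{r}{4\pi}(\mathrm{Vol}(M_{K_\theta})+i\,\mathrm{CS}(M_{K_\theta}))}$ from Proposition~\ref{Vol}(1), together with $\det\mathrm{Hess}(f)=-\frac{1}{16\pi^2}\det\mathrm{Hess}\,V^{\pm}$, and collecting the numerical constants gives the stated formula.

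The main obstacle is not any single estimate but the careful bookkeeping of the several $O(1/r)$ contributions — the two $\pm\frac{\pi}{r}$ shifts inside $\varphi_r$, the prefactor $e^{-xi}$, and the $O(1/r^2)$ tail of Lemma~\ref{converge} — so that they repackage exactly into a holomorphic $g$ with the asserted non-zero value at the critical point, into a holomorphic $f_r=f+\upsilon_r/r^2$ of the precise form demanded by Proposition~\ref{saddle}, and into the correct numerical prefactor; by contrast the substantive content — that on $S^{\pm}$ the function $\mathrm{Im}V^{\pm}$ attains its maximum only at the critical point, whose critical value is $\mathrm{Vol}(M_{K_\theta})+i\,\mathrm{CS}(M_{K_\theta})$ and whose Hessian is non-degenerate — is already in hand from Corollary~\ref{c} and Propositions~\ref{Vol} and~\ref{max}.
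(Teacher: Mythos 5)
Your proof is correct and follows essentially the same route as the paper's: deform $D_{\delta/2}$ to $S^\pm$, use Lemma \ref{converge} to write $V^\pm_r = V^\pm + O(1/r) + O(1/r^2)$, absorb the $O(1/r)$ term into a holomorphic prefactor $g$, and invoke Proposition \ref{saddle} via Corollary \ref{c}, Proposition \ref{max}, and Proposition \ref{Vol}. The only cosmetic difference is that you absorb $\epsilon=2$ into $g$ so that $g(\pm x_0,y_0)=2$, whereas the paper keeps the factor of $2$ outside and gets $g(\pm x_0,y_0)=1$ from the first critical point equation in (\ref{C}); both bookkeepings are equivalent.
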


\begin{proof} By analyticity, the integrals remain the same if we deform the domains from $D_{\frac{\delta}{2}}$ to $S^\pm.$ Then by Corollary \ref{c}, $(\pm x_0,y_0)$ are respectively the critical points of $V^\pm.$ By Proposition \ref{max}, $\mathrm{Im}V^\pm$ achieves the only absolute maximum on $S^\pm$ at $(\pm x_0,y_0).$  By Proposition \ref{Vol}, (2), $\mathrm{Hess}V^\pm(\pm x_0,y)\neq 0.$ Finally, to estimate the difference between $V^\pm_r$ and $V^\pm,$ we have 
$$\varphi_r\Big(\pi-x-y-\frac{\pi}{r}\Big)=\varphi_r(\pi-x-y)-\varphi'_r(\pi-x-y)\cdot\frac{\pi}{r}+O\Big(\frac{1}{r^2}\Big)$$
and
$$\varphi_r\Big(y-x+\frac{\pi}{r}\Big)=\varphi_r(y-x)+\varphi'_r(y-x)\cdot\frac{\pi}{r}+O\Big(\frac{1}{r^2}\Big).$$
Then by Lemma \ref{converge}, in $\big\{(x,y)\in \overline{D_{\mathbb C,\delta}}\ \big|\ |\mathrm{Im}x| < L, |\mathrm{Im}x| < L\}$ for some sufficiently large $L,$ 
$$V^\pm_r(x,y)=V^\pm(x,y)-\frac{2\pi i\big(\log\big(1-e^{-2i(y+x)}\big)+\log\big(1-e^{2i(y-x)}\big)\big)}{r}+\frac{\upsilon_r(x,y)}{r^2}$$
with $|\upsilon_r(x,y)|$ bounded from above by a constant independent of $r,$ and
$$e^{-xi+\frac{r}{4\pi i}V_r^\pm(x,y)}=e^{-xi-\frac{\log\big(1-e^{-2i(y+x)}\big)}{2}-\frac{\log\big(1-e^{2i(y-x)}\big)}{2}+\frac{r}{4\pi i}\Big(V^\pm(x,y)+\frac{\upsilon_r(x,y)}{r^2}\Big)}.$$

Now let $D_{\mathbf z}=\big\{(x,y)\in \overline{D_{\mathbb C,\delta}}\ \big|\ |\mathrm{Im}x| < L, |\mathrm{Im}x| < L\}$ for some sufficiently large $L,$  $\mathbf a_r= \theta=\Big|2\pi-\frac{4\pi}{r}-\frac{4\pi m_0}{r}\Big|,$ $f_r^{\mathbf a_r}(x,y)=V_r^\pm(x,y),$ $g_r^{\mathbf a_r}(x,y)=e^{-xi-\frac{\log\big(1-e^{-2i(y+x)}\big)}{2}-\frac{\log\big(1-e^{2i(y-x)}\big)}{2}},$ $f^{\mathbf a_r}(x,y)=V^\pm(x,y).$ Then all the conditions of Proposition \ref{saddle} are satisfied.  By the first equation of (\ref{C}), at the critical points $(\pm x_0,y_0),$
$$-xi-\frac{\log\big(1-e^{-2i(y+x)}\big)}{2}-\frac{\log\big(1-e^{2i(y-x)}\big)}{2}=0,$$
and hence $g_r^{\mathbf a_r}(\pm x_0,y_0)=1.$
By Proposition \ref{Vol}, (1), the critical values $$V^\pm(\pm x_0,y_0)=i\big(\mathrm{Vol}(M_{K_\theta})+i\mathrm{CS}(M_{K_\theta})\big)$$ and the result follows.
\end{proof}


\subsection{Estimates of other Fourier coefficients}\label{other}

Let 
$$V^{(k_0,k_1,k_2),\pm}(x,y)=V^\pm(x,y)-\frac{4k_0\pi x}{q}-4k_1\pi x-4k_2 \pi y, $$ and 
$$\hat F^\pm(k_0, k_1, k_2)=\int_{\mathcal D_{\frac{\delta}{2}}}e^{\frac{r}{4\pi i}V^{(k_0,k_1,k_2),\pm}(x,y)}dxdy.$$
By Lemma \ref{converge}, the asymptotics of $\hat F^\pm_r(k_0,k_1,k_2)$ is approximated by that of $\hat F^\pm(k_0,k_1,k_2).$ 
We will then estimate the contribution to $\hat F^\pm(k_0,k_1,k_2)$ of each individual square $D_{\frac{\delta}{2}},$ $D'_{\frac{\delta}{2}}$ and $D_{\frac{\delta}{2}}''.$

\subsubsection{Estimate on $D_{\frac{\delta}{2}}$}

Let
$$\hat F^\pm_D(k_0, k_1, k_2)=\int_{D_{\frac{\delta}{2}}}e^{\frac{r}{4\pi i}V^{(k_0,k_1,k_2),\pm}(x,y)}dxdy.$$

\begin{lemma}\label{k2} For $k_2\neq 0,$ 
$$\hat F^\pm_D(k_0, k_1,k_2)=O\Big(e^{\frac{r}{4\pi}\big(\mathrm{Vol}(M_{K_\theta})-\epsilon\big)}\Big).$$
\end{lemma}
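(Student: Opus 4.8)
The plan is to show that when $k_2 \neq 0$, the imaginary part of the exponent $\mathrm{Im}\,V^{(k_0,k_1,k_2),\pm}(x,y)$ is bounded above on $D_{\frac{\delta}{2}}$ by $\mathrm{Vol}(M_{K_\theta}) - \epsilon$, and then the claimed bound follows by the triangle inequality, estimating the integral by the area of $D_{\frac{\delta}{2}}$ times the maximum of $|e^{\frac{r}{4\pi i}V^{(k_0,k_1,k_2),\pm}}| = e^{\frac{r}{4\pi}\mathrm{Im}\,V^{(k_0,k_1,k_2),\pm}}$. The key point is that the extra term $-4k_2\pi y$ in $V^{(k_0,k_1,k_2),\pm}$ is real for real $y$ (the integration is over the real region $D_{\frac{\delta}{2}}$), so it does not change the imaginary part at all on the real slice; rather, the strategy is to deform the contour in the $y$-variable into the complex direction to gain exponential decay from this linear term. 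I would push $y$ off the real axis by $y \mapsto y + i s$ for an appropriate sign of $s \neq 0$ chosen according to the sign of $k_2$, so that $\mathrm{Im}(-4k_2\pi y) = -4k_2\pi s$ contributes a definite negative amount; one must check, using the explicit form of $V^\pm$ (and the dilogarithm terms $\mathrm{Li}_2(e^{-2i(y+x)})$, $\mathrm{Li}_2(e^{2i(y-x)})$), that moving the contour stays inside the holomorphic domain $D_{\mathbb C,\frac{\delta}{2}}$ and that the rest of $\mathrm{Im}\,V^\pm$ does not increase too much along the deformation.

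First I would recall from Lemma \ref{convexity} that on $D_{\mathbb C}$, $\mathrm{Im}\,V^\pm$ is strictly concave up in $\mathrm{Im}(x)$ and $\mathrm{Im}(y)$ and strictly concave down in $\mathrm{Re}(x),\mathrm{Re}(y)$; combined with the fact (Proposition \ref{Vol}) that the only critical point in $D_{\mathbb C}$ is $(\pm x_0,y_0)$ where $\mathrm{Im}\,V^\pm = \mathrm{Vol}(M_{K_\theta})$, this controls the unperturbed part. Then I would analyze the function $h(s) = \mathrm{Im}\,V^{(k_0,k_1,k_2),\pm}(x, y+is)$ for fixed real $(x,y) \in D_{\frac{\delta}{2}}$: by harmonicity/concavity its behavior in $s$ is governed by a downward-opening contribution from $\mathrm{Im}\,V^\pm$ plus the exactly linear term $-4k_2\pi s$, so choosing $s = \mathrm{sgn}(-k_2)\cdot s_0$ for a small fixed $s_0 > 0$ independent of $r$ and of $(x,y)$ forces $h(s) \leq \mathrm{Im}\,V^\pm(x,y) - c|k_2| s_0 + (\text{small correction})$. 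Since on $\overline{D_{\frac{\delta}{2}}} \setminus D_\delta$ we already have $\mathrm{Im}\,V^\pm < \frac{1}{2}\mathrm{Vol}(S^3\setminus K_{4_1}) + \epsilon < \mathrm{Vol}(M_{K_\theta})$ by Proposition \ref{bound} and the hypothesis on $\theta$, and on $D_\delta$ we have $\mathrm{Im}\,V^\pm \leq \mathrm{Vol}(M_{K_\theta})$, the gain $-c|k_2|s_0$ brings the whole thing strictly below $\mathrm{Vol}(M_{K_\theta})$; after shrinking, the uniform bound $\mathrm{Vol}(M_{K_\theta}) - \epsilon$ holds (possibly after renaming $\epsilon$).

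The main obstacle I anticipate is controlling the contour deformation uniformly: one must verify that the shifted contour $\{(x, y + is) : (x,y) \in \overline{D_{\frac{\delta}{2}}}\}$ (together with the connecting side pieces needed to justify replacing the real integral by the shifted one via Cauchy's theorem) remains inside the domain of holomorphicity $D_{\mathbb C}$ where $\mathrm{Li}_2(e^{-2i(y+x)})$ and $\mathrm{Li}_2(e^{2i(y-x)})$ are well-defined and the convexity lemma applies — the imaginary shift must be small enough that $\mathrm{Im}(y\pm x \mp \text{shift})$ stays in the right range so that $e^{-2i(y+x)}$ and $e^{2i(y-x)}$ stay away from $[1,\infty)$. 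A secondary technical point is that $V^{(k_0,k_1,k_2),\pm}_r$ versus $V^{(k_0,k_1,k_2),\pm}$ differ by $O(1/r)$ uniformly on such a bounded-imaginary-part region (by Lemma \ref{converge}), which only perturbs the exponent by a bounded amount and is harmless; one should state this explicitly so the estimate transfers from $\hat F^\pm(k_0,k_1,k_2)$ to $\hat F^\pm_r(k_0,k_1,k_2)$. Once the deformation is justified, the bound is immediate.
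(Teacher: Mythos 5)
Your overall instinct — deform the $y$-contour into the complex direction to convert the real linear term $-4k_2\pi y$ into exponential decay, and then control things on the sides via Proposition \ref{bound} — is the right one and is the same instinct as the paper's proof. But two specific steps in your proposal are wrong, and they interact to leave a genuine gap.

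First, you misquote the sign in Lemma \ref{convexity}: $\mathrm{Im}\,V^\pm$ is strictly concave \emph{up} in $\mathrm{Im}(y)$ (it is harmonic and concave down in the real directions). So the ``downward-opening contribution from $\mathrm{Im}\,V^\pm$'' you invoke does not exist; pushing off the real axis tends to \emph{increase} the unperturbed $\mathrm{Im}\,V^\pm$, and you must show that the linear gain $-4k_2\pi s$ beats a possibly growing quadratic term for all $s$ up to the stopping height. Second, your claim that $\mathrm{Im}\,V^\pm \leqslant \mathrm{Vol}(M_{K_\theta})$ on $D_\delta$ is false. On the real slice one computes, using $\mathrm{Li}_2(e^{2i\theta})=\tfrac{\pi^2}{6}+\theta(\theta-\pi)+2i\Lambda(\theta)$, that $\mathrm{Im}\,V^\pm(x,y)=2\Lambda(y+x)+2\Lambda(y-x)$, whose maximum over $D$ is $4\Lambda(\pi/6)=\mathrm{Vol}(S^3\setminus K_{4_1})$, which is strictly larger than $\mathrm{Vol}(M_{K_\theta})$ whenever $\theta>0$. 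Proposition \ref{max} gives the bound you want only on the deformed surfaces $S^\pm$, not on the real slice $D_\delta$. Consequently a fixed small shift $s_0$ cannot bring the integrand below $\mathrm{Vol}(M_{K_\theta})-\epsilon$: near $(x,y)=(0,\pi/6)$ the starting value already exceeds $\mathrm{Vol}(M_{K_\theta})$ and the unperturbed part grows as you shift, so the gain $-c|k_2|s_0$ need not dominate.

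The way to close the gap (and what the paper actually does) is to bound $\frac{\partial\,\mathrm{Im}\,V^{(k_0,k_1,k_2),\pm}}{\partial\,\mathrm{Im}(y)}=4\mathrm{Re}(x)+2\arg\bigl(1-e^{-2i(y+x)}\bigr)+2\arg\bigl(1-e^{2i(y-x)}\bigr)-4k_2\pi$ \emph{uniformly} in the imaginary parts, using that in $D_{\mathbb C}$ one has $0<\arg\bigl(1-e^{-2i(y+x)}\bigr)<\pi-2(\mathrm{Re}(y)+\mathrm{Re}(x))$ and $2(\mathrm{Re}(y)-\mathrm{Re}(x))-\pi<\arg\bigl(1-e^{2i(y-x)}\bigr)<0$. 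For $k_2>0$ (and symmetrically for $k_2<0$ with the opposite sign of the shift) this gives the derivative $<-2\pi$ uniformly, so the imaginary part decreases at a definite rate for \emph{all} shift heights; pushing far enough then brings it below $\mathrm{Vol}(M_{K_\theta})-\epsilon$, while on the side walls $(x,y)\in\partial D_\delta$ the value starts below $\mathrm{Vol}(M_{K_\theta})$ and only decreases. Note also that $D_{\mathbb C,\delta}$ imposes no constraint on imaginary parts, so there is no issue with leaving the holomorphic domain; your worry there is moot and the constraint you actually need is the uniform derivative bound, not a bounded shift.
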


\begin{proof} In $D_{\mathbb C},$ we have
$$0<\arg (1-e^{-2i(y+x)}) <\pi -2(\mathrm{Re}(y)+\mathrm{Re}(x))$$
and
$$2(\mathrm{Re}(y)-\mathrm{Re}(x))-\pi <\arg(1-e^{2i(y-x)})<0.$$

For $k_2>0,$ let $y=\mathrm{Re}(y)+il.$ Then
\begin{equation*}
\begin{split}
\frac{\partial \mathrm{Im}V^{(k_0, k_1,k_2)}}{\partial l}&=4\mathrm{Re}(x)+2\arg (1-e^{-2i(y+x)})+2\arg(1-e^{2i(y-x)})-4k_2\pi\\
&<4x+2(\pi-2(\mathrm{Re}(y)+\mathrm{Re}(x)))+0-4k_2\pi\\
&=2\pi -4\mathrm{Re}(y)-4k_2\pi<-2\pi,
\end{split}
\end{equation*}
where the last inequality comes from that $0<\mathrm{Re}(y)<\frac{\pi}{2}$ and $k_2>0.$ Therefore, pushing the integral domain along the $il$ direction far enough (without changing $\mathrm{Im}(x)$), the imaginary part of $\mathrm{Im}V^{(k_0,k_1,k_2),\pm}$ becomes smaller than the volume. Since $\mathrm{Im}V^{(k_0,k_1,k_2),\pm}$ is already smaller than the volume of $M_{K_\theta}$ on $\partial D_{\delta},$ it becomes even smaller on the side.

For $k_2<0,$ let $y=\mathrm{Re}(y)-il.$ Then
\begin{equation*}
\begin{split}
\frac{\partial \mathrm{Im}V^{(k_0,k_1,k_2),\pm}}{\partial l}&=-4\mathrm{Re}(x)-2\arg (1-e^{-2i(y+x)})-2\arg(1-e^{2i(y-x)})+4k_2\pi\\
&<-4x-0-2(2(\mathrm{Re}(y)-\mathrm{Re}(x))-\pi)+4k_2\pi\\
&=2\pi -4\mathrm{Re}(y)+4k_2\pi<-2\pi,
\end{split}
\end{equation*}
where the last inequality comes from that $0<\mathrm{Re}(y)<\frac{\pi}{2}$ again and $k_2<0.$ Therefore, pushing the integral domain along the $-il$ direction far enough (without changing $\mathrm{Im}(x)$), the imaginary part of $\mathrm{Im}V^{(k_0,k_1,k_2),\pm}$ becomes smaller than the volume of $M_{K_\theta}.$ Since $\mathrm{Im}V^{(k_0,k_1,k_2),\pm}$ is already smaller than the volume of $M_{K_\theta}$ on $\partial D_{\delta},$ it becomes even smaller on the side.
\end{proof}

\begin{lemma}\label{k1} For $(k_0,k_1)$ so that $\frac{k_0}{q}+k_1\neq 0,$ 
$$\hat F^\pm_D(k_0, k_1,0)=O\Big(e^{\frac{r}{4\pi}\big(\mathrm{Vol}(M_{K_\theta})-\epsilon\big)}\Big).$$
\end{lemma}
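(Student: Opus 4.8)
The plan is to mimic the argument of Lemma~\ref{k2}, but this time deforming the integration domain in the \emph{real} directions rather than in the imaginary directions, since it is now the real part of the variable that controls the relevant contour shift. Write $V^{(k_0,k_1,0),\pm}(x,y)=V^\pm(x,y)-\big(\tfrac{4k_0\pi}{q}+4k_1\pi\big)x$ and set $\lambda=\tfrac{k_0}{q}+k_1\neq 0$, so that the extra term is $-4\lambda\pi x$. First I would record, exactly as in the proof of Lemma~\ref{k2}, the bounds on the arguments of $1-e^{-2i(y+x)}$ and $1-e^{2i(y-x)}$ valid in $D_{\mathbb C}$, together with the formula
\begin{equation*}
\frac{\partial V^{(k_0,k_1,0),\pm}}{\partial x}=\frac{-2px\pm\theta}{q}+4y-2\pi-2i\log\big(1-e^{-2i(y+x)}\big)+2i\log\big(1-e^{2i(y-x)}\big)-4\lambda\pi,
\end{equation*}
obtained by differentiating the expression for $V^\pm$ in Section~\ref{geometry}. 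The point is that on $D_{\frac{\delta}{2}}$ the term $-4\lambda\pi$ with $|\lambda|\geqslant \tfrac1q$ dominates the bounded remaining terms once we move far enough, so $\mathrm{Im}V^{(k_0,k_1,0),\pm}$ is strictly monotone along a suitable real push of one of the variables.

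The key computation is to choose which variable to push and in which direction. Since $\lambda$ need not be an integer, I cannot simply translate $x$ by a lattice vector; instead I would exploit that $\mathrm{Im}V^\pm$ is harmonic (Lemma~\ref{convexity}) and, by analyticity, shift the domain $D_{\frac{\delta}{2}}$ into $D_{\mathbb C,\frac{\delta}{2}}$ along a direction in which $\tfrac{\partial}{\partial(\mathrm{Re}\,x)}\mathrm{Im}V^{(k_0,k_1,0),\pm}$ has a fixed sign of absolute value bounded below. Concretely, writing $x=s+i\,t$ with $t$ fixed and varying $s$, one computes $\partial_s\,\mathrm{Im}V^{(k_0,k_1,0),\pm}$ and sees that the $\mathrm{Li}_2$-contributions contribute only terms bounded in absolute value by a universal constant on $\overline{D_{\mathbb C,\delta/2}}$ (this is where the strip $|\mathrm{Im}\,x|<L$, $|\mathrm{Im}\,y|<L$ of Lemma~\ref{converge}/Proposition~\ref{leading} is used), while the linear term $-4\lambda\pi s$ contributes a fixed $-4\lambda\pi$; hence for $|\lambda|\geqslant\tfrac1q$ the derivative is bounded away from $0$, so moving the boundary of $D_{\frac\delta2}$ outward (or inward) by a fixed large amount makes $\mathrm{Im}V^{(k_0,k_1,0),\pm}$ drop below $\mathrm{Vol}(M_{K_\theta})-\epsilon$ everywhere on the deformed domain. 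Combined with the fact, already used in Proposition~\ref{max}, that $\mathrm{Im}V^\pm$ is already $\leqslant\frac12\mathrm{Vol}(\mathrm S^3\setminus K_{4_1})+\epsilon<\mathrm{Vol}(M_{K_\theta})-\epsilon$ on $\partial D_\delta$ (after shrinking $\epsilon$), the integral over the deformed surface, and hence over $D_{\frac\delta2}$, is $O\big(\mathrm{vol}(D_{\delta/2})\cdot e^{\frac{r}{4\pi}(\mathrm{Vol}(M_{K_\theta})-\epsilon)}\big)=O\big(e^{\frac{r}{4\pi}(\mathrm{Vol}(M_{K_\theta})-\epsilon)}\big)$, as claimed.

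The main obstacle I anticipate is making the contour deformation rigorous when $\lambda$ is irrational or when the naive real translation would leave the region $D_{\mathbb C}$ (where $V^\pm$ is holomorphic and the sign bounds on the $\mathrm{Li}_2$ arguments hold): one has to push only in a direction that stays inside $D_{\mathbb C,\delta/2}$ for the whole homotopy, track the side contributions of the homotopy, and check that $\mathrm{Im}V^\pm$ does not spike on the sides. This is handled by the same two-step device as in Proposition~\ref{max}: estimate $\mathrm{Im}V^{(k_0,k_1,0),\pm}$ on the ``bottom'' $D_\delta$ (where it is below $\frac12\mathrm{Vol}(\mathrm S^3\setminus K_{4_1})+\epsilon$ by Proposition~\ref{bound}), on the pushed ``top'' (where the monotonicity in $\mathrm{Re}\,x$ has made it small), and use harmonicity/concavity of $\mathrm{Im}V^\pm$ to control the ``side''. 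A secondary, routine point is passing from $\hat F^\pm_D$ to $\hat F^\pm$: by Lemma~\ref{converge} the $\varphi_r$'s differ from the $\mathrm{Li}_2$'s by $O(1/r^2)$ uniformly on the (bounded) deformed surface, so replacing $V_r^{(k_0,k_1,k_2),\pm}$ by $V^{(k_0,k_1,k_2),\pm}$ only changes the estimate by a bounded multiplicative factor.
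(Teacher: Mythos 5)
Your plan to ``deform the integration domain in the real directions'' does not work, and the error is already visible in the key computation: since $\lambda=\tfrac{k_0}{q}+k_1$ is real, the linear term $-4\lambda\pi x$ has imaginary part $-4\lambda\pi\,\mathrm{Im}(x)$, which is \emph{independent of} $\mathrm{Re}(x)$. The modulus of the integrand is $e^{\frac{r}{4\pi}\mathrm{Im}V^{(k_0,k_1,0),\pm}}$, so the quantity you must decrease is $\mathrm{Im}V^{(k_0,k_1,0),\pm}$, and writing $x=s+it$ with $t$ fixed you get
\begin{equation*}
\partial_s\,\mathrm{Im}V^{(k_0,k_1,0),\pm}=\mathrm{Im}\Big(\partial_x V^{(k_0,k_1,0),\pm}\Big),
\end{equation*}
in which the linear term contributes $\mathrm{Im}(-4\lambda\pi)=0$, not $-4\lambda\pi$ as you claim. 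So a real translation does not make the linear term do anything, and it would also not be a legitimate contour deformation in the first place (it replaces $D_{\delta/2}$ by a different subset of $\mathbb R^2$, which changes the integral). Even if you instead push in the imaginary $x$-direction, a direct sign-of-derivative argument fails: $\partial_t\,\mathrm{Im}V^{(k_0,k_1,0),\pm}$ also contains the contributions $4\mathrm{Re}(y)$ from $4xy$ and $-2\pi$ from $-2\pi x$, which are of order $\pi$ and easily swamp $-4\lambda\pi$ when $|\lambda|=\tfrac1q$ with $q$ large. This is precisely why the paper's proof of Lemma~\ref{k2} (and Lemmas~\ref{D'}, \ref{D''}) pushes in $\mathrm{Im}(y)$ where the $4k_2\pi$ term dominates, but the proof of Lemma~\ref{k1} does something structurally different.

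The paper's actual argument for Lemma~\ref{k1} does \emph{not} seek a sign-definite derivative. It deforms the contour to the same surfaces $S^\pm$ already introduced for the leading term (where $\mathrm{Im}(x)=\pm\mathrm{Im}(x_0)$ and $\mathrm{Im}(y)=\mathrm{Im}(y_0)$ are fixed), so the extra linear term contributes the \emph{constant} $\mp4\lambda\pi\,\mathrm{Im}(x_0)$ to $\mathrm{Im}V^{(k_0,k_1,0),\pm}$ on the whole top face, and Proposition~\ref{max} already gives $\mathrm{Im}V^\pm\leqslant\mathrm{Vol}(M_{K_\theta})$ there. The crucial ingredient that your proposal never invokes is Lemma~\ref{n0}, $\mathrm{Im}(x_0)\neq 0$: without it the constant vanishes and the estimate fails. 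The paper then runs a careful case analysis on the signs of $\mathrm{Im}(x_0)$ and of $\lambda$ (together with the observation $V^- - V^+ = -\tfrac{2\theta}{q}x$ to handle both $\hat F^+$ and $\hat F^-$ on the same surface, and the two inequalities that the paper labels (\ref{>}) and (\ref{<}) to keep the shift strictly the right sign when $\mathrm{Im}(x_0)<0$), and controls the side face by the concavity of Lemma~\ref{convexity} exactly as in Proposition~\ref{max}. You should redo the argument along these lines; as written, yours has no mechanism by which the linear perturbation lowers $\mathrm{Im}V$.
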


\begin{proof}  Here we recall that
$S^\pm= S^\pm_{\text{top}}\cup S^\pm_{\text{side}}\cup(D_{\frac{\delta}{2}}\setminus D_{\delta}),$ where
$$S^\pm_{\text{top}}=\Big\{(x,y)\in D_{\mathbb C,\delta}\ |\ (\mathrm{Im}(x),\mathrm{Im}(y))=(\pm\mathrm{Im}(x_0),\mathrm{Im}(y_0))\Big\}$$ and 
$$S^\pm_{\text{side}}=\Big\{ (\theta_1\pm it\mathrm{Im}(x_0), \theta_2+it \mathrm{Im}(y_0)) \ |\ (\theta_1,\theta_2)\in \partial D_{\delta}, t\in[0,1]\Big\}.$$

By Proposition \ref{max},  for any $(x,y)\in S^\pm_{\text{top}}$ we respectively have 
$$\mathrm{Im}V^\pm(x,y)\leqslant\mathrm{Im}V^\pm(\pm x_0,y_0)=\mathrm{Vol}(M_{K_\theta}).$$

By Lemma \ref{n0}, $\mathrm{Im}(x_0)\neq 0.$ We first consider the case that $\mathrm{Im}(x_0)>0.$ If $\frac{k_0}{q}+k_1>0,$ then on $S^+_{\text{top}}$ we have
\begin{equation*}
\begin{split}
\mathrm{Im}V^{(k_0, k_1, 0),+}(x,y)=&\mathrm{Im}V^+(x,y)-\frac{4k_0\pi}{q}\mathrm{Im}(x_0)-4k_1\pi\mathrm{Im}(x_0)\\
<&\mathrm{Im}V^+(x,y)\leqslant \mathrm{Vol}(M_{K_\theta}),
\end{split}
\end{equation*}
and
\begin{equation*}
\begin{split}
\mathrm{Im}V^{(k_0, k_1, 0),-}(x,y)=&\mathrm{Im}V^{(k_0, k_1, 0),+}(x,y)-\frac{2\theta}{q}\mathrm{Im}(x_0)\\
=&\mathrm{Im}V^+(x,y)-\frac{2\theta}{q}\mathrm{Im}(x_0)-\frac{4k_0\pi}{q}\mathrm{Im}(x_0)-4k_1\pi\mathrm{Im}(x_0)\\
<&\mathrm{Im}V^+(x,y) \leqslant \mathrm{Vol}(M_{K_\theta}).
\end{split}
\end{equation*}
If $\frac{k_0}{q}+k_1<0,$ then on $S^-_{\text{top}}$ we have 
\begin{equation*}
\begin{split}
\mathrm{Im}V^{(k_0, k_1, 0),+}(x,y)=&\mathrm{Im}V^{(k_0, k_1, 0),-}(x,y)-\frac{2\theta}{q}\mathrm{Im}(x_0)\\
=&\mathrm{Im}V^-(x,y)-\frac{2\theta}{q}\mathrm{Im}(x_0)+\frac{4k_0\pi}{q}\mathrm{Im}(x_0)+4k_1\pi\mathrm{Im}(x_0)\\
<&\mathrm{Im}V^-(x,y) \leqslant \mathrm{Vol}(M_{K_\theta}),
\end{split}
\end{equation*}
and
\begin{equation*}
\begin{split}
\mathrm{Im}V^{(k_0, k_1, 0),-}(x,y)=&\mathrm{Im}V^-(x,y)+\frac{4k_0\pi}{q}\mathrm{Im}(x_0)+4k_1\pi\mathrm{Im}(x_0)\\
<&\mathrm{Im}V^-(x,y) \leqslant \mathrm{Vol}(M_{K_\theta}).
\end{split}
\end{equation*}

Next we consider the case that  $\mathrm{Im}(x_0)<0.$ Due to the fact that $k_0$ and $k_1$ are integers and $\theta \in (0,2\pi),$ if $\frac{k_0}{q}+k_1>0,$ then we have $\frac{k_0}{q}+k_1\geqslant \frac{1}{q}$ and 
\begin{equation}\label{>}
\frac{4\pi k_0}{q}+4\pi k_1-\frac{2\theta}{q}>0;
\end{equation}
  and if $\frac{k_0}{q}+k_1<0,$ then we have $\frac{k_0}{q}+k_1\leqslant -\frac{1}{q}$ and 
\begin{equation}\label{<}
\frac{4\pi k_0}{q}+4\pi k_1+\frac{2\theta}{q}<0.
\end{equation}
Now for $(k_0,k_1)$ such that $\frac{k_0}{q}+k_1>0,$ we have on $S^-_{\text{top}}$ that 
\begin{equation*}
\begin{split}
\mathrm{Im}V^{(k_0, k_1, 0),+}(x,y)=&\mathrm{Im}V^{(k_0, k_1, 0),-}(x,y)-\frac{2\theta}{q}\mathrm{Im}(x_0)\\
=&\mathrm{Im}V^-(x,y)-\frac{2\theta}{q}\mathrm{Im}(x_0)+\frac{4k_0\pi}{q}\mathrm{Im}(x_0)+4k_1\pi\mathrm{Im}(x_0)\\
<&\mathrm{Im}V^-(x,y) \leqslant \mathrm{Vol}(M_{K_\theta}),
\end{split}
\end{equation*}
where the penultimate inequality comes from (\ref{>}), and
\begin{equation*}
\begin{split}
\mathrm{Im}V^{(k_0, k_1, 0),-}(x,y)=&\mathrm{Im}V^-(x,y)+\frac{4k_0\pi}{q}\mathrm{Im}(x_0)+4k_1\pi\mathrm{Im}(x_0)\\
<&\mathrm{Im}V^-(x,y) \leqslant \mathrm{Vol}(M_{K_\theta}).
\end{split}
\end{equation*}
For $(k_0,k_1)$ such that $\frac{k_0}{q}+k_1<0,$ we have on $S^+_{\text{top}}$ that 
\begin{equation*}
\begin{split}
\mathrm{Im}V^{(k_0, k_1, 0),+}(x,y)=&\mathrm{Im}V^+(x,y)-\frac{4k_0\pi}{q}\mathrm{Im}(x_0)-4k_1\pi\mathrm{Im}(x_0)\\
<&\mathrm{Im}V^+(x,y)\leqslant \mathrm{Vol}(M_{K_\theta}),
\end{split}
\end{equation*}
and
\begin{equation*}
\begin{split}
\mathrm{Im}V^{(k_0, k_1, 0),-}(x,y)=&\mathrm{Im}V^{(k_0, k_1, 0),+}(x,y)-\frac{2\theta}{q}\mathrm{Im}(x_0)\\
=&\mathrm{Im}V^+(x,y)-\frac{2\theta}{q}\mathrm{Im}(x_0)-\frac{4k_0\pi}{q}\mathrm{Im}(x_0)-4k_1\pi\mathrm{Im}(x_0)\\
<&\mathrm{Im}V^+(x,y) \leqslant \mathrm{Vol}(M_{K_\theta})
\end{split}
\end{equation*}
where the penultimate inequality comes from (\ref{<}).

We note that $V^{(k_0,k_1,k_2),\pm}$ differs from $V^\pm$ by a linear function. Therefore, for each $(\theta_1, \theta_2)\in\partial D_{\delta},$ the function 
$$g^\pm_{(\theta_1,\theta_2)}(t)\doteq \mathrm{Im}V^{(k_0, k_1, 0)}(\theta_1\pm it\mathrm{Im}(x_0),\theta_2+it\mathrm{Im}(y_0))$$
is concave up on $[0,1],$ and hence 
$$g^{\pm}_{(\theta_1,\theta_2)}(t)\leqslant \max\Big\{g^{\pm}_{(\theta_1,\theta_2)}(0), g^{\pm}_{(\theta_1,\theta_2)}(1)\Big\}<\mathrm{Vol}(M_{K_\theta}).$$ 

Putting all together, we have: In the $\mathrm{Im}(x_0)>0$ case, if $\frac{k_0}{q}+k_1>0,$ then $\mathrm{Im}V^{(k_0, k_1, 0),\pm}(x,y)<\mathrm{Vol}(M_{K_\theta})$ on $S^+,$ and if  $\frac{k_0}{q}+k_1<0,$ then $\mathrm{Im}V^{(k_0, k_1, 0),\pm}(x,y)<\mathrm{Vol}(M_{K_\theta})$ on $S^-.$ In the $\mathrm{Im}(x_0)<0$ case, if  $\frac{k_0}{q}+k_1>0,$ then $\mathrm{Im}V^{(k_0, k_1, 0),\pm}(x,y)<\mathrm{Vol}(M_{K_\theta})$ on $S^-,$ and  if $\frac{k_0}{q}+k_1<0,$ then $\mathrm{Im}V^{(k_0, k_1, 0),\pm}(x,y)<\mathrm{Vol}(M_{K_\theta})$ on $S^+.$
\end{proof}


\subsubsection{Estimate on $D_{\frac{\delta}{2}}'$}

Let 
$$\hat F^\pm_{D'}(k_0, k_1,k_2)=\int_{D'_\delta}e^{\frac{r}{4\pi i}V^{(k_0,k_1,k_2),\pm}(x,y)}dxdy.$$

\begin{lemma}\label{D'}
 For any triple $(k_0, k_1, k_2),$ 
$$\hat F^\pm_{D'}(k_0, k_1,k_2)=O\Big(e^{\frac{r}{4\pi}\big(\mathrm{Vol}(M_{K_\theta})-\epsilon\big)}\Big).$$
\end{lemma}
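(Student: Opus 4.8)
The plan is to bound $\mathrm{Im}V^{(k_0,k_1,k_2),\pm}$ on the region $D'_{\mathbb C,\delta}$ strictly below $\mathrm{Vol}(M_{K_\theta})$, which by the usual contour-deformation argument (exactly as in Lemmas \ref{k2} and \ref{k1}) gives the desired exponential bound. The key point is that on $D'$ the geometric interpretation of the critical points breaks down: $D'$ corresponds to $\pi+\delta<\mathrm{Re}(y)-\mathrm{Re}(x)<\tfrac{3\pi}{2}-\delta$, so $e^{2i(y-x)}$ is no longer the shape parameter $B$ of a positively oriented tetrahedron, and there is no critical point of $V^\pm$ sitting inside $D'_{\mathbb C}$ whose critical value equals the volume. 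Consequently I expect $\mathrm{Im}V^\pm$ itself — before any twisting by $(k_0,k_1,k_2)$ — to already be bounded above by something strictly less than $\mathrm{Vol}(M_{K_\theta})$ on the relevant contour, and the linear perturbation by $-\tfrac{4k_0\pi x}{q}-4k_1\pi x-4k_2\pi y$ can only be controlled by first pushing the contour in the imaginary directions.

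Concretely, I would proceed as follows. First, I would write $\mathrm{Im}V^{(k_0,k_1,k_2),\pm}(x,y)$ explicitly on $D'_{\mathbb C}$ using the dilogarithm identity (\ref{Li2}) adapted to the branch $\pi<\mathrm{Re}(y)-\mathrm{Re}(x)<\tfrac{3\pi}{2}$ — note the argument of $e^{2i(y-x)}$ now lies in a different interval, so the analog of (\ref{equal}) will have different linear and constant terms. Second, on the portion of the contour lying in the horizontal plane $(\mathrm{Im}(x),\mathrm{Im}(y))=$ const, I would use the concavity statement of Lemma \ref{convexity} (which holds verbatim on $D'_{\mathbb C}$ since $\mathrm{Im}\,e^{-2i(y+x)}>0$ still, and the sign of $\mathrm{Im}\,e^{2i(y-x)}$ enters quadratically) to reduce the maximum to the boundary $\partial D'_\delta$, where Proposition \ref{bound} applies and gives $\mathrm{Im}V\leqslant \tfrac12\mathrm{Vol}(S^3\setminus K_{4_1})+\epsilon<\mathrm{Vol}(M_{K_\theta})$. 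Third, to absorb the linear twist I would compute $\partial\mathrm{Im}V^{(k_0,k_1,k_2),\pm}/\partial(\mathrm{Im}\,y)$ (and similarly in $\mathrm{Im}\,x$) using the bounds $0<\arg(1-e^{-2i(y+x)})<\pi-2(\mathrm{Re}(y)+\mathrm{Re}(x))$ and the corresponding bound for $\arg(1-e^{2i(y-x)})$ valid on $D'_{\mathbb C}$, and check that by pushing the contour sufficiently far in the appropriate imaginary direction (depending on the sign of $k_2$, then $k_1$ and $k_0$) the imaginary part drops below the volume; combined with concavity of the one-variable restrictions $g^\pm_{(\theta_1,\theta_2)}(t)$ along the connecting "side" surface, this gives the uniform bound on the whole deformed contour.

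The main obstacle I anticipate is bookkeeping of branches: on $D'$ the factor $1-e^{2i(y-x)}$ sits near a region where the standard logarithm's argument is close to its branch cut, so one must be careful that $\varphi_r$ (hence $\mathrm{Li}_2$) is evaluated at the correct shifted arguments — recall cases (2) and (3) of Proposition \ref{formula} precisely handle $D'$ and $D''$ with $\varphi_r(y-x-\pi+\tfrac{\pi}{r})$ in place of $\varphi_r(y-x+\tfrac{\pi}{r})$. I would need to carry this $-\pi$ shift through the passage from $V^\pm_r$ to $V^\pm$ and verify that the resulting holomorphic function on $D'_{\mathbb C}$ has no critical point with critical value reaching the volume; this is where the hypothesis that $D'$ lies "outside" the geometric region is used, and it may require either a direct estimate of $\mathrm{Im}V^\pm$ at an interior critical point (showing it is too small) or an argument that the only relevant critical point escapes $D'_{\mathbb C}$ entirely, analogous to the role of Lemma \ref{in} and Corollary \ref{c} on $D$. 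Once the contour estimate $\mathrm{Im}V^{(k_0,k_1,k_2),\pm}<\mathrm{Vol}(M_{K_\theta})-\epsilon'$ is in hand on the deformed domain, the bound $\hat F^\pm_{D'}(k_0,k_1,k_2)=O\big(e^{\frac{r}{4\pi}(\mathrm{Vol}(M_{K_\theta})-\epsilon)}\big)$ is immediate by estimating the integrand by its absolute value.
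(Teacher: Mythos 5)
Your general plan---deform the contour in the imaginary direction so that $\mathrm{Im}\,V^{(k_0,k_1,k_2),\pm}$ drops below $\mathrm{Vol}(M_{K_\theta})$ and then estimate the integral by the sup of the integrand---is the right one, and the $\arg$ bounds you cite are exactly the tool. But your proposal misses the key structural fact that makes the $D'$ (and $D''$) case uniform in \emph{all} of $(k_0,k_1,k_2)$, and as a result it is organized around unnecessary casework and around a nonexistent critical point.

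The point is that in $D'_{\delta}$ one has $\mathrm{Re}(y)>\tfrac{\pi}{2}+\tfrac{\delta}{2}$ (adding $y+x>\delta$ to $y-x>\pi+\delta$). The paper then pushes the contour only in the $\mathrm{Im}(y)$-direction, setting $y=\mathrm{Re}(y)+il$, and computes
\begin{equation*}
\frac{\partial\,\mathrm{Im}\,V^{(k_0,k_1,k_2),\pm}}{\partial l}
= 4\mathrm{Re}(x)+2\arg\bigl(1-e^{-2i(y+x)}\bigr)+2\arg\bigl(1-e^{2i(y-x)}\bigr)-4k_2\pi
< 2\pi-4\mathrm{Re}(y)-4k_2\pi < -2\delta
\end{equation*}
for $k_2\geqslant 0$, and the mirror computation for $k_2<0$ with the push in $-il$. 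Two things follow at once. First, because the push is in $y$ only, the terms $-\tfrac{4k_0\pi x}{q}-4k_1\pi x$ contribute nothing to this derivative; there is no casework on $k_0$ or $k_1$ at all, contrary to your plan to condition ``on the sign of $k_2$, then $k_1$ and $k_0$'' and to push ``similarly in $\mathrm{Im}\,x$.'' Second, and decisively, because $\mathrm{Re}(y)>\tfrac{\pi}{2}+\tfrac{\delta}{2}$ the bound is $<-2\delta$ even when $k_2=0$; this is precisely what is false in $D$ (where $\mathrm{Re}(y)<\tfrac{\pi}{2}$) and why Lemma \ref{k2} requires $k_2\neq 0$ while Lemma \ref{D'} holds for every triple. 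Once the derivative is uniformly $<-2\delta$, one simply pushes a fixed distance $l$ until the top drops below $\mathrm{Vol}(M_{K_\theta})-\epsilon$; the side contribution starts below the volume on $\partial D'_{\delta}$ (by the same Lobachevsky estimate behind Proposition \ref{bound}) and can only decrease. There is no $S^\pm$-type ``top surface'' through a critical point, no appeal to Lemma \ref{convexity}, and no need to locate or exclude critical points of $V^\pm$ inside $D'_{\mathbb C}$.

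Your step~2 is therefore a genuine gap as written: you propose to reduce a maximum to $\partial D'_\delta$ by concavity and then invoke Proposition \ref{bound}, but concavity only relocates a maximum to the boundary when there is no interior critical point on that slice (which you have not established for the twisted potential on an arbitrary imaginary level), and Proposition \ref{bound} is a statement about the real locus, not about a constant-$\mathrm{Im}$ slice at nonzero height. Your worry about branch bookkeeping, on the other hand, is a non-issue: the inequalities $0<\arg(1-e^{-2i(y+x)})<\pi-2(\mathrm{Re}(y)+\mathrm{Re}(x))$ and $2(\mathrm{Re}(y)-\mathrm{Re}(x))-3\pi<\arg(1-e^{2i(y-x)})<0$ are valid verbatim on $D'_{\mathbb C,\delta}$ and feed directly into the derivative computation above, with no need to rewrite $V^\pm$ via \eqref{Li2} on a shifted branch or to analyze any critical points.
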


\begin{proof} In $D'_{\mathbb C,\delta},$ we have
$$0<\arg (1-e^{-2i(y+x)}) <\pi -2(\mathrm{Re}(y)+\mathrm{Re}(x))$$
and
$$2(\mathrm{Re}(y)-\mathrm{Re}(x))-3\pi <\arg(1-e^{2i(y-x)})<0.$$

For $k_2\geqslant 0,$ let $y=\mathrm{Re}(y)+il.$ Then
\begin{equation*}
\begin{split}
\frac{\partial \mathrm{Im}V^{(k_0,k_1,k_2),\pm}}{\partial l}&=4\mathrm{Re}(x)+2\arg (1-e^{-2i(y+x)})+2\arg(1-e^{2i(y-x)})-4k_2\pi\\
&<4x+2(\pi-2(\mathrm{Re}(y)+\mathrm{Re}(x)))+0-4k_2\pi\\
&=2\pi -4\mathrm{Re}(y)-4k_2\pi<-2\delta,
\end{split}
\end{equation*}
where the last inequality comes from that $\frac{\pi}{2}+\frac{\delta}{2}<\mathrm{Re}(y)<\pi-\frac{\delta}{2}$ and $k_2\geqslant0.$ Therefore, pushing the integral domain along the $il$ direction far enough (without changing $\mathrm{Im}(x)$), the imaginary part of $\mathrm{Im}V^{(k_0,k_1,k_2),\pm}$ becomes smaller than the volume of $M_{K_\theta}.$ Since $\mathrm{Im}V^{(k_0,k_1,k_2),\pm}$ is already smaller than the volume of $M_{K_\theta}$ on $\partial D'_{\delta},$ it becomes even smaller on the side.

For $k_2<0,$ let $y=\mathrm{Re}(y)-il.$ Then
\begin{equation*}
\begin{split}
\frac{\partial \mathrm{Im}V^{(k_0,k_1,k_2),\pm}}{\partial l}&=-4\mathrm{Re}(x)-2\arg (1-e^{-2i(y+x)})-2\arg(1-e^{2i(y-x)})+4k_2\pi\\
&<-4x-0-2(2(\mathrm{Re}(y)-\mathrm{Re}(x))-3\pi)+4k_2\pi\\
&=6\pi -4\mathrm{Re}(y)+4k_2\pi<-2\delta,
\end{split}
\end{equation*}
where the last inequality comes from that $\frac{\pi}{2}+\frac{\delta}{2}<\mathrm{Re}(y)<\pi-\frac{\delta}{2}$ again and $k_2<0.$ Therefore, pushing the integral domain along the $-il$ direction far enough (without changing $\mathrm{Im}(x)$), the imaginary part of $\mathrm{Im}V^{(k_0,k_1,k_2),\pm}$ becomes smaller than the volume of $M_{K_\theta}.$ Since $\mathrm{Im}V^{(k_0,k_1,k_2),\pm}$ is already smaller than the volume of $M_{K_\theta}$ on $\partial D'_{\delta},$ it becomes even smaller on the side.
\end{proof}

\subsubsection{Estimate on $D_{\frac{\delta}{2}}''$}
Let 
$$\hat F^\pm_{D''}(k_0, k_1, k_2)=\int_{D''_{\delta}}e^{\frac{r}{4\pi i}V^{(k_0,k_1,k_2),\pm}(x,y)}dxdy.$$

\begin{lemma}\label{D''}
 For any triple $(k_0, k_1, k_2),$ 
$$\hat F^\pm_{D''}(k_0, k_1,k_2)=O\Big(e^{\frac{r}{4\pi}\big(\mathrm{Vol}(M_{K_\theta})-\epsilon\big)}\Big).$$
\end{lemma}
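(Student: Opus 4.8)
The plan is to follow the proof of Lemma~\ref{D'} (and the computation in Lemma~\ref{k2}) essentially verbatim, with the roles of the two coordinates $y+x$ and $y-x$ interchanged, since $D''$ is the image of $D'$ under $x\mapsto -x$. First I would record the two argument estimates on $D''_{\mathbb C,\delta}$ needed to run a vertical-push argument: keeping $\mathrm{Im}(x)=0$ and moving only $\mathrm{Im}(y)$, one has on $D''_{\mathbb C,\delta}$
$$0<\arg\bigl(1-e^{-2i(y+x)}\bigr)<3\pi-2\bigl(\mathrm{Re}(y)+\mathrm{Re}(x)\bigr),\qquad 2\bigl(\mathrm{Re}(y)-\mathrm{Re}(x)\bigr)-\pi<\arg\bigl(1-e^{2i(y-x)}\bigr)<0.$$
The second is the same bound used on $D_{\mathbb C,\delta}$ in Lemma~\ref{k2}, since $\mathrm{Re}(y-x)\in(\delta,\frac{\pi}{2}-\delta)$ on $D''$; the first holds because $\mathrm{Re}(y+x)\in(\pi+\delta,\frac{3\pi}{2}-\delta)$ there, so that $\arg(1-e^{-2i(y+x)})$ moves monotonically, as $\mathrm{Im}(y)\to+\infty$, from its value $\frac{3\pi}{2}-\mathrm{Re}(y+x)$ on the real slice toward its limit $3\pi-2\mathrm{Re}(y+x)$, both lying in $(0,\pi)$.

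Next, exactly as in Lemma~\ref{k2}, one computes $\partial_l\,\mathrm{Im}V^{(k_0,k_1,k_2),\pm}=\pm\bigl(4\mathrm{Re}(x)+2\arg(1-e^{-2i(y+x)})+2\arg(1-e^{2i(y-x)})-4k_2\pi\bigr)$ according as $y$ is pushed along $+il$ or along $-il$ (note that the linear terms $-\frac{4k_0\pi x}{q}-4k_1\pi x$ stay real and contribute nothing). The case split here differs slightly from the one on $D'$, because the upper bound $3\pi-2\mathrm{Re}(y+x)$ is now the large quantity. When $k_2\leqslant 0$ I would push along $-il$ and combine $\arg(1-e^{-2i(y+x)})>0$ with the lower bound on $\arg(1-e^{2i(y-x)})$ to get $\partial_l\,\mathrm{Im}V^{(k_0,k_1,k_2),\pm}<2\pi-4\mathrm{Re}(y)+4k_2\pi<-2\delta$, using $\mathrm{Re}(y)>\frac{\pi}{2}+\delta$ on $D''_\delta$. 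When $k_2\geqslant 1$ I would push along $+il$ and combine the upper bound on $\arg(1-e^{-2i(y+x)})$ with $\arg(1-e^{2i(y-x)})<0$ to get $\partial_l\,\mathrm{Im}V^{(k_0,k_1,k_2),\pm}<6\pi-4\mathrm{Re}(y)-4k_2\pi\leqslant 2\pi-4\mathrm{Re}(y)<-2\delta$. Either way $\mathrm{Im}V^{(k_0,k_1,k_2),\pm}$ strictly decreases along the push.

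Finally I would deform the contour. By analyticity — along the deformation neither $e^{-2i(y+x)}$ nor $e^{2i(y-x)}$ meets $[1,\infty)$, since the arguments stay bounded away from $0$, so the integrand remains holomorphic — $\hat F^\pm_{D''}(k_0,k_1,k_2)$ is unchanged if we replace $D''_\delta$ by the $2$-chain obtained by pushing its interior into $\mathbb C^2$ a large fixed distance along the chosen $\pm il$ direction and adding a side surface over $\partial D''_\delta$. On $\partial D''_\delta$, each of the four defining equations places $m+m_k$ or $m-m_k$ at an endpoint of the interval of Proposition~\ref{bound}, hence $\mathrm{Im}V^{(k_0,k_1,k_2),\pm}=\mathrm{Im}V^\pm\leqslant\frac12\mathrm{Vol}(S^3\setminus K_{4_1})+\epsilon<\mathrm{Vol}(M_{K_\theta})$ there, the last inequality by the hypothesis $\mathrm{Vol}(M_{K_\theta})>\frac12\mathrm{Vol}(S^3\setminus K_{4_1})$ of Theorem~\ref{main}; by the previous paragraph $\mathrm{Im}V^{(k_0,k_1,k_2),\pm}$ stays below $\mathrm{Vol}(M_{K_\theta})$ on the side and falls below $\mathrm{Vol}(M_{K_\theta})-\epsilon$ on the pushed top once the push is far enough. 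Bounding $|\hat F^\pm_{D''}|$ by the area of the surface times the supremum of $e^{\frac{r}{4\pi}\mathrm{Im}V^{(k_0,k_1,k_2),\pm}}$ then gives $\hat F^\pm_{D''}(k_0,k_1,k_2)=O\bigl(e^{\frac{r}{4\pi}(\mathrm{Vol}(M_{K_\theta})-\epsilon)}\bigr)$.

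I expect no serious obstacle: the whole argument is parallel to Lemma~\ref{D'}. The one point needing care, and the only place $D''$ behaves differently, is the case split in the second paragraph — on $D''$ one cannot push along $+il$ when $k_2=0$ (this would \emph{increase} $\mathrm{Im}V$, because the $3\pi$ coming from $\arg(1-e^{-2i(y+x)})$ is no longer dominated by $-4k_2\pi$), so one is forced to push along $-il$ for every $k_2\leqslant 0$, exploiting the small coordinate $y-x$, and to reserve the $+il$ push for $k_2\geqslant 1$.
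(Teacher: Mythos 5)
Your proof follows the paper's argument for Lemma~\ref{D''} essentially verbatim: the same argument-range estimates on $D''_{\mathbb C,\delta}$, the same vertical push in $\mathrm{Im}(y)$ with the same case split ($k_2>0$ pushed along $+il$, $k_2\leqslant 0$ along $-il$), and the same boundary control from Proposition~\ref{bound}. The extra remark at the end—that the split must be $k_2\leqslant 0$ versus $k_2>0$ rather than the $k_2<0$ versus $k_2\geqslant 0$ split used on $D'$, because the factor $3\pi-2\mathrm{Re}(y+x)$ now lives in the $+il$ derivative—is exactly what the paper's inequality encodes, correctly identified.
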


\begin{proof} In $D''_{\mathbb C,\delta},$ we have
$$0<\arg (1-e^{-2i(y+x)}) <3\pi -2(\mathrm{Re}(y)+\mathrm{Re}(x))$$
and
$$2(\mathrm{Re}(y)-\mathrm{Re}(x))-\pi <\arg(1-e^{2i(y-x)})<0.$$

For $k_2>0,$ let $y=\mathrm{Re}(y)+il.$ Then
\begin{equation*}
\begin{split}
\frac{\partial \mathrm{Im}V^{(k_0,k_1,k_2),\pm}}{\partial l}&=4\mathrm{Re}(x)+2\arg (1-e^{-2i(y+x)})+2\arg(1-e^{2i(y-x)})-4k_2\pi\\
&<4x+2(3\pi-2(\mathrm{Re}(y)+\mathrm{Re}(x)))+0-4k_2\pi\\
&=6\pi -4\mathrm{Re}(y)-4k_2\pi<-2\delta,
\end{split}
\end{equation*}
where the last inequality comes from that $\frac{\pi}{2}+\frac{\delta}{2}<\mathrm{Re}(y)<\pi-\frac{\delta}{2}$ and $k_2>0.$ Therefore, pushing the integral domain along the $il$ direction far enough (without changing $\mathrm{Im}(x)$), the imaginary part of $\mathrm{Im}V^{(k_0,k_1,k_2),\pm}$ becomes smaller than the volume of $M_{K_\theta}.$ Since $\mathrm{Im}V^{(k_0,k_1,k_2),\pm}$ is already smaller than the volume of $M_{K_\theta}$ on $\partial D''_{\delta},$ it becomes even smaller on the side.

For $k_2\leqslant 0,$ let $y=\mathrm{Re}(y)-il.$ Then
\begin{equation*}
\begin{split}
\frac{\partial \mathrm{Im}V^{(k_0,k_1,k_2),\pm}}{\partial l}&=-4\mathrm{Re}(x)-2\arg (1-e^{-2i(y+x)})-2\arg(1-e^{2i(y-x)})+4k_2\pi\\
&<-4x-0-2(2(\mathrm{Re}(y)-\mathrm{Re}(x))-\pi)+4k_2\pi\\
&=2\pi -4\mathrm{Re}(y)+4k_2\pi<-2\delta,
\end{split}
\end{equation*}
where the last inequality comes from that $\frac{\pi}{2}+\frac{\delta}{2}<\mathrm{Re}(y)<\pi-\frac{\delta}{2}$ again and $k_2\leqslant 0.$ Therefore, pushing the integral domain along the $-il$ direction far enough (without changing $\mathrm{Im}(x)$), the imaginary part of $\mathrm{Im}V^{(k_0,k_1,k_2),\pm}$ becomes smaller than the volume of $M_{K_\theta}.$ Since $\mathrm{Im}V^{(k_0,k_1,k_2),\pm}$ is already smaller than the volume of $M_{K_\theta}$ on $\partial D''_{\delta},$ it becomes even smaller on the side.\end{proof}


\subsection{Proof of Theorem {\ref{main}}}\label{proof}

Theorem \ref{main} follows from the following proposition.
\begin{proposition}\label{4.1} 
\begin{enumerate}[(1)]
\item \begin{equation*}
\begin{split}
\hat f^+_r(0,\dots, 0)+&\hat f^-_r(,0,\dots, 0)=\frac{-2i^{-\frac{k-3}{2}}r^{\frac{k+1}{2}}}{\pi\sqrt{q}\sqrt{-\mathrm{Hess}V^+(x_0,y_0)}}e^{\frac{r}{4\pi}\big(\mathrm{Vol}(M_{K_\theta})+i\mathrm{CS}(M_{K_\theta})\big)}\Big(1+O\Big(\frac{1}{\sqrt r}\Big)\Big).
\end{split}
\end{equation*}

\item For $(n_1,\dots,n_{k-1},k_1,k_2)\neq (0,\dots,0),$ 
\begin{equation*}
\begin{split}
|\hat f^\pm_r(n_1,\dots, n_{k-1},k_1,k_2)|\leqslant O\Big(r^{\frac{k}{2}}\Big)e^{\frac{r}{4\pi}\mathrm{Vol}(M_{K_\theta})}.
\end{split}
\end{equation*}
\end{enumerate}
\end{proposition}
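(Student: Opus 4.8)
The plan is to assemble Proposition~\ref{4.1} entirely from the estimates of the previous sections, feeding Propositions~\ref{0}, \ref{leading} and \ref{other'} and Lemmas~\ref{converge}, \ref{k2}, \ref{k1}, \ref{D'}, \ref{D''} into one another, and then to deduce Theorem~\ref{main} from the Poisson summation of Proposition~\ref{Poisson}. \textbf{For part (1)}, I would start from Proposition~\ref{0}, which writes $\hat f^\pm_r(0,\dots,0)$ as the explicit prefactor $\frac{i^{-\frac{k-1}{2}}r^{\frac{k+3}{2}}}{4\pi^2\sqrt q}$ times $\int_{\mathcal D_{\frac{\delta}{2}}}\epsilon(x,y)e^{-xi+\frac{r}{4\pi i}V^\pm_r(x,y)}\,dxdy$, and split the domain as $\mathcal D_{\frac{\delta}{2}}=D_{\frac{\delta}{2}}\cup D'_{\frac{\delta}{2}}\cup D''_{\frac{\delta}{2}}$. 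On $D_{\frac{\delta}{2}}$, Proposition~\ref{leading} evaluates the integral as $\frac{4\pi}{r}\,e^{\frac{r}{4\pi}(\mathrm{Vol}(M_{K_\theta})+i\mathrm{CS}(M_{K_\theta}))}/\sqrt{-\mathrm{Hess}V^+(x_0,y_0)}$ up to a factor $1+O(1/r)$. On $D'_{\frac{\delta}{2}}$ and $D''_{\frac{\delta}{2}}$, I would run the same contour deformation as in Lemmas~\ref{D'} and \ref{D''}: there $\epsilon$ is at most $2$, $|e^{-xi}|$ stays bounded because $\mathrm{Im}(x)$ is not moved, and $V^\pm_r$ differs from $V^\pm$ by $O(1/r)$ on the relevant compact set by Lemma~\ref{converge}, so these two contributions are $O(e^{\frac{r}{4\pi}(\mathrm{Vol}(M_{K_\theta})-\epsilon)})$ and negligible against the $D_{\frac{\delta}{2}}$ piece. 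Substituting, cancelling the powers of $r$, adding the $+$ and $-$ terms, and using $\det\mathrm{Hess}V^+(x_0,y_0)=\det\mathrm{Hess}V^-(-x_0,y_0)$ from Proposition~\ref{Vol}(2), a routine simplification of the constants yields the stated formula.

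\textbf{For part (2)}, fix a nonzero tuple $(n_1,\dots,n_{k-1},k_1,k_2)$ and put $k_0=\sum_{j=1}^{k-1}(-1)^{k-j}n_jc_{j-1}$. By Lemma~\ref{converge} the asymptotics of $\hat F^\pm_r(k_0,k_1,k_2)$ agree with those of $\hat F^\pm(k_0,k_1,k_2)=\hat F^\pm_D(k_0,k_1,k_2)+\hat F^\pm_{D'}(k_0,k_1,k_2)+\hat F^\pm_{D''}(k_0,k_1,k_2)$, and Lemmas~\ref{D'} and \ref{D''} always bound the last two pieces by $O(e^{\frac{r}{4\pi}(\mathrm{Vol}(M_{K_\theta})-\epsilon)})$, so everything reduces to $\hat F^\pm_D$. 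If $k_2\neq0$ (Lemma~\ref{k2}), or if $k_2=0$ and $\frac{k_0}{q}+k_1\neq0$ (Lemma~\ref{k1}), then $\hat F^\pm_D$ is again $O(e^{\frac{r}{4\pi}(\mathrm{Vol}(M_{K_\theta})-\epsilon)})$, so Proposition~\ref{other'}(1) gives $|\hat f^\pm_r|\leqslant O(r^{\frac{k+3}{2}})\cdot O(e^{\frac{r}{4\pi}(\mathrm{Vol}(M_{K_\theta})-\epsilon)})$, which for large $r$ is well below $O(r^{\frac{k}{2}})e^{\frac{r}{4\pi}\mathrm{Vol}(M_{K_\theta})}$. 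The remaining ``resonant'' case is $k_2=0$ and $\frac{k_0}{q}+k_1=0$, i.e.\ $k_0=-qk_1$: then the linear twist in $V^{(k_0,k_1,0),\pm}$ vanishes, so $V^{(k_0,k_1,0),\pm}=V^\pm$ and $\hat F^\pm_r(k_0,k_1,0)=\hat F^\pm_r(0,0,0)$, which has modulus $O(\frac{1}{r}e^{\frac{r}{4\pi}\mathrm{Vol}(M_{K_\theta})})$ by the ingredients of part (1); moreover resonance forces either $k_1=0$ (so $k_0=0$) or $k_1\neq0$ (so $|k_0|=q|k_1|\geqslant q$), hence in both cases $k_0=0$ or $|k_0|\geqslant q$ and Proposition~\ref{other'}(2) improves the prefactor to $O(r^{\frac{k+2}{2}})$. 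Multiplying, $|\hat f^\pm_r|\leqslant O(r^{\frac{k+2}{2}})\cdot O(\frac{1}{r}e^{\frac{r}{4\pi}\mathrm{Vol}(M_{K_\theta})})=O(r^{\frac{k}{2}})e^{\frac{r}{4\pi}\mathrm{Vol}(M_{K_\theta})}$, as needed.

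Theorem~\ref{main} then follows by substituting both parts into Proposition~\ref{Poisson}: part (1) isolates the $(0,\dots,0)$ summand, whose size is a fixed power of $r$ times $e^{\frac{r}{4\pi}\mathrm{Vol}(M_{K_\theta})}$; part (2), together with the rapid decay of the Fourier coefficients of the Schwartz function $f^\pm_r$, shows that all the remaining summands total something smaller by a factor of order $r^{-\frac{1}{2}}$; and Proposition~\ref{bound} combined with the hypothesis $\mathrm{Vol}(M_{K_\theta})>\frac{1}{2}\mathrm{Vol}(S^3\setminus K_{4_1})$ makes the error term exponentially smaller still. Taking $\frac{4\pi}{r}\log$, absorbing the phase of $\kappa_r$ into $\sqrt{-1}\pi^2\mathbb Z$, and invoking Proposition~\ref{Vol}(1) gives the asserted limit.

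\textbf{The main obstacle} is the resonant case in part (2). One has to notice \emph{two} independent savings at once: that the defining potential collapses to $V^\pm$, so $\hat F^\pm_r$ is the leading integral and contributes a factor $r^{-1}$; and that resonance rules out the intermediate range $0<|k_0|<q$, so that the improved estimate of Proposition~\ref{other'}(2), with prefactor $r^{\frac{k+2}{2}}$ rather than $r^{\frac{k+3}{2}}$, is available. Only the combination brings the power of $r$ down to $\frac{k}{2}$, which is precisely what is needed so that the total non-leading contribution stays strictly below the leading term. A secondary technical point is to make the contour-pushing estimates of Lemmas~\ref{D'}, \ref{D''}, \ref{k2}, \ref{k1} uniform enough in $(n_1,\dots,n_{k-1},k_1,k_2)$ for the infinite sum in Proposition~\ref{Poisson} to converge and be controlled.
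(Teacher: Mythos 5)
Your proposal is correct and follows essentially the same route as the paper's own short proof: it assembles Proposition~\ref{4.1}(1) from Propositions~\ref{0} and \ref{leading} after discarding the $D'_{\frac{\delta}{2}}$ and $D''_{\frac{\delta}{2}}$ contributions via Lemmas~\ref{D'} and \ref{D''}, and handles (2) by the same case split on $\big(\tfrac{k_0}{q}+k_1,k_2\big)$, invoking Proposition~\ref{other'}(1) together with Lemmas~\ref{k2}, \ref{k1} in the non-resonant case and Proposition~\ref{other'}(2) plus the saddle-point factor $r^{-1}$ in the resonant case $k_2=0$, $\tfrac{k_0}{q}+k_1=0$ (which forces $k_0=0$ or $|k_0|\geqslant q$). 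Your identification of the two independent savings needed in the resonant case matches the paper's argument exactly, and you even spell out more explicitly why $|\widehat{F^\pm_r}(k_0,k_1,0)|=|\widehat{F^\pm_r}(0,0,0)|$ there.
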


\begin{proof} By Lemmas \ref{D'} and \ref{D''}, the contribution of $D'_{\frac{\delta}{2}}$ and $D''_{\frac{\delta}{2}}$ to $\hat f_r(n_1,\dots, n_{k-1}, k_1, k_2)$ is of order $O\big(e^{\frac{r}{4\pi}\big(\mathrm{Vol}(M_{K_\theta})-\epsilon\big)}\big),$ hence is neglectable.

Then (1) follows from Propositions \ref{0} and \ref{leading}. 
 
To see (2), for $(n_1,\dots,n_{k-1},k_1,k_2)\neq (0,\dots,0)$ with $\big(\frac{k_0}{q}+k_1,k_2\big)\neq (0,0),$ by Proposition \ref{other'} (1), Lemmas \ref{k2} and \ref{k1}, and Lemma \ref{converge}, we have that 
$$\hat f^\pm_r(n_1, n_2 ,\dots, n_{k-1}, k_1, k_2)=O\Big(e^{\frac{r}{4\pi}\big(\mathrm{Vol}(M_{K_\theta})-\epsilon\big)}\Big).$$

If $\big(\frac{k_0}{q}+k_1,k_2\big)= (0,0),$ in particular, if $\frac{k_0}{q}+k_1=0,$ then $|k_0|=|qk_1|$ is either $0$ or greater than or equal to  $q.$ Then by Propositions \ref{other'} (2) and \ref{leading}, and Lemma \ref{converge},
$$|\hat f^\pm_r( n_1,n_2,\dots, n_{k-1},k_1,k_2)|\leqslant O\Big(r^{\frac{k}{2}}\Big)e^{\frac{r}{4\pi}\mathrm{Vol}(M_{K_\theta})}.$$
\end{proof}

\begin{proof}[Proof of Theorem \ref{main}]
By Propositions \ref{Poisson} and \ref{4.1}, we have
\begin{equation*}
\begin{split}
&\lim_{r\to\infty}\frac{4\pi}{r} \log \mathrm{RT}_r(M_{K_\theta})\\
=&\lim_{r\to\infty}\frac{4\pi}{r}\bigg(\log \kappa_r +\log\bigg(\sum \hat f^\pm_r(n_1,\dots, n_{k-1},k_1,k_2)+O\Big(e^{\frac{r}{4\pi}\big(\frac{1}{2}\mathrm{Vol}(\mathrm{S}^3\setminus K_{4_1})+\epsilon\big)}\Big)\bigg)\bigg)\\
=&i\Big(3\sum_{i=1}^ka_i+\sigma(L)+2k-2\Big)\pi^2+\mathrm{Vol}(M_{K_\theta})+i\mathrm{CS}(M_{K_\theta})\\
=&\mathrm{Vol}(M_{K_\theta})+i\mathrm{CS}(M_{K_\theta})\quad\text{mod }i\pi^2\mathbb Z.
\end{split}
\end{equation*}
\end{proof}


\subsection{Cone angles satisfying the assumption of Theorem \ref{main}}

\begin{proposition}\label{small} 
\begin{enumerate}[(1)]
\item  If $(p,q)\neq(\pm 1, 0),$ $(0,\pm 1),$ $(\pm 1,\pm 1),$ $(\pm 2,\pm 1),$ $(\pm 3,\pm 1),$ $(\pm 4,\pm 1)$ and $(\pm 5,\pm 1),$ then for any cone angle  $\theta$ less than or equal to $2\pi,$
$$\mathrm{Vol}(M_{K_\theta})>\frac{\mathrm{Vol}(S^3\setminus K_{4_1})}{2}.$$
\item  If $(p,q)=(0,\pm 1),$ $(\pm 1,\pm 1),$ $(\pm 2,\pm 1),$ $(\pm 3,\pm 1),$ $(\pm 4,\pm 1)$ or $(\pm 5,\pm 1),$ then for any cone angle  $\theta$ less than or equal to $\pi,$
$$\mathrm{Vol}(M_{K_\theta})>\frac{\mathrm{Vol}(S^3\setminus K_{4_1})}{2}.$$
\end{enumerate}
\end{proposition}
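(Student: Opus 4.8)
The plan is to reduce the inequality, for each fixed $(p,q),$ to its most demanding cone angle by a Schl\"afli monotonicity argument, and then to dispose of that case by a Dehn-filling volume estimate together with a finite check. Concretely, for the family of hyperbolic cone structures $M_{K_\theta}$ deforming the complete structure on $S^3\setminus K_{4_1},$ the Schl\"afli formula for cone manifolds gives
$$\frac{d}{d\theta}\mathrm{Vol}(M_{K_\theta})=-\frac{1}{2}\,\ell(\theta),$$
where $\ell(\theta)>0$ is the length of the core geodesic $K;$ hence $\mathrm{Vol}(M_{K_\theta})$ is strictly decreasing in $\theta,$ and as $\theta\to 0^+$ the core collapses and $\mathrm{Vol}(M_{K_\theta})\nearrow\mathrm{Vol}(S^3\setminus K_{4_1})$ (the relevant solution of (\ref{HG}) tends to the complete one, because the cusp shape of $S^3\setminus K_{4_1}$ is $\tau=2\sqrt3\,i,$ so $p\mathrm{H}(m)+q\mathrm{H}(l)$ cannot vanish near the complete structure unless $\mathrm{H}(m)$ does). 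Since the cone structure used in Theorem \ref{main} lies on this deformation component, for every admissible $\theta$
$$\mathrm{Vol}(M_{K_\theta})=\mathrm{Vol}(S^3\setminus K_{4_1})-\frac{1}{2}\int_0^\theta\ell(t)\,dt\ >\ \mathrm{Vol}\big(M_{K_{\theta_{\max}}}\big),$$
so it suffices to verify the inequality at $\theta_{\max}=2\pi$ for part (1) and at $\theta_{\max}=\pi$ for part (2). (That the family is connected down to $\theta=0$ is immediate from Kojima's global rigidity when $\theta\le\pi;$ for $\pi<\theta\le 2\pi$ one uses Hodgson--Kerckhoff local rigidity together with the explicit shape of Thurston's Dehn surgery space for $S^3\setminus K_{4_1},$ from which no degeneration occurs as long as $(p,q)$ avoids the exceptional slopes.)

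For the endpoint $\theta=2\pi$ in part (1), $M_{K_{2\pi}}=S^3_{p/q}(K_{4_1})$ is a closed hyperbolic $3$-manifold; from $\tau=2\sqrt3\,i$ the normalized length $\hat L(p/q)$ of the slope $p/q$ on the maximal cusp of $S^3\setminus K_{4_1}$ is a fixed constant times $\sqrt{p^2+12q^2},$ hence grows like $\sqrt{p^2+q^2}.$ By the Dehn-filling volume inequality of Futer--Kalfagianni--Purcell (refining Neumann--Zagier),
$$\mathrm{Vol}\big(S^3_{p/q}(K_{4_1})\big)\ \geq\ \Big(1-\big(2\pi/\hat L(p/q)\big)^2\Big)^{3/2}\mathrm{Vol}(S^3\setminus K_{4_1}),$$
and the right-hand side exceeds $\tfrac12\mathrm{Vol}(S^3\setminus K_{4_1})$ once $\hat L(p/q)>2\pi\big(1-2^{-2/3}\big)^{-1/2};$ thus all but finitely many slopes are settled at once, and the remaining ones---whose number can be cut down using monotonicity of the filling volume in the slope length---are treated by computing $\mathrm{Vol}\big(S^3_{p/q}(K_{4_1})\big)$ directly via the Neumann--Zagier potential, equivalently as $\mathrm{Im}\,V^\pm(x_0,y_0)$ from Proposition \ref{Vol}. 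This pins the failures down to exactly the listed slopes: $p/q\in\{0,\pm1,\pm2,\pm3,\pm4\}$ and $(p,q)=(\pm1,0)$ give non-hyperbolic manifolds, and $\mathrm{Vol}\big(S^3_{\pm5}(K_{4_1})\big)\approx 0.981<\tfrac12\mathrm{Vol}(S^3\setminus K_{4_1})\approx 1.015,$ while every other slope satisfies the strict inequality.

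For the endpoint $\theta=\pi$ in part (2), there remain only the slopes $p/q\in\{0,\pm1,\dots,\pm5\},$ and $M_{K_\pi}$ is the hyperbolic orbifold obtained by cone-$\pi$ filling $S^3\setminus K_{4_1}$ along $p/q;$ in these finitely many cases one checks directly (again through the Neumann--Zagier potential or $\mathrm{Im}\,V^\pm$) that $\mathrm{Vol}(M_{K_\pi})>3\Lambda(\pi/3)=\tfrac12\mathrm{Vol}(S^3\setminus K_{4_1}),$ after which the monotonicity step yields the inequality for all $\theta\le\pi.$ The principal difficulty of the argument is concentrated at these endpoints: producing the finitely many volume values with enough precision to single out exactly the stated sporadic list, and justifying the monotonicity step in the range $\pi<\theta\le 2\pi,$ i.e., that the geometric cone structure of Theorem \ref{main} stays on the component of the complete structure in the deformation space---which for the figure-$8$ knot one verifies by inspecting its Dehn surgery space directly.
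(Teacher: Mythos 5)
Your part (1) follows essentially the same route as the paper: Schl\"afli monotonicity of $\mathrm{Vol}(M_{K_\theta})$ in $\theta,$ reduction to the endpoint $\theta=2\pi,$ and then a Dehn-filling volume bound plus a finite check for the closed manifolds $S^3_{p/q}(K_{4_1}).$ The paper outsources the endpoint step to its companion paper (\cite[Proposition 7.2]{WY}) and cites \cite[Corollary 5.4]{H} for the monotonicity, whereas you spell out a Futer--Kalfagianni--Purcell--type estimate via the normalized slope length $\asymp\sqrt{p^2+12q^2};$ either way the content is the same.

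For part (2), however, the paper does something genuinely cleaner than what you propose, and the difference matters. You reduce to verifying $\mathrm{Vol}(M_{K_\pi})>\tfrac12\mathrm{Vol}(S^3\setminus K_{4_1})$ for the finitely many sporadic slopes by direct numerical evaluation of the Neumann--Zagier potential (or $\mathrm{Im}\,V^\pm$) at the orbifold point. The paper instead shows that $\mathrm{Vol}(M_{K_\theta})$ is \emph{strictly concave} in $\theta$ on $(0,2\pi)$: by Schl\"afli, $\tfrac{d^2}{d\theta^2}\mathrm{Vol}(M_{K_\theta})=-\tfrac{dL_\theta}{d\theta},$ and they check $\tfrac{dL_\theta}{d\theta}>0$ using Neumann--Zagier's formula (68) specialized to the figure-eight knot. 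With concavity in hand, the midpoint value dominates the average of the endpoints,
\[
\mathrm{Vol}(M_{K_\pi})>\frac{\mathrm{Vol}(M_{K_{0}})+\mathrm{Vol}(M_{K_{2\pi}})}{2}\geqslant\frac{\mathrm{Vol}(S^3\setminus K_{4_1})}{2},
\]
using only $\mathrm{Vol}(M_{K_0})=\mathrm{Vol}(S^3\setminus K_{4_1})$ and $\mathrm{Vol}(M_{K_{2\pi}})\geqslant 0.$ This completely avoids any numerical computation of orbifold volumes and handles all ten sporadic slopes simultaneously, including the exceptional ones where $M_{K_{2\pi}}$ degenerates. Your route would also work if the computations are carried out rigorously, but it trades a two-line concavity argument for a finite but nontrivial verification; it's worth noticing that the information you already invoke (Schl\"afli plus the explicit Neumann--Zagier derivative for the figure-eight) is precisely what yields the concavity that makes the numerics unnecessary.

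One small caution on your part (1): the statement that all but finitely many slopes are handled by the $\big(1-(2\pi/\hat L)^2\big)^{3/2}$ bound is correct, but the ``monotonicity of the filling volume in the slope length'' that you invoke to cut down the remaining check is not a theorem in this generality; in practice one simply enumerates the finitely many $(p,q)$ with $\hat L(p/q)$ below threshold and tabulates the volumes, which is exactly what \cite[Proposition 7.2]{WY} does.
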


\begin{proof} By \cite[Proposition 7.2]{WY}, (1) holds for $\theta=2\pi.$ Then by \cite[Corollary 5.4]{H} that $\mathrm{Vol}(M_{K_\theta})$ is decreasing in $\theta,$ (1) holds for all $\theta$ less than $2\pi.$ 

For (2),  by \cite[Chapter 6]{H} have that for $(p,q)=(0,\pm 1),$ $(\pm 1,\pm 1),$ $(\pm 2,\pm 1),$ $(\pm 3,\pm 1),$ $(\pm 4,\pm 1)$ or $(\pm 5,\pm 1)$ and for any $\theta$ less than $2\pi$  there exists a unique hyperbolic cone metric on $M$ with singular locus $K$ with cone angle $\theta,$ with $\mathrm{Vol}(M_{K_0})=\mathrm{Vol}(S^3\setminus K_{4_1})$ and $\mathrm{Vol}(M_{K_{2\pi}})=\mathrm{Vol}(M)\geqslant 0.$ Let $L_\theta$ be the length of $K$ in $M_{K_\theta}.$ Then by the Schl\"afli formula \cite[Theorem 5.2]{H}, 
$$\frac{d^2\mathrm{Vol}(M_{K_\theta})}{d\theta^2}=-\frac{dL_\theta}{d\theta}.$$ Let $m$ and $l$ respectively be the meridian and longitude of  the boundary of the figure-8 complement as in Section \ref{geometry}, then $m$ is isotopic to $K$ in $M,$ and 
\begin{equation*}
\left\{\begin{array}{c}
p\mathrm H(m)+\mathrm H(l)=\theta i\\
\\
\mathrm H(m) = L_\theta.
\end{array}\right.
\end{equation*}
As a consequence, we have 
$$p+\frac{d\mathrm H(l)}{d L_\theta}=i\frac{d\theta}{d L_\theta},$$
and hence 
$$\frac{d L_\theta}{d\theta}=\bigg(\mathrm{Im} \frac{d\mathrm H(l)}{d\mathrm H(m)}\bigg)^{-1}.$$
By \cite[Formula (68)]{NZ}, we have
$$\frac{d\mathrm H(l)}{d\mathrm H(m)}=2\frac{1-2e^{L_\theta}-2e^{-L_\theta}}{\sqrt{e^{2L_\theta}+e^{-2L_\theta}-2e^{L_\theta}-2e^{-L_\theta}+1}},$$
which implies that
$\mathrm{Im} \frac{d\mathrm H(l)}{d\mathrm H(m)}>0,$ and hence $\frac{d L_\theta}{d\theta}>0$ and $\frac{d^2\mathrm{Vol}(M_{K_\theta})}{d\theta^2}=-\frac{dL_\theta}{d\theta}<0.$
As a consequence, $\mathrm{Vol}(M_{K_\theta})$ is strictly concave down in $\theta,$ and 
$$\mathrm{Vol}(M_{K_\pi})>\frac{\mathrm{Vol}(M_{K_0})+\mathrm{Vol}(M_{K_{2\pi}})}{2}\geqslant \frac{\mathrm{Vol}(S^3\setminus K_{4_1})}{2}.$$
Then by the monotonicity, 
$$\mathrm{Vol}(M_{K_\theta})>\frac{\mathrm{Vol}(S^3\setminus K_{4_1})}{2}$$
for all cone angle  $\theta$ less than or equal to $\pi.$
\end{proof}

\begin{remark} We note that the upper bound $\pi$ in (2) of Proposition \ref{small} is not sharp. 
\end{remark}

\noindent
Ka Ho Wong\\
Department of Mathematics\\  Texas A\&M University\\
College Station, TX 77843, USA\\
(daydreamkaho@math.tamu.edu)
\\

\noindent
Tian Yang\\
Department of Mathematics\\  Texas A\&M University\\
College Station, TX 77843, USA\\
(tianyang@math.tamu.edu)

\end{document}